\numberwithin{equation}{section}
\newtheorem{maintheorem}{Theorem}
\newtheorem{maincoro}[maintheorem]{Corollary}
\newtheorem{theorem}{Theorem}[section]
\newtheorem*{theorem*}{Theorem}
\newtheorem{lemma}[theorem]{Lemma}
\newtheorem{proposition}[theorem]{Proposition}
\newtheorem{corollary}[theorem]{Corollary}
\theoremstyle{definition}{

\newtheorem*{definition*}{Definition}

\newtheorem*{question*}{Question}
\newtheorem*{example*}{Example}
\newtheorem*{examples*}{Examples}
\newtheorem{remark}[theorem]{Remark}
\newtheorem*{remark*}{Remark}

}
\newcommand{\cF}{{\mathcal{F}}}
\newcommand{\cN}{{\mathcal{N}}}
\newcommand{\cX}{\mathcal X}
\newcommand{\Z}{\mathbb{Z}}
\newcommand{\R}{\mathbb{R}}
\newcommand{\N}{\mathbb{N}}
\renewcommand{\P}{\mathbb{P}}
\newcommand{\E}{\mathbb E}
\newcommand{\bP}{\mathbf P}
\newcommand{\bE}{\mathbf E}
\newcommand{\one}{\mathbbm{1}}
\newcommand{\supp}{\operatorname{supp}}
\renewcommand{\epsilon}{\varepsilon}
\author[E. Lubetzky]{Eyal Lubetzky}
\address{Eyal Lubetzky\hfill\break
Courant Institute %of Mathematical Sciences
\\ New York University\\
251 Mercer Street\\ New York, NY 10012, USA.}
\email{eyal@courant.nyu.edu}
\author[C. Thornett]{Chris Thornett}
\address{Chris Thornett\hfill\break
Courant Institute %of Mathematical Sciences
\\ New York University\\
251 Mercer Street\\ New York, NY 10012, USA.}
\email{thornett@cims.nyu.edu}
\author[O. Zeitouni]{Ofer Zeitouni}
\address{Ofer Zeitouni\hfill\break
Department of Mathematics\\
Weizmann Institute of Science\\
POB 26, 
Rehovot 76100, Israel\\
and
Courant Institute\\
New York University}
\email{ofer.zeitouni@weizmann.ac.il}
\title[Maximum of BBM in a periodic environment]
{Maximum of Branching Brownian motion\\ in a periodic environment}
\begin{document}

\begin{abstract}
We study the maximum of Branching Brownian motion (BBM) with branching rates that vary in space, via  a periodic function of a particle's location. This corresponds to a variant of the F-KPP equation in a periodic medium, extensively studied in the last 15 years,
admitting pulsating fronts as solutions. Recent progress on this PDE due to Hamel, Nolen, Roquejoffre and Ryzhik~('16) implies tightness for the centered maximum of BBM in a periodic environment. Here we establish the convergence in distribution of specific subsequences of this centered maximum, and identify the limiting distribution.
Consequently, we find the asymptotic shift between the solution to the corresponding F-KPP equation with Heavyside initial data and the pulsating wave, thereby answering a question of Hamel~et~al. Analogous results are given for the cases where the Brownian motion is replaced by an Ito diffusion with periodic coefficients, as well as for nearest-neighbor branching random walks.
\end{abstract}

\maketitle

\section{Introduction}

In classical Branching Brownian motion (BBM), initially there is a single particle at the origin, performing standard Brownian motion; a particle is associated with a rate-1 exponential clock which, upon ringing, causes it to be replaced by two new particles at that location, each evolving thereafter independently in the above manner. The location of the rightmost particle in this process after time $t$, denoted $M_t$, has been extensively studied, due in part to its connection to the behavior of extreme values in the Discrete Gaussian Free Field (and other log-correlated fields; see for instance~\cite{Bovier,Zeitouni}), and to the F-KPP equation, proposed almost a century ago by Fisher~\cite{fisher} and by Kolmogorov, Petrovskii and Piskunov~\cite{KPP} to model the spread of an advantageous gene in a population:
\begin{equation}\label{eq:kpp}	\frac{\partial u}{\partial t} = \frac12 \frac{\partial^2 u}{\partial x^2} + F(u) \end{equation}
	with 
\begin{equation}\label{eq:F-dyadic-bbm} F(u) = u^2-u \,,\quad u(0,x)=\one_{\{x\geq 0\}}\,.\end{equation}
As found by McKean~\cite{McKean}, BBM gives a probabilistic representation to~\eqref{eq:kpp} with  initial conditions $u(0,x)=f(x)$ via $u(t,x) = \E\left[ \prod_{v\in \cN_t} f(x+X_t^{(v)})\right]$, where $\cN_t$ is the set of particles at time $t$, and $X_t^{(v)}$ is the location of the particle $v$ at that time. With this interpretation,
$u(t,x)=\P(\min_{v\in \cN_t}(x+X_t^{(v)})>0)=\P(M_t\leq x)$ solves \eqref{eq:kpp}, \eqref{eq:F-dyadic-bbm}.

Bramson~\cite{Bramson78,Bramson83} was then able to use probabilistic methods---which later had a large impact in the study of extremes of logarithmically 
correlated fields---in a sharp analysis of the maximum of BBM: the median of $M_t$ is $m_t = \sqrt 2 t - \frac3{2\sqrt 2}\log t + c + o(1)$ (with its 
logarithmic ``Bramson correction'' term
 differing from the second order term $\frac1{2\sqrt{2}}\log t$  in the maximum of $e^t$ i.i.d.\ Brownian motions), and $M_t-m_t$ converges in law to a random variable $W$, later identified by Lalley and Selke~\cite{LS87} to be a randomly
 shifted Gumbel random variable.

A version of the F-KPP equation studied by H.~Berestycki, H.~Hamel~\cite{BH02}, followed by a series of papers (cf.~\cite{BHR04,BHN05,HR11} to name a few), replaced the function $F(u)$ in~\eqref{eq:F-dyadic-bbm} by a function $F(x,u)$ that is periodic in $x$ and is a KPP-type nonlinearity.
The case 
\begin{equation}\label{eq:F-periodic}
F(x,u) = g(x) (u^2 - u)\quad\mbox{for $g\in C^1(\R)$ strictly positive and 1-periodic}	
\end{equation}
corresponds to BBM in a periodic environment, where the constant branching rate is replaced by a space-dependent rate prescribed by the function $g$. That is, if 
 $b_v$ and $d_v$ are the birth time and death times of the particle $v$, respectively, and we fictitiously extend $X_s^{(v)}$ beyond its death time $d_v$, then 
\begin{equation}
	\label{eq:g-branch-rates}
\P\left(d_v-b_v>t\;\Big|\; b_v, \, \{X_s^{(v)}: b_v\leq s < \infty\} \right)=\exp\left[-\int_{b_v}^{b_v + t} g\big(X_s^{(v)}\big)\,ds\right]\,. 
\end{equation}
Upon dying, the particle then gives birth to two identical particles at its position who then continue independently under Brownian motion.

A recent breakthrough in analyzing the solution to this flavor of the F-KPP equation (and more generally, allowing $F(x,u)=g(x)f(u)$ for any $f\in C^1([0,1])$ of KPP-type) due to Hamel, Nolen, Roquejoffre and Ryzhik~\cite{HNRR} generalized Bramson's results to the case of periodic environments. In particular, for the maximum $M_t$ of BBM with branching rates  given by $g$ as per~\eqref{eq:F-periodic}, the results of~\cite{HNRR} imply that its median is within order $1$ of
\begin{equation}\label{eq:mt} m_t = v^*t-\frac3{2\lambda^*}\log t + O(1)\quad\mbox{ for explicit $v^*(g),\lambda^*(g)>0$}\,,\end{equation} 
and that $M_t-m_t$ is tight. 

Before we state our main result, let us briefly define $v^*$ and $\lambda^*$. As discussed in \cite{HNRR},
the key is a class of eigenproblems. For every $\lambda \in \R$, let $\gamma(\lambda)$ and $\psi(\cdot,\lambda)$ be the principal eigenvalue and positive eigenfunction of the periodic problem
\begin{equation}\label{eigen}
\begin{array}{rcl}
\displaystyle\frac12\psi_{xx}+\lambda\psi_x+\left(\frac12\lambda^2+ g(x)\right)\psi &=& \displaystyle \gamma(\lambda)\psi\,,\\[4pt]
\displaystyle\psi(x+1,\lambda) &=& \displaystyle\psi(x,\lambda)\,,
\end{array}\end{equation}
normalized so that $\int_0^1\psi(x,\lambda)dx=1$. We define 
\begin{equation}\label{eq:lambda_lambda*}
 v^*:=\min_{\lambda>0}\frac{\gamma(\lambda)}{\lambda}\,,\qquad \lambda^* = \operatornamewithlimits{arg\,min}_{\lambda>0} \frac{\gamma(\lambda)}\lambda\,.	
\end{equation}
The existence of $\lambda^*$ will be proved in Section \ref{sec:prelims_ldp}.

Our main result establishes that, in this setting, $\{M_t-m_t\}$ is tight, and we identify sequences $\{t_n\}$ along which $\{M_{t_n}-m_{t_n}\}$ converge in distribution, along with the limits.
 
\begin{maintheorem}\label{mainthm:law}
Let $M_t$ be the maximum at time $t$ of BBM with branching rates given 
by a $C^1(\R)$ strictly
positive and 1-periodic function $g$ as 
in~\eqref{eq:g-branch-rates}, and let $m_t=v^* t -\frac3{2\lambda^*}\log t$. Then there exist 
a random variable $\Theta$ and a positive, continuous, $1$-periodic function $\nu$ such that
\[
\lim_{t\to\infty}\left|\P\Big(M_t\le m_t+y\Big)-\E\left[\exp\big(-\Theta \nu(m_t+y)e^{-\lambda^* y}\big)\right]\right|=0
\]
for all $y\in\R$. In particular, if $\{t_n\}$ is an increasing sequence such that $(m_{t_n})$ has constant fractional part, then $M_{t_n}-m_{t_n}$ converges in distribution.
\end{maintheorem}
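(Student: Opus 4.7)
The plan is to mirror the proof of Lalley--Selke \cite{LS87} for the homogeneous BBM maximum, with every occurrence of $e^{-\sqrt 2 x}$ replaced by $\psi(x,\lambda^*)e^{\lambda^* x}$ from \eqref{eigen}, and the role of the classical derivative martingale played by an analogous object at criticality $\lambda=\lambda^*$.

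\emph{Step 1 (additive and derivative martingales).} Applying It\^o's formula to $\psi(x,\lambda)e^{\lambda x}$ and using \eqref{eigen}, one checks that, for every $\lambda>0$,
$$W_t(\lambda):=\sum_{v\in\cN_t}\psi\big(X_t^{(v)},\lambda\big)\exp\!\big(\lambda X_t^{(v)}-\gamma(\lambda)t\big)$$
is a nonnegative martingale. Since $\lambda^*$ minimises $\gamma(\lambda)/\lambda$ we have $\gamma'(\lambda^*)=v^*$; differentiating in $\lambda$ at $\lambda^*$ and flipping the sign produces the critical \emph{derivative martingale}
$$D_t:=\sum_{v\in\cN_t}\psi\big(X_t^{(v)},\lambda^*\big)\big(v^*t-X_t^{(v)}\big)\,e^{\lambda^*(X_t^{(v)}-v^*t)}-\sum_{v\in\cN_t}\psi_\lambda\big(X_t^{(v)},\lambda^*\big)\,e^{\lambda^*(X_t^{(v)}-v^*t)}.$$
The first subgoal is to prove $W_t(\lambda^*)\to 0$ and $D_t\to\Theta$ almost surely with $\Theta>0$. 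The homogeneous analogue is due to Lalley--Selke, and the proof here follows the same spine/$L\log L$-truncation strategy, except that all moment computations are carried out under the Girsanov tilt associated to $\psi(x,\lambda^*)e^{\lambda^* x}$: under that tilt the spine particle becomes a drifted diffusion whose projection onto $\R/\Z$ is ergodic with invariant density proportional to $\psi(\cdot,\lambda^*)\psi^\dagger(\cdot,\lambda^*)$, where $\psi^\dagger$ is the principal eigenfunction of the adjoint of \eqref{eigen}. Ergodicity on $\R/\Z$ is what keeps the second-moment bounds uniform.

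\emph{Step 2 (tail of $M_t$ via ballot and Poissonisation).} Fix $r\ll t$, condition on $\cF_r$, and use the branching property so that descendants of particles alive at time $r$ evolve independently. One then has
$$\P\big(M_t\le m_t+y\,\big|\,\cF_r\big)=\prod_{v\in\cN_r}\big(1-p_v(t,y)\big),$$
where $p_v(t,y)$ is the probability that a BBM started at time $r$ from $X_r^{(v)}$ produces an offspring above level $m_t+y$ by time $t$. A many-to-one argument combined with a Bramson-type ballot estimate for the tilted diffusion should give, uniformly in $v$,
$$p_v(t,y)\sim C\,\frac{v^*t-X_r^{(v)}}{\psi\big(X_r^{(v)},\lambda^*\big)}\,\nu(m_t+y)\,e^{-\lambda^*(m_t+y-X_r^{(v)})},$$
where the positive, continuous, $1$-periodic function $\nu$ arises as the endpoint density on $\R/\Z$ of the tilted diffusion conditioned to stay below the linear barrier $v^*s-\tfrac{3}{2\lambda^*}\log t$. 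Summing $-\log(1-p_v)\approx p_v$ over $v\in\cN_r$ recognises the leading part of $D_r$, and taking $t\to\infty$ first and then $r\to\infty$ yields the stated limit with $\Theta$ a constant multiple of $D_\infty$. The subsequence statement is then immediate from the $1$-periodicity of $\nu$: when $\{m_{t_n}\}$ is constant, $\nu(m_{t_n}+y)$ is constant in $n$.

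\emph{Step 3 (the main obstacle).} The central technical difficulty is the uniform ballot estimate in a periodic environment: one must obtain sharp asymptotics, uniform both in the starting phase $x\bmod 1$ and in the target phase $(m_t+y)\bmod 1$, for the probability that the tilted diffusion stays below the curved barrier throughout $[0,t]$ and lands in a prescribed window on the torus. It is precisely this ballot-with-target asymptotic that produces $\nu$ as a nontrivial periodic function and that forces the fractional part of $m_t$ to enter the final formula. I expect this step to require uniform Gaussian-type local limit estimates for the tilted diffusion, combined with the ergodicity from Step 1; every remaining ingredient (martingale $L^2$ bounds, the Poissonisation step, extraction of subsequences) is a routine adaptation of the homogeneous theory once the ballot estimate is in place.
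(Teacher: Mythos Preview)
Your overall architecture---condition at time $r$, apply the branching property, Taylor-expand $\log(1-p_v)$ to recognise a derivative-type quantity---matches the paper's Section~\ref{sec:mainresults}. The two technical ingredients are handled differently, however. First, the paper does \emph{not} prove convergence of the derivative martingale via a spine/$L\log L$ argument; rather, it deduces convergence in distribution of $\Theta_s\one_{A_s}$ (which is the leading part of your $D_s$, up to a term controlled by the additive martingale $\Gamma_s$) \emph{as a byproduct} of the tail estimate: once $\P(M_t\le m_t+y\mid\cF_s)$ is shown to depend on $\cF_s$ asymptotically only through $\Theta_s$, convergence of the Laplace transform of $\Theta_s\one_{A_s}$ is forced. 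Your Step~1 would buy almost-sure convergence and positivity of $\Theta$, which the paper neither obtains nor needs. Second, and more importantly, the paper does not attack the tail estimate (your Step~3) via ballot/local-limit estimates for the tilted diffusion directly. Instead it passes to the stopping line at the hitting times $T_k^{(v)}$ of integer levels $k$: under the tilt $\bP_{\lambda^*}$ the increments $T_k-T_{k-1}$ are i.i.d.\ with exponential moments, so the barrier problem reduces to a classical ballot estimate for a centred random walk (Lemma~\ref{ballot:Sk}, adapted from \cite{BDZ}). Your direct-diffusion route is mentioned by the paper as a viable alternative in Section~\ref{sec:extensions} (citing \cite{GLL}) but is not pursued; the hitting-time trick is precisely what lets the paper avoid developing uniform local limit theorems for diffusions in periodic media.

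One concrete error to fix: your displayed formula for $p_v(t,y)$ is wrong. The correct asymptotic from Proposition~\ref{righttail}, applied to a subtree rooted at $(r,X_r^{(v)})$, is
\[
p_v(t,y)\;\sim\;\psi\big(X_r^{(v)},\lambda^*\big)\,\nu(m_t+y)\,\big(y+H_r^{(v)}\big)\,e^{-\lambda^*(y+H_r^{(v)})},\qquad H_r^{(v)}=v^*r-X_r^{(v)},
\]
with $\psi$ in the numerator (not the denominator), linear factor $y+H_r^{(v)}$ (not $v^*t-X_r^{(v)}$), and exponent $-\lambda^*(y+H_r^{(v)})$ (not $-\lambda^*(m_t+y-X_r^{(v)})$). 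Your version differs from the correct one by a factor of order $e^{-\lambda^* v^*(t-r)}$, so summing it over $v\in\cN_r$ cannot recover $D_r$ as you claim.
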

\noindent  The random variable $\Theta$ can be described as an appropriate limit, see \eqref{eq-thetadef} below.
The implicitly defined
function  $\nu(\cdot)$, whose existence
is a consequence of Proposition \ref{righttail}, captures the possible
variability of the tail of the law of $M_t$ as function of the initial condition. We do not rule out the possibility that $\nu(\cdot)$ is actually a constant.

Our analysis has the following consequence for the behavior of the solution to the F-KPP equation in a periodic medium:
\begin{maincoro}\label{maincor:pulsating}
	Let $u(t,x)$ be the solution to the F-KPP equation
	\begin{equation}\label{eq:kpp-periodic}
	\begin{array}{rcl}
	\displaystyle \frac{\partial u}{\partial t} &=& \displaystyle\frac12\frac{\partial^2u}{\partial x^2} + g(x)(u^2-u) \,,\quad \\[8pt]
	\displaystyle u(0,x) &=& \displaystyle \one_{\{x\geq 0\}} \,,\quad
	\end{array}
	\end{equation}
	and let $m_t=v^* t -\frac3{2\lambda^*}\log t$. There exists a function $U(z,x)$, satisfying $U(z+1/v^*,x)=U(z,x-1)$, such that
	\[ \lim_{t\to\infty} \sup_{x\in\R} \left| u(t,x) - U\big(m_t/v^*,x\big) \right| = 0\,.\]
\end{maincoro}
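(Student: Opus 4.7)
The plan is to express $u(t,x)$ as the distribution function of a BBM maximum to which Theorem~\ref{mainthm:law} applies, then read off the limit and verify the pulsating identity.

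First, by the McKean/Feynman-Kac representation of the periodic F-KPP equation (in which the PDE spatial variable corresponds to the starting position of the BBM, not a shift of a $0$-started process as in the classical constant-rate case), the Heaviside initial data gives
\[ u(t,x)=\P\bigl(\min_v \bar X_t^{(v,x)}\ge 0\bigr), \]
where $\bar X_t^{(v,x)}$ are the positions of a BBM with rates $g$ starting from a single particle at $x$. Shifting the origin to $x$ (so $\bar X_t^{(v,x)}-x$ is a BBM from $0$ with rate function $y\mapsto g(x+y)$) and reflecting the spatial variable, one obtains $u(t,x)=\P(M_t^{h_x}\le x)$, where $M_t^{h_x}$ is the rightmost particle of a BBM from $0$ with rate $h_x(y):=g(x-y)$. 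The function $h_x$ is strictly positive, $C^1$, and $1$-periodic in $y$, and the $1$-periodicity of $g$ yields $h_{x+1}=h_x$, so that this family is indexed by the fractional part $\{x\}\in[0,1)$.

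Next, I would apply Theorem~\ref{mainthm:law} to the rate $h_x$ in place of $g$. The crucial input is that $v^*$ and $\lambda^*$ are unchanged under this substitution: integrating by parts on $[0,1]$ shows that the adjoint of the operator in \eqref{eigen} is that same operator with $\lambda$ replaced by $-\lambda$, whence $\gamma$ is an even function of $\lambda$, and in particular $\gamma_{h_x}(\lambda)=\gamma_g(-\lambda)=\gamma_g(\lambda)$. The centering $m_t$ therefore coincides for both rates, and setting $y=x-m_t$ in the conclusion of Theorem~\ref{mainthm:law} applied to $h_x$ gives
\[ u(t,x)=\E\Bigl[\exp\bigl(-\Theta^{h_x}\nu^{h_x}(x)e^{-\lambda^*(x-m_t)}\bigr)\Bigr]+o(1), \]
which suggests the definition
\[ U(z,x):=\E\Bigl[\exp\bigl(-\Theta^{h_x}\nu^{h_x}(x)e^{-\lambda^*(x-v^*z)}\bigr)\Bigr], \]
so that $U(m_t/v^*,x)$ matches the asymptotic. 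The pulsating relation $U(z+1/v^*,x)=U(z,x-1)$ follows by rewriting $\lambda^*(x-v^*(z+1/v^*))=\lambda^*((x-1)-v^*z)$ and using the identities $h_{x-1}=h_x$ (whence $\Theta^{h_{x-1}}=\Theta^{h_x}$ and $\nu^{h_{x-1}}=\nu^{h_x}$) and $\nu^{h_x}(x-1)=\nu^{h_x}(x)$ (from the $1$-periodicity of $\nu^{h_x}$).

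Finally, I would upgrade to the uniform-in-$x$ convergence required by the statement. For $|x-m_t|\ge L$, both $u(t,x)$ and $U(m_t/v^*,x)$ can be made uniformly close to $0$ or $1$ by choosing $L$ large, using the left and right tail estimates underlying Theorem~\ref{mainthm:law}, valid uniformly in the shift $\{x\}\in[0,1)$. On the compact window $|x-m_t|\le L$, the $x$-dependence of the limit enters only through the scalar $e^{-\lambda^*(x-m_t)}$ and through $\{x\}\in[0,1)$, so continuity of the assignment $\{x\}\mapsto(\Theta^{h_x},\nu^{h_x})$ together with compactness of $[0,1)$ yields the desired uniformity. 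The main obstacle is precisely this last point: one must verify that the random variable $\Theta$, the periodic function $\nu$, and the $o(1)$ error in Theorem~\ref{mainthm:law} depend continuously on the rate function as it varies through the compact family $\{h_x\}_{x\in[0,1)}\subset C^1(\R/\Z)$, which amounts to tracking the dependence on the rate in the Bramson-type estimates used to prove Theorem~\ref{mainthm:law}.
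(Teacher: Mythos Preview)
Your approach is correct in spirit and very close to the paper's, but the framing you chose creates an obstacle that the paper's framing avoids.

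The paper, like you, represents $u(t,x)$ via McKean and reflects. But instead of viewing the result as a BBM started at $0$ with the varying rate $h_x(y)=g(x-y)$, it keeps a single fixed rate $\bar g(y)=g(-y)$ and varies the \emph{starting point}. These two viewpoints are of course equivalent (your family $\{h_x\}_{x\in[0,1)}$ is exactly the family of shifts of $\bar g$), but the second one plugs directly into the machinery already built: Proposition~\ref{righttail} is stated with $\sup_{x\in[0,1)}$ over starting points, and in the proof of Theorem~\ref{mainthm:law} the paper actually establishes the stronger statement~\eqref{eq:general},
\[
\lim_{t\to\infty}\sup_{x,y\in\R}\Bigl|\P^x\bigl(M_t-x\le m_t+y\bigr)-\E^x\bigl[\exp\{-Z_\infty\,\nu(m_t+x+y)\,e^{-\lambda^* y}\}\bigr]\Bigr|=0,
\]
which is uniform in both the starting point $x$ and in $y$. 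Applying this single statement to the reflected process with rate $\bar g$ and substituting $y=x-m_t$ gives the Corollary in one line, with the uniformity in $x$ already included; no continuity of $(\Theta,\nu,\text{error})$ in the rate function needs to be checked separately.

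So the ``main obstacle'' you correctly flag---tracking dependence on the rate through the compact family $\{h_x\}$---is not a genuine gap so much as an artifact of applying Theorem~\ref{mainthm:law} only in its stated $x=0$ form. The fix is to use the uniform-in-starting-point version~\eqref{eq:general}, which is exactly what the paper does. Your verification of the pulsating identity and of $\gamma_{h_x}=\gamma_g$ via the adjoint is fine and matches the paper's reasoning.
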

\noindent 
The function $U(\cdot,\cdot)$ is given explicitly in
\eqref{eq-Udef}.

Recall that, in the context of the F-KPP equation in a periodic medium such as the one described in the above corollary, a \emph{pulsating wave} is a solution $U_v(t,x)$ to this equation which satisfies $U(t+1/v,x) = U(t,x-1)$. 
It is known that no such solution exists for $v<v^*$
, whereas if $v\geq v^*$ then such a solution exists and is unique up to time-shifts (see~\cite[p.~467]{HNRR} and the references therein for more details). 
It was shown in~\cite[Thm.~1.2]{HNRR} (in a much greater generality than
we can treat by probabilistic methods, and in particular for more general 
nonlinearities and initial conditions) that
the solution $u(t,x)$ to \eqref{eq:kpp-periodic} satisfies $|u(t,x)- U_{v^*}(t- \frac3{2v^*\lambda^*}\log t + \xi(t),x)|\to 0$ uniformly in $x>0$ for some bounded function $\xi(t)$. 

Combining this with Corollary~\ref{maincor:pulsating}, and using that $m_t/v^*\to\infty$ as $t\to\infty$, we 
see that $|U(t,x)-U_{v^*}(t+\zeta(t),x)|\to0$ as $t\to\infty$, uniformly in $x>0$, for some bounded function $\zeta(t)$. Equation \ref{eq-Udef} then gives a representation
of the pulsating wave.

The structure of the paper is the following. Section
\ref{sec:prelims_ldp} discusses some preliminaries from
the theory of Branching Brownian motions and large deviations, such as Many-to-One (Two) lemmas, tilting, 
and barrier estimates; the proof of a technical ballot theorem 
(Lemma \ref{ballot:Sk}) is postponed to Appendix A. Section \ref{sec-righttail},
which is the heart of the paper and technically the most challenging,
uses these to derive the crucial Proposition \ref{righttail}, which gives the exact asymptotics for the right tail of the law of $M_t$.
The  argument uses
a first and second moment method, in the spirit of
standard proofs in the theory of branching random walks,
see e.g. \cite{Aidekon} and \cite{Shi}; Our argument
is closest to \cite{BDZ}. However,
the periodicity of the coefficients forces significant departure from the standard approach, and in particular forces us to work with certain stopping lines in order to recover an independent increments structure. The proof of the main results follows relatively quickly from the tail
estimate, and is provided in Section \ref{sec:mainresults}. Finally, Section \ref{sec:extensions} discusses extensions to Branching Brownian Motion with periodic drift and diffusion coefficients, and to branching integer-valued 
random walks with periodic branching rates.

\section{Preliminaries}\label{sec:prelims_ldp}
We begin with some important preliminary estimates. First, we define some of the notation we will need later on.

In addition to our BBM, it will be helpful to consider a standard Brownian motion $\{B_t\}$ and its measure $\bP^x$ under which $B_0=x$. We will abbreviate $\bP:=\bP^0$, and let $\{\cF^B_t\}_{t\ge0}$ be its natural filtration.

Analogously, it will often be convenient to consider a different probability measure $\P^x$ under which the BBM begins at $x$, rather than zero. Note that this equivalent to replacing the branching rate $g$ with $g(x+\cdot)$ and the process $\{X^{(v)}_t\}$ with $\{x+X^{(v)}_t\}$. Replacing $g$ by $g(x+\cdot)$ in \eqref{eigen} above, one sees that the eigenvalues (and in particular $\lambda^*$ and $v^*$) do not change, whereas $\psi$ is replaced by $\psi(x+\cdot,\lambda)$. Notice that since, for any given $\lambda$, $\psi(\cdot,\lambda)$ is positive, continuous and $1$-periodic, it is bounded above and below by positive constants. When we refer to this fact, we will often simply say ``$\psi$ is bounded'' for brevity.

Recall that $\cN_t$ denotes the set of particles alive at time $t$, and that $X_t^{(v)}$ is the position at time $t$ of a particle $v\in \cN_t$. For every $v\in \cN_t$, we extend the latter definition and let $X_s^{(v)}$ for $s \leq t$ denote the position of the unique ancestor of $v$ that is alive at time $s$ (whenever $v\in\cN_s$ this unique ancestor would be $v$ itself). Similarly, for a particle $v$ which is alive before time $t$, we let $M_t^{(v)}$ be the maximum of $X_t^{(w)}$ amongst those particles $w\in\cN_t$ which are direct descendants of $v$.

Let $U=\bigcup_{t\ge0}\cN_t$ be the set of all particles. Given a $\bP$-almost surely finite $\{B_t\}$-stopping time $\tau$, we let $\tau^{(v)}$ be the analogous stopping time for $\{X^{(v)}_t\}$ and define the \emph{stopping line}
\[L_\tau:=\Big\{(v,t)\in U\times[0,\infty):v\in\cN_t,\tau^{(v)}=t\Big\},\]
and consider $\cN_{\tau}:=\{u\in U:(u,t)\in L_\tau\text{ for some }t\}$. Observe that if $\tau=t$ is constant, then $\cN_\tau=\cN_t$, so our notation is consistent.

Finally, we note that we will regularly use $C$ to denote a universal positive constant which may change from line to line. For any specific constant that is kept fixed throughout, we will 
use a different symbol.

\subsection{Many-to-few lemmas and large deviations estimates}
The Many-to-One Lemma and Many-to-Two Lemma, which can be traced back to the works~\cite{P74,KP76} (see also~\cite{HR17}), will both be essential for our analysis. The stopping line version we state, along with a more general introduction to stopping lines, can be found in ~\cite[\S2.3]{Maillard}.

\begin{lemma}[Many-to-One Lemma]\label{lem:many-to-one}
Let $\tau$ be a $\bP^x$-almost surely finite $\{\cF^B_t\}$-stopping time, and let $f:C[0,\infty)\to\R$ be a measurable function such that $f\big(\{B_s\}_{s\ge0}\big)$ is $\cF^B_{\tau}$-measurable. Then
\[
\E^x\left[ \sum_{v\in\cN_\tau} f\big(\{X_s^{(v)}\}_{s\ge0}\big)\right] = \bE^x\left[e^{\int_0^\tau g(B_s)ds}f\big(\{B_s\}_{s\ge0}\big)\right].
\]
\end{lemma}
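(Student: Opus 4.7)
The plan is to establish the identity first for deterministic $\tau=t$ and then bootstrap to general stopping times, invoking the Feynman--Kac formula for the single-particle case and the branching property to lift it to the many-particle sum.

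First I would treat deterministic $\tau=t$ with $f$ depending only on the endpoint, i.e.\ $f(\omega)=h(\omega_t)$. Setting $u(t,x):=\E^x\bigl[\sum_{v\in\cN_t}h(X_t^{(v)})\bigr]$ and conditioning on the first branching time $T$ of the initial particle, the branching property yields (after a short computation) that $u$ is a classical solution of $\partial_t u=\tfrac12\partial_{xx}u+g(x)u$ with $u(0,\cdot)=h$. The Feynman--Kac formula identifies the unique bounded solution as $\bE^x\bigl[e^{\int_0^t g(B_s)ds}h(B_t)\bigr]$, giving the lemma's identity in this special case.

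To extend to path-dependent $f$ (still at deterministic $\tau=t$), I would apply the functional monotone class theorem. The identity is additive and closed under bounded monotone limits, so it suffices to verify it on cylinder functions $f(\omega)=\prod_{i=1}^n h_i(\omega_{s_i})$ with $s_1<\cdots<s_n\le t$. By iterating the endpoint case on each subinterval $[s_{i-1},s_i]$ and using the Markov property of Brownian motion together with the branching decomposition at the times $s_i$---at which each alive particle begets its own independent subtree---this reduces inductively to the endpoint case.

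For general stopping times, I would approximate $\tau$ from above by dyadic stopping times $\tau_n:=2^{-n}\lceil 2^n\tau\rceil$, which take countably many values. For each $\tau_n$, partition by $\{\tau_n=t_k\}\in\cF^B_{t_k}$ to reduce to the deterministic-time identity on each atom (the restriction of $f$ is automatically $\cF^B_{t_k}$-measurable there since $\tau\le\tau_n=t_k$), sum over $k$, and pass $n\to\infty$ using monotone convergence for $f\ge 0$ (extending to signed $f$ by $f=f^+-f^-$) together with a.s.\ finiteness of $\tau$ and pathwise continuity of $s\mapsto\int_0^s g(B_r)\,ds$; the stopping-line formalism of \cite{Maillard} guarantees that $\cN_{\tau_n}\to\cN_\tau$ in the appropriate sense. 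The main obstacle is the bookkeeping at the stopping line---justifying the branching decomposition cleanly when particles born at different times are stopped at different times $\tau^{(v)}$---while the single-particle content is the classical Feynman--Kac with creation rate $g$ and is technically straightforward.
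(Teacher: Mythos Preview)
The paper does not give a proof of this lemma; it is stated as a known result, with the stopping-line version attributed to \cite[\S2.3]{Maillard} (and the classical formula traced to \cite{P74,KP76}; see also \cite{HR17}). Your proposal therefore supplies what the paper deliberately omits: a from-scratch derivation via Feynman--Kac for the endpoint case, monotone class for path functionals, and dyadic approximation for the stopping time. This is a standard and sound route.

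One point worth making precise in your final step: for $\tau_n\downarrow\tau$ and $f\ge 0$, the left-hand side at level $n$ equals $\sum_{u\in\cN_\tau} f\bigl(\{X_s^{(u)}\}\bigr)\,N_u^{(n)}$, where $N_u^{(n)}\ge 1$ counts descendants of $u$ in $\cN_{\tau_n}$ (each such descendant shares the path of $u$ up to time $\tau^{(u)}$, hence the same $f$-value since $f$ is $\cF^B_\tau$-measurable). You then need $N_u^{(n)}\downarrow 1$ a.s.\ for every $u\in\cN_\tau$ simultaneously; this holds because $\tau_n^{(u)}-\tau^{(u)}\le 2^{-n}$, the branching rate $g$ is bounded, and $\cN_\tau$ is a.s.\ countable, so one concludes by monotone convergence on both sides. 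This is the actual content of ``$\cN_{\tau_n}\to\cN_\tau$ in the appropriate sense,'' and spelling it out is preferable to invoking the very stopping-line reference you are trying to bypass.
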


In the next lemma, we set $I_s=\{z\in \R: z\leq q(s)\}$
where $q(\cdot)$ is some continuous function (in our application, $q(\cdot)$ will always be an affine function). The statement holds in
greater generality and can be deduced from 
\cite[Lemma 2.3.3]{Maillard} by considering the process
$X^{(v)}_\cdot$ stopped upon exiting the domain $\{(s,z):s\le t,z \in I_s\}$.
\begin{lemma}[Many-to-Two Lemma]\label{lem:many-to-two}
Assume the set-up of Lemma \ref{lem:many-to-one}, and in addition suppose $f$ is of the form
\[
f\big(\{B_s\}_{s\ge0}\big) = h(\tau)\one_{\{ B_s \in I_s \text{ for } s\le\tau\}}.
\]
Define
\[
p(s,x,dy):=\E^x\left[\sum_{v\in\cN_s}\one_{\{X^{(v)}_u\in I_u \text{ for }u<s,\,\tau^{(v)}>s,\,X_u^{(v)}\in dy\}}\right].
\]
Then
\begin{multline}\label{mtt:constant}
\E^x\left[\sum_{{\substack{v,w\in\cN_\tau \\ v\neq w}}} f\big(\{X_s^{(v)}\}_{s\ge0}\big)f\big(\{X_s^{(w)}\}_{s\ge0}\big)\right] \\
= 2h(t)^2\int_0^t \int_{I_s} g(y)\left(\E^y\left[\sum_{v\in\cN_{t-s}}\one_{\{X^{(v)}_u\in I_{s+u} \text{ for }u\le t-s\}}\right]\right)^2p(s,x,dy) \, ds
\end{multline}
if $\tau=t$ is a constant, and
\begin{multline}\label{mtt:hitting}
\E^x\left[\sum_{{\substack{v,w\in\cN_\tau \\ v\neq w}}} f\big(\{X_s^{(v)}\}_{s\ge0}\big)f\big(\{X_s^{(w)}\}_{s\ge0}\big)\right] \\
= 2\int_0^\infty \int_{I_s} g(y)\left(\E^y\left[\sum_{v\in\cN_\tau}h\left(s+\tau^{(v)}\right)\one_{\{X^{(v)}_u\in I_{s+u} \text{ for }u\le \tau^{(v)}\}}\right]\right)^2p(s,x,dy) \, ds
\end{multline}
if $\tau$ is the first hitting time of a closed set $F\subset \R$.
\end{lemma}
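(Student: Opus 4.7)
The plan is to decompose the double sum over distinct pairs $(v,w)\in\cN_\tau\times\cN_\tau$ according to their most recent common ancestor (MRCA). Given any two distinct particles $v\ne w$, there is a unique particle $u$ in the genealogy whose fission event first separates the lineages leading to $v$ and to $w$. Let $s\in[0,\tau]$ denote the time of this fission event, and $y=X^{(u)}_s$ its location. The strong branching property applied at this stopping line then guarantees that, conditionally on the trajectory of $u$ up to time $s$ and on the occurrence of fission at $(s,y)$, the two subtrees rooted at the two children of $u$ are independent copies of the BBM started from $y$ at time $s$.

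For the constant-time case~\eqref{mtt:constant}, the prefactor $h(\tau)=h(t)$ pulls out as $h(t)^2$. The indicator $\one_{\{X^{(v)}_r\in I_r\text{ for }r\le t\}}$ splits into the constraint that the MRCA's trajectory stays in $I_r$ for $r<s$ (and has not yet branched) together with the two analogous constraints in each subtree over $r\in[s,t]$. Applying the Many-to-One Lemma to the MRCA viewed as a particle that has not yet fissioned, the expected occupation measure of such candidate MRCAs at time $s$ in $dy$ is precisely $p(s,x,dy)$, and conditional on being at $y$ the fission hazard rate equals $g(y)$, giving the infinitesimal intensity $g(y)\,p(s,x,dy)\,ds$ of MRCA-fission events. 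Conditional independence of the two child subtrees turns the expected product into the square of $\E^y\bigl[\sum_{v\in\cN_{t-s}}\one_{\{X^{(v)}_u\in I_{s+u}\text{ for }u\le t-s\}}\bigr]$. The factor of $2$ accounts for the two ways to label which of $v,w$ descends from which of the two children of $u$. Integrating in $(s,y)$ yields~\eqref{mtt:constant}.

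For the hitting-time case~\eqref{mtt:hitting}, the same decomposition applies, but the per-particle termination time is no longer deterministic. The MRCA must satisfy $\tau^{(u)}>s$ (it has not yet hit $F$), so its path constraint is again encoded in the factor $p(s,x,dy)$. In the subtree initiated at $(s,y)$, a descendant $v$ now has stopping time equal to $s$ plus its own hitting time $\tau^{(v)}$ computed along the restarted trajectory, so the weight $h(\tau)$ must be evaluated at $s+\tau^{(v)}$. Using conditional independence of the two subtrees in the same way produces the squared one-particle expectation $\E^y\bigl[\sum_{v\in\cN_\tau}h(s+\tau^{(v)})\one_{\{X^{(v)}_u\in I_{s+u}\text{ for }u\le\tau^{(v)}\}}\bigr]$, and the factor of $2$ arises exactly as before. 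The domain of $s$-integration extends to $[0,\infty)$ since the stopping times are not uniformly bounded.

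The main technical point is the identification of $g(y)\,p(s,x,dy)\,ds$ as the intensity measure of MRCA-terminal-fission events subject to the path constraint; this is essentially a stopping-line Many-to-One statement in the spirit of \cite[Lemma~2.3.3]{Maillard}, obtained by stopping the spine process upon exit from the space-time domain $\{(r,z):r\le\tau,\,z\in I_r\}$ and noting that, conditional on survival without fission up to time $s$, the branching hazard at $y$ is $g(y)$. Once this identification is in hand, both displays follow by independence of the subtrees at the MRCA and the elementary counting of ordered pairs.
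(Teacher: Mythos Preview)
The paper does not prove this lemma; it states that the result ``can be deduced from \cite[Lemma~2.3.3]{Maillard} by considering the process $X^{(v)}_\cdot$ stopped upon exiting the domain $\{(s,z):s\le t,\,z\in I_s\}$.'' Your proposal supplies exactly the standard MRCA-decomposition argument that underlies such many-to-few formulae: decompose pairs by their most recent common ancestor, identify the intensity of candidate MRCA fission events as $g(y)\,p(s,x,dy)\,ds$, and use the branching property to factor the post-fission contribution as a square. This is correct and is essentially the content behind the cited reference, so your approach and the paper's (implicit) approach coincide.

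One small point worth tightening if you want this to stand alone: your justification of the intensity $g(y)\,p(s,x,dy)\,ds$ is heuristic. A clean way to make it rigorous is to condition on the full genealogical tree up to time $s$ and integrate over the fission time of the MRCA using the explicit hazard $g(X_s^{(u)})$ from~\eqref{eq:g-branch-rates}; equivalently, one can appeal to a spine construction with two distinguished particles and compute the expected number of ordered pairs directly. Either route recovers exactly the integrand you wrote.
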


After applying these lemmas, we will want to use an exponential change of measure (tilt) that makes the $e^{\int_0^\tau g(B_s)ds}$ term vanish. With $\psi$ as in \eqref{eigen}, define the function
\begin{equation}
    \label{eq-phidef}
\phi(x,\lambda) := \lambda + \frac{\psi_x(x,\lambda)}{\psi(x,\lambda)} = \partial_x\Big(\lambda x + \log\psi(x,\lambda)\Big).
\end{equation}
Notice that $\phi(\cdot,\lambda)$ is $1$-periodic, and it follows from the definition and \eqref{eigen} that
\begin{equation}\label{phi:pde}
\frac12\phi_x + \frac12\phi^2 = \gamma(\lambda) - g(x).
\end{equation}
With $\phi(\cdot,\lambda)$ as in \eqref{eq-phidef}, consider the process $\{Y_t\}_{t\ge0}$  satisfying
\[
dY_t = \phi(Y_t,\lambda)dt + dW_t,\qquad Y_0=x,
\]
where $\{W_t\}_{t\ge0}$ is a standard Wiener process, and
let $\bP_{\lambda}^x$ denote the associated measure.

Since $\phi(\cdot,\lambda)$ is $1$-periodic, it follows that the law of $\{Y_t-Y_0\}_{t\ge0}$ is the same under $\bP_{\lambda}^x$ as under $\bP_{\lambda}^{x+1}$. Moreover, this process provides us with our desired tilt.

\begin{lemma}\label{tilt}
For any $x\in\R$, $\lambda>0$, $t>0$ and any measurable function $F:C[0,t]\to\R$, we have that
\begin{equation}\label{eq:tilt}
\bE_{\lambda}^x\Big[ F\big( \{Y_s\}_{s\le t} \big) \Big] = \bE^x\left[\frac{\psi(B_t,\lambda)}{\psi(x,\lambda)}e^{\lambda (B_t-x) + \int_0^t g(B_s) ds - \gamma(\lambda)t}F\big( \{B_s\}_{s\le t} \big)\right].
\end{equation}
\end{lemma}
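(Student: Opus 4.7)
The statement to prove (Lemma \ref{tilt}) is a Girsanov-type change of measure. The plan is to apply the standard Girsanov theorem to identify the Radon--Nikodym derivative $d\bP_{\lambda}^x/d\bP^x$ on $\cF^B_t$, and then use It\^o's formula together with the identity \eqref{phi:pde} to rewrite this derivative in the explicit form claimed on the right-hand side of \eqref{eq:tilt}.

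First I would note that, since $g\in C^1(\R)$ is $1$-periodic and strictly positive, elliptic regularity and the normalization of $\psi(\cdot,\lambda)$ imply that $\psi(\cdot,\lambda)$ is smooth, $1$-periodic, and bounded above and below by positive constants. Consequently $\phi(\cdot,\lambda)=\partial_x(\lambda x+\log\psi(x,\lambda))$ is bounded and Lipschitz, so the SDE for $\{Y_t\}$ has a unique strong solution, and Novikov's condition is trivially satisfied. By Girsanov's theorem,
\[
\bE_{\lambda}^x\Big[F\big(\{Y_s\}_{s\le t}\big)\Big]=\bE^x\left[\exp\!\left(\int_0^t\phi(B_s,\lambda)\,dB_s-\frac12\int_0^t\phi(B_s,\lambda)^2\,ds\right)F\big(\{B_s\}_{s\le t}\big)\right].
\]

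Next I would remove the stochastic integral by applying It\^o's formula to $h(x):=\lambda x+\log\psi(x,\lambda)$, whose derivatives are $h'=\phi(\cdot,\lambda)$ and $h''=\phi_x(\cdot,\lambda)$. This gives
\[
\int_0^t\phi(B_s,\lambda)\,dB_s=h(B_t)-h(x)-\frac12\int_0^t\phi_x(B_s,\lambda)\,ds.
\]
Substituting into the Girsanov density produces the factor
\[
\exp\!\left(h(B_t)-h(x)-\frac12\int_0^t\bigl[\phi_x(B_s,\lambda)+\phi(B_s,\lambda)^2\bigr]\,ds\right).
\]
Now I would invoke the key identity \eqref{phi:pde}, which rewrites $\tfrac12\phi_x+\tfrac12\phi^2=\gamma(\lambda)-g(x)$ and converts the $ds$-integral into $\gamma(\lambda)t-\int_0^t g(B_s)\,ds$. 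Finally, unfolding $h(B_t)-h(x)=\lambda(B_t-x)+\log(\psi(B_t,\lambda)/\psi(x,\lambda))$ yields exactly the right-hand side of \eqref{eq:tilt}.

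There is no serious obstacle here: the argument is a routine combination of Girsanov and It\^o, with the role of \eqref{phi:pde} being the one nontrivial input (it is precisely the eigenproblem \eqref{eigen} rewritten in logarithmic variables and is what arranges for the $\phi_x+\phi^2$ term to collapse to $g$ and a constant). The only point that genuinely warrants care is justifying Girsanov, i.e.\ verifying integrability of the exponential martingale; as noted above, the boundedness of $\phi$ inherited from the boundedness of $\psi$ and $\psi_x/\psi$ makes this immediate.
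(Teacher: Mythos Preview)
Your proposal is correct and follows essentially the same approach as the paper: apply Girsanov (with Novikov's condition verified via the boundedness of $\phi$), use It\^o's formula on $h(x)=\lambda x+\log\psi(x,\lambda)$ to eliminate the stochastic integral, and invoke \eqref{phi:pde} to simplify the remaining $ds$-integral. The paper presents the same steps in a slightly different order (stating the target identity first and then verifying it), but the content is identical.
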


\begin{proof}
This will follow immediately from the Girsanov formula if we can show that
\begin{multline}\label{tilt:suff}
\int_0^t\phi(B_s,\lambda)dB_s - \frac12\int_0^t\phi^2(B_s,\lambda)ds \\
= \log\psi(B_t,\lambda)-\log\psi(B_0,\lambda) + \lambda(B_t-B_0) + \int_0^t g(B_s)ds - \gamma(\lambda)t.
\end{multline}
(Note that for fixed $t$, the left hand side is uniformly bounded, and so Novikov's condition \cite[Proposition VIII.1.15]{RY} is trivially satisfied.) To prove \eqref{tilt:suff}, first note that by \eqref{eq-phidef} and  Ito's lemma,
\[
\lambda (B_t-B_0) + \log\psi(B_t,\lambda) - \log\psi(B_0,\lambda) = \int_0^t\phi(B_s,\lambda) dB_s + \frac12\int_0^t\phi_x(B_s,\lambda)ds.
\]
Rearranging the above display, we find
\begin{multline*}
\int_0^t\phi(B_s,\lambda)dB_s - \frac12\int_0^t\phi^2(B_s,\lambda)ds \\
= \log\psi(B_t,\lambda)-\log\psi(B_0,\lambda) + \lambda(B_t-B_0) - \int_0^t\left(\frac12\phi_x(B_s,\lambda) + \frac12\phi^2(B_s,\lambda)\right)\, ds;
\end{multline*}
plugging in \eqref{phi:pde}, we obtain \eqref{tilt:suff} and complete the proof.
\end{proof}

\begin{remark}\label{tilt:stopped}
Since the Radon-Nikodym derivative is a martingale, \eqref{eq:tilt} remains true when $t$ is replaced by a bounded stopping time $\tau$. It hence also holds for any stopping time $\tau$ if there exists a deterministic  $t>0$ such that $F$ is zero on $\tau>t$, since on this event $\tau$ is equal to the bounded stopping time $\tau\wedge t$.
\end{remark}

We are interested in performing this tilt with $\lambda=\lambda^*$ of \eqref{eq:lambda_lambda*}, which we have not yet shown to exist. We do this in the following lemma.

\begin{lemma}\label{gamma-props}
The function $\gamma$ is analytic and convex on $\R$. Moreover, there exists a unique $\lambda^*>0$ such that
\[
\frac{\gamma(\lambda^*)}{\lambda^*}=\min_{\lambda>0}\frac{\gamma(\lambda)}{\lambda},
\]
and this quantity is positive.
\end{lemma}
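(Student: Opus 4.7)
The plan is to verify the three claims in turn: analyticity of $\gamma$, convexity of $\gamma$, and then existence, uniqueness, and positivity of the minimizer $\lambda^*$ using these two properties together with a quadratic lower bound on $\gamma$ obtained from \eqref{phi:pde}.

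For \emph{analyticity}, I would view \eqref{eigen} as the principal-eigenvalue problem for the second-order elliptic operator
\[
L_\lambda \psi \;=\; \tfrac12 \psi_{xx} + \lambda \psi_x + \bigl(\tfrac12\lambda^2 + g(x)\bigr)\psi
\]
acting on $H^2(\R/\Z)$. Its coefficients depend polynomially (hence entirely analytically) on $\lambda$, so $(L_\lambda)$ is an entire analytic family of type (A) in the sense of Kato. Since $g$ is smooth and positive, the semigroup $e^{tL_\lambda}$ is positivity-improving, so by Krein--Rutman the principal eigenvalue $\gamma(\lambda)$ is simple and isolated in the spectrum of $L_\lambda$ for every $\lambda \in \R$. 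Kato's analytic perturbation theory then yields that $\lambda \mapsto \gamma(\lambda)$ is real-analytic on $\R$.

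For \emph{convexity}, I would apply Lemma \ref{tilt} with $F \equiv 1$, so the left-hand side equals $1$; using that $\psi(\cdot,\lambda)$ is bounded above and below by positive constants, this yields
\[
\gamma(\lambda) \;=\; \lim_{t\to\infty} \tfrac{1}{t}\log \bE^x\!\bigl[e^{\lambda B_t + \int_0^t g(B_s)\, ds}\bigr].
\]
For each fixed $t$, the expression inside the limit is a cumulant generating function in $\lambda$ (with respect to the reweighted law $e^{\int_0^t g(B_s)ds}d\bP^x$, which does not depend on $\lambda$), hence convex by H\"older's inequality. Convexity is preserved under pointwise limits, so $\gamma$ is convex on $\R$.

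For the claims about \emph{$\lambda^*$}, I would integrate \eqref{phi:pde} over $[0,1]$: periodicity of $\phi(\cdot,\lambda)$ kills the $\phi_x$ term, giving
\[
\gamma(\lambda) \;=\; \tfrac12\!\int_0^1 \phi(x,\lambda)^2\, dx + \bar g, \qquad \bar g := \int_0^1 g(x)\, dx > 0.
\]
From \eqref{eq-phidef} together with the periodicity of $\log\psi(\cdot,\lambda)$, $\int_0^1 \phi(x,\lambda)\, dx = \lambda$, so Cauchy--Schwarz yields $\gamma(\lambda) \ge \lambda^2/2 + \bar g$ for every $\lambda \in \R$. For $\lambda > 0$ this gives
\[
\frac{\gamma(\lambda)}{\lambda} \;\ge\; \frac{\lambda}{2} + \frac{\bar g}{\lambda} \;\ge\; \sqrt{2\bar g} \;>\; 0
\]
by AM--GM, with the lower bound diverging as $\lambda \to 0^+$ or $\lambda \to \infty$. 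Hence $\gamma/\lambda$ attains its infimum on $(0,\infty)$ at some $\lambda^* > 0$ with $\gamma(\lambda^*)/\lambda^* \ge \sqrt{2\bar g}$, establishing existence and positivity. For uniqueness, suppose $0 < \lambda_1 < \lambda_2$ both achieve the minimum $c$; by convexity the chord bound gives $\gamma(\lambda) \le c\lambda$ on $[\lambda_1, \lambda_2]$, while minimality gives $\gamma(\lambda) \ge c\lambda$ there, so $\gamma(\lambda) \equiv c\lambda$ on this interval. Analyticity then forces $\gamma(\lambda) = c\lambda$ for all $\lambda \in \R$, contradicting the quadratic lower bound for large $\lambda$. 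The main obstacle is the analyticity step, which I handle by a direct appeal to Kato's theorem; the rest reduces to elementary manipulations of formulas already derived in the excerpt.
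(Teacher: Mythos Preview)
Your proof is correct and shares the paper's overall architecture---analyticity via Kato, a quadratic lower bound on $\gamma$ forcing $\gamma(\lambda)/\lambda\to\infty$ at both endpoints, then convexity plus analyticity to exclude multiple minimizers---but two ingredients are obtained differently. For the lower bound, the paper uses the representation \eqref{exp:conv} together with the crude sandwich $\alpha\le g\le\beta$ to get $\gamma(\lambda)\in[\tfrac{\lambda^2}{2}+\alpha,\tfrac{\lambda^2}{2}+\beta]$; your route---integrating \eqref{phi:pde} over a period and applying Cauchy--Schwarz to $\int_0^1\phi\,dx=\lambda$---is more algebraic and yields the slightly sharper constant $\bar g\ge\alpha$, though either suffices. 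The paper does not actually supply a proof of convexity within this lemma (it is simply asserted and then used in the uniqueness step), so your cumulant-generating-function argument fills a small gap. Your uniqueness argument (chord bound forces $\gamma\equiv c\lambda$ on an interval, then identity theorem) is a cosmetic variant of the paper's (two minimizers force $\gamma'$ constant on an interval, hence $\gamma$ affine by analyticity, contradicting the quadratic bounds).
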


\begin{proof}
That $\gamma$ is analytic follows from standard analytic perturbation theory, see e.g. ~\cite[Example VII.2.12]{Kato}). Moreover, since $\psi$ is bounded, we may apply Lemma \ref{tilt} to deduce
\begin{equation}\label{exp:conv}
\bE^x\left[\exp\left(\lambda B_t + \int_0^t g(B_s) ds\right)\right] = e^{\lambda x + \gamma(\lambda)t+O(1)}.
\end{equation}
(In fact, the $O(1)$ term is also uniform for $\lambda\in K$ if $K$ is compact, but we do not use this.) If $\alpha$ and $\beta$ are the respective lower and upper bounds of $g$, then \eqref{exp:conv} and the identity $\bE^x[e^{\lambda B_t}] = e^{\lambda x+\lambda^2t/2}$ imply that
\begin{equation}\label{gamma-bounds}
\gamma(\lambda) \in \left[\frac{\lambda^2}2+\alpha,\frac{\lambda^2}2+\beta\right].
\end{equation}
It follows that $\frac{\gamma(\lambda)}{\lambda}\to +\infty$ as both $\lambda\to0^+$ and $\lambda\to+\infty$. Thus, since $\gamma$ is continuous, it follows that
\[
v^* = \min_{\lambda>0}\frac{\gamma(\lambda)}{\lambda}
\]
exists, and applying \eqref{gamma-bounds} again implies that $v^*>0$. Let $\lambda^*$ be a minimizer. Then
\[
0 = \frac{d}{d\lambda}\frac{\gamma(\lambda)}{\lambda}\bigg|_{\lambda=\lambda^*} = \frac{\gamma'(\lambda^*) - \frac{\gamma(\lambda^*)}{\lambda^*}}{\lambda^*},
\]
that is, $v^*=\gamma'(\lambda^*)$. If $\lambda^*$ were not unique, the convexity of $\gamma$ would imply that $\gamma'$ is constant on an interval, and analyticity would then imply that $\gamma$ is affine. But this is clearly impossible by \eqref{gamma-bounds}, so $\lambda^*$ is unique.
\end{proof}

From now on, we will deal exclusively with $\lambda=\lambda^*$. Since $\lambda^*v^*-\gamma(\lambda^*)=0$ and $\psi$ is bounded, Lemma \ref{lem:many-to-one} and Lemma \ref{tilt} imply
\begin{align*}
\E^x\left[\sum_{v\in \cN_t} \one_{\{X^{(v)}_t-v^*t \in [y,y+1]\}} \right] &=\bE^x\left[e^{\int_0^tg(B_s)ds}\,;\,B_t-v^*t\in[y,y+1]\right] \\
&= e^{-\lambda^*(v^*t+y+O(1))+\gamma(\lambda^*)t}\bP^x_{\lambda^*}\Big(Y_t-v^*t \in [y,y+1]\Big) \\
&= e^{-\lambda^*y + O(1)}\bP^x_{\lambda^*}\Big(Y_t-v^*t \in [y,y+1]\Big),
\end{align*}
for any $x,y\in\R$. Thus, studying asymptotics for $\{Y_t\}$ will be crucial in our proof. We begin with a large deviations principle.

\begin{lemma}\label{ldp}
Let $\mu^x_t$ be the law of $\{\frac{Y_t}t\}$ under $\bP^x_{\lambda^*}$. Then $\{\mu^x_t\}$ satisfies a large deviations principle with good, convex rate function
\[
I(z) := \gamma^*(z) - \Big\{\lambda^*z - \gamma(\lambda^*)\Big\},
\]
where $\gamma^*(z) = \sup_{\lambda\in\R}\big\{\lambda z - \gamma(\lambda)\big\}$ denotes the Legendre transform of $\gamma$.
\end{lemma}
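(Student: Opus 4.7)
The plan is to compute the logarithmic moment generating function of $Y_t$ under $\bP^x_{\lambda^*}$ and then apply the Gärtner--Ellis theorem. For the computation, I would start from the tilt formula \eqref{eq:tilt} of Lemma \ref{tilt}: for every $\lambda\in\R$,
\[
\bE^x_{\lambda^*}\!\left[e^{\lambda Y_t}\right] = e^{-\lambda^*x - \gamma(\lambda^*)t}\bE^x\!\left[\frac{\psi(B_t,\lambda^*)}{\psi(x,\lambda^*)}\,e^{(\lambda+\lambda^*)B_t + \int_0^t g(B_s)\,ds}\right].
\]
Since $\psi(\cdot,\lambda^*)$ is bounded above and below by positive constants, the ratio $\psi(B_t,\lambda^*)/\psi(x,\lambda^*)$ contributes only an $O(1)$ factor. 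Invoking \eqref{exp:conv} with the parameter $\lambda+\lambda^*$ in place of $\lambda$ then yields
\[
\bE^x_{\lambda^*}\!\left[e^{\lambda Y_t}\right] = \exp\!\Big(\big[\gamma(\lambda+\lambda^*) - \gamma(\lambda^*)\big]\,t + O(1)\Big),
\]
so the scaled limit
\[
\Lambda(\lambda) \;:=\; \lim_{t\to\infty}\frac1t\log\bE^x_{\lambda^*}\!\left[e^{\lambda Y_t}\right] \;=\; \gamma(\lambda+\lambda^*) - \gamma(\lambda^*)
\]
exists in $\R$ for every $\lambda$, uniformly in $x$ on compact sets.

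Next, I would verify the hypotheses of the Gärtner--Ellis theorem. Finiteness of $\Lambda$ on all of $\R$ is immediate from \eqref{gamma-bounds}. By Lemma \ref{gamma-props}, $\gamma$ is analytic on $\R$, hence so is $\Lambda$; in particular $\Lambda$ is differentiable everywhere and essentially smooth. Gärtner--Ellis (see e.g.\ Dembo--Zeitouni) then delivers the LDP for $\{\mu^x_t\}$ with good convex rate function $\Lambda^*$. Computing the Legendre transform by the change of variable $\mu = \lambda + \lambda^*$,
\[
\Lambda^*(z) \;=\; \sup_{\lambda\in\R}\big\{\lambda z - \gamma(\lambda+\lambda^*) + \gamma(\lambda^*)\big\} \;=\; \sup_{\mu\in\R}\big\{\mu z - \gamma(\mu)\big\} - \lambda^*z + \gamma(\lambda^*) \;=\; I(z),
\]
which is exactly the claimed rate function.

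Finally, I would note the general properties: convexity of $I$ is automatic as a Legendre transform, and goodness (compactness of level sets) follows from \eqref{gamma-bounds}, which shows $\gamma(\lambda)\ge \lambda^2/2 + \alpha$ and hence $\gamma^*(z) \le z^2/2 - \alpha$ but more importantly $\gamma^*(z)\ge z^2/2 - \beta \to \infty$ as $|z|\to\infty$, so the level sets $\{z:I(z)\le c\}$ are bounded (and closed by continuity). There is no real obstacle in this argument; the only place requiring any care is making the $O(1)$ error in the moment generating function uniform in $x$ and ensuring it does not affect the $t\to\infty$ limit, and this is handled by the uniform positive bounds on $\psi(\cdot,\lambda^*)$.
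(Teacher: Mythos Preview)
Your proposal is correct and follows essentially the same approach as the paper: compute the limiting log-moment generating function via the tilt formula \eqref{eq:tilt} and the boundedness of $\psi$, then apply G\"artner--Ellis using the differentiability of $\gamma$ from Lemma~\ref{gamma-props}, and identify the rate function by the shift $\mu=\lambda+\lambda^*$. The only cosmetic difference is in the goodness argument: the paper bounds $I(z)$ below by affine functions obtained from fixed $\lambda^\pm$ on either side of $\lambda^*$, whereas you use the quadratic bounds \eqref{gamma-bounds} to control $\gamma^*$ directly; both give $I(z)\to\infty$ as $|z|\to\infty$ and hence compact level sets.
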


\begin{proof}
That $I$ is a convex rate function follows from the convexity 
of $\gamma$ and the definition of $\gamma^*$. For the large deviations principle, observe from \eqref{eq:tilt} that for any $\eta\in\R$,
\begin{equation}\label{ldp:eq1}
\frac1t\log\bE_{\lambda^*}^x\left[e^{\eta Y_t}\right] = \frac1t\log\bE^x\left[\frac{\psi(B_t,\lambda^*)}{\psi(x,\lambda^*)}e^{(\lambda^*+\eta)(B_t-x)+\int_0^tg(B_s)ds}\right]+\frac{\lambda^*x}t-\gamma(\lambda^*).
\end{equation}
Since $\psi$ is bounded, it follows from \eqref{exp:conv} that
\[
\lim_{t\to\infty}\frac1t\log\bE_{\lambda^*}^x\left[e^{\eta Y_t}\right] = \gamma(\lambda^*+\eta) - \gamma(\lambda^*).
\]
The right hand side of the above display is a globally differentiable function of $\eta$ by Lemma \ref{gamma-props}, and so by the G\"artner--Ellis Theorem (cf., e.g.,~\cite[\S2.3]{DZ}), it follows that $\{\mu^x_t\}$ satisfies a large deviations principle with rate function
\[
J(z) := \sup_{\eta\in\R}\Big\{\eta z - \big[\gamma(\lambda^*+\eta) - \gamma(\lambda^*)\big]\Big\} = I(z),
\]
as claimed. Finally, to see that $I$ is a good rate function, fix $\lambda^+>\lambda^*>\lambda^-$. Then, if $z>0$ then
\[ (\lambda^+ -\lambda^*) z- (\gamma(\lambda^+)-\gamma(\lambda^*)) \leq  I(z)\]
while if $z<0$ then
\[ (\lambda^- -\lambda^*) z- (\gamma(\lambda^-)-\gamma(\lambda^*))\leq I(z).\]
This implies that 
$I(z)/|z|\to_{|z|\to \infty} \infty$, as needed.
\end{proof}

\begin{corollary}\label{lln}
For any $x\in\R$, one has that ${Y_t}/t \to v^*$, $\bP_{\lambda^*}^x$-almost surely.
\end{corollary}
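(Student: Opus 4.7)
The plan is to combine the large deviations principle of Lemma \ref{ldp} with a Borel--Cantelli argument along integer times, and then upgrade continuous-time convergence via a simple oscillation bound.

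First I would identify the unique zero of the rate function $I$. By Legendre duality, $\gamma^*(z)\ge \lambda^* z-\gamma(\lambda^*)$ with equality iff $z=\gamma'(\lambda^*)$, and in the proof of Lemma \ref{gamma-props} it was shown that $\gamma'(\lambda^*)=v^*$. Consequently $I\ge 0$ with $I(z)=0$ iff $z=v^*$. Since $I$ is a good convex rate function with a unique zero, for any $\epsilon>0$ the quantity $c_\epsilon := \inf_{|z-v^*|\ge\epsilon} I(z)$ is strictly positive (it is attained on the compact sublevel sets of $I$, and $v^*$ is the unique zero). The LDP upper bound then yields, for all $t$ sufficiently large,
\[
\bP_{\lambda^*}^x\Big(\big|Y_t/t-v^*\big|\ge\epsilon\Big)\le e^{-c_\epsilon t/2}.
\]

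Specializing to $t=n\in\N$, the right-hand side is summable in $n$, so by Borel--Cantelli, $Y_n/n\to v^*$ almost surely under $\bP_{\lambda^*}^x$. It remains to handle $t\in[n,n+1]$. From the SDE $dY_t=\phi(Y_t,\lambda^*)\,dt+dW_t$ and the fact that $\phi(\cdot,\lambda^*)$ is continuous and $1$-periodic, hence bounded by some constant $K$, one obtains
\[
\sup_{t\in[n,n+1]}|Y_t-Y_n|\le K+\sup_{t\in[n,n+1]}|W_t-W_n|.
\]
By the reflection principle, $\bP(\sup_{t\in[n,n+1]}|W_t-W_n|>\log n)\le 4\,n^{-(\log n)/2}$, which is summable in $n$, so another application of Borel--Cantelli gives $\sup_{t\in[n,n+1]}|Y_t-Y_n|=o(n)$ almost surely. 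Combining this with $Y_n/n\to v^*$ yields $Y_t/t\to v^*$ almost surely.

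I do not expect any serious obstacle here: the main ingredient, the LDP with a strictly positive rate away from $v^*$, is already in hand from Lemma \ref{ldp}, and the remaining work is the standard exponential-tail Borel--Cantelli argument together with a routine modulus-of-continuity bound to pass from integer to continuous times. The only mildly delicate point is ensuring $c_\epsilon>0$, which follows immediately from goodness of $I$ and uniqueness of its minimizer.
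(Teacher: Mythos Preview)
Your proof is correct and follows essentially the same approach as the paper: identify $v^*$ as the unique zero of $I$, apply the LDP plus Borel--Cantelli along integer times, then control the oscillation on unit intervals. The only minor difference is that you bound the oscillation directly from the SDE (bounded drift plus Brownian fluctuation), whereas the paper does the equivalent step via the Girsanov change of measure of Lemma~\ref{tilt}; your argument here is slightly more direct.
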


\begin{proof}
Note that $I(z)=0$ if and only if $z=\gamma'(\lambda^*)=v^*$. Thus, for any $\epsilon>0$, one has $\delta:=I(v^*-\epsilon/2)\wedge I(v^*+\epsilon/2)>0$, and
\begin{equation}\label{point:ldp}
\bP_{\lambda^*}^x\Big(|Y_t-v^*t|>t\epsilon/2\Big) \le Ce^{-\delta t/2}.
\end{equation}
Moreover, by applying Lemma \ref{tilt} and performing a standard computation for Brownian motion, one has
\begin{equation}\label{path:ldp}
P(x,T):=\bP_{\lambda^*}^x\left(\max_{t\in[0,1]}|Y_t-Y_0-v^*t|>T\varepsilon/2\right) \le Ce^{-\delta T/2}
\end{equation}
uniformly for $x\in[0,1]$. Observing that $P(x,T)$ is $1$-periodic in $x$, the bound in \eqref{path:ldp} is thus uniform for $x\in\R$. Thus, for any $n\in\N$, one has
\begin{align*}
\bP_{\lambda^*}^x\left(\bigcup_{t\in[n,n+1]}\{|Y_t-v^*t|>t\varepsilon\}\right) &\le \bP_{\lambda^*}\Big(|Y_n-v^*n|>n\varepsilon/2\Big) \\
& \qquad\qquad + \bP_{\lambda^*}^x\Big(\max_{t\in[0,1]}|Y_{n+t}-Y_n-v^*t)|>n\varepsilon/2\Big) \\
&\le Ce^{-n\delta} + \E_{\lambda^*}^x\Big[P(Y_n,n)\Big] \\
&\le Ce^{-n\delta},
\end{align*}
where the second line follows from the Markov property. It thus follows from the Borel-Cantelli lemma that
\[
\limsup_{t\to\infty}\left|\frac{Y_t}t-v^*\right| \le \varepsilon
\]
$\bP_{\lambda^*}^x$-almost surely, and the claim follows.
\end{proof} 

Combining Corollary \ref{lln} with the discussion above Lemma \ref{ldp}, we see that the expected number of particles within $o(t)$ of $v^*t$ has order $1$. To improve these estimates, one could develop more precise results for the asymptotic behavior of $\{Y_t\}$; however, we will instead use a renewal approach.

We introduce the hitting times: for $k\in \N$,
\begin{equation}\label{Tdef}
T_k:=\inf\big\{t\ge0 \, : \, Y_t \ge k\big\}.
\end{equation}
This is a $\bP_{\lambda^*}^x$-almost surely finite stopping time with exponential tails, since $\frac{k}t < \frac{v^*}2$ for sufficiently large $t>0$, and so
\[
\bP_{\lambda^*}^x\Big(T_k \ge t\Big) \le \bP^x_{\lambda^*}\Big(Y_t\le k\Big) \le \bP^x_{\lambda^*}\left(\frac{Y_t}t\le\frac{v^*}2\right) = e^{-tI(v^*/2)+o(t)}
\]
by Lemma \ref{ldp}. Moreover, $Y_{T_k}=k$ $\bP_{\lambda^*}^x$-almost surely, provided $x\le k$. The key observation is that, by the strong Markov property, $\{T_k-T_{k-1}\}_{k\ge2}$ is IID under $\bP_{\lambda^*}^x$ for any $x\in[0,1]$, with law equal to that of $T_1$ under $\bP_{\lambda^*}$. Moreover, $T_k\to+\infty$ $\bP_{\lambda^*}^x$-almost surely, and so
\[
\bE_{\lambda^*}^0T_1 = \lim_{k\to\infty}\frac{T_k}{k} = \lim_{k\to\infty}\frac{T_k}{Y_{T_k}} = \frac1{v^*}
\]
$\bP_{\lambda^*}^x$-almost surely by Corollary \ref{lln}. This motivates the definition
\begin{equation}\label{Sdef}
S_k := T_k-\frac{k}{v^*},
\end{equation}
which is a mean zero random walk which satisfies $\E_{\lambda^*}[e^{\eta S_1}]<\infty$ for $|\eta|\le\epsilon$, for some $\epsilon>0$.

Intuitively, for $N\sim v^*t$, $Y_t-v^*t$ should have asymptotics which are not too different from those of $N-v^*T_N=-v^*S_N$. Since the asymptotics of sums of IIDs are well understood, it will be beneficial in many cases to convert required estimates into statements about $\{S_k\}$. This motivates the next subsection, where we will state some barrier estimates both of $\{S_k\}$ and $\{Y_t\}$ which we will need in our proof of Theorem~\ref{mainthm:law}.

\subsection{Barrier estimates}
Our first result in this subsection, Lemma \ref{ballot:Sk}, is a collection of barrier estimates for $\{S_k\}$, and will serve as our primary tool moving forward. The lemma
is a slight generalization of \cite[Lemma 2.2 and Lemma 2.3]{BDZ}, and the proof, based on the latter, is given in Appendix A.

In what follows, we will make use of functions $F$ which satisfy, for some $z\ge0$, $a>0$,
\begin{equation}\label{F:assm}
    \supp(F)=[z,z+a] \quad \text{and $F$ is bounded and  Lipschitz on $(z,z+a)$,}
\end{equation}
where $\supp(F)$ is the support of $F$.

\begin{lemma}\label{ballot:Sk}
Let $\{d_N\}$ be a real sequence which  satisfies $|d_N|\le c_0\log N/N$ for some $c_0>0$; define $S_k^{N)}:=S_k+kd_N$. Then there exists $C>0$ such that
\begin{equation}\label{eq:ballot-Sk-upper}
\bP_{\lambda^*}^x\left(y+ S^{(N)}_N \in [z,z+a], \min_{k\le N}\big(y + S_k^{(N)}\big) \ge 0\right) \le \frac{C(y\vee1)(z\vee1)}{N^{3/2}}
\end{equation}
for all $N\ge1$, $y,z\ge 0$ and $x\in[0,1)$, and
\begin{equation}\label{eq:ballot-Sk-lower}
\bP_{\lambda^*}^x\left(y+ S^{(N)}_N \in [0,a], \min_{k\le N}\big(y +  S_k^{(N)}\big) \ge 0\right) \ge \frac{y\vee1}{CN^{3/2}}
\end{equation}
for all $N\ge1$, $0\le y\le \sqrt{N}$ and $x\in[0,1)$. Moreover, there exists $\beta^*>0$ and an increasing function $V:[0,\infty)\to(0,\infty)$ satisfying $\lim_{y\to\infty}\frac{V(y)}{y} = 1$
such that
\begin{equation}\label{eq:ballot-Sk-limit}
    \lim_{N\to\infty}N^{3/2}\bE_{\lambda^*}^x\left[F\big(y+ S_N^{(N)}\big)\,;\,\min_{k\le N}\big( y + S_k^{(N)}\big) \ge 0\right] = \beta^*V^x(y)\int_z^{z+a} F(w)V(w)dw,
\end{equation}
for all $y\ge0$, $x\in[0,1)$, and $F$ satisfying \eqref{F:assm}, where
\[
V^x(y):=\bE_{\lambda^*}^x\left[V(y+S_1)\,;\,y+S_1 \ge 0\right].
\]
Furthermore, if $h$ is a strictly increasing, concave function satisfying $h(0)=0$ and $h(k)\le c_1\log(k+1)$ for some $c_1>0$, then there exists $\delta_m>0$ satisfying $\lim_{m\to\infty}\delta_m=0$ such that
\begin{multline}\label{eq:ballot-Sk-limsup}
    \limsup_{N\to\infty}\bE_{\lambda^*}^x\left[F\big(y+S_N^{(N)}\big)\,;\,\min_{k\le N}\Big(y+y^{1/10}+S_k^{(N)}+h\big(k\wedge(N-k)\big)\Big)\ge0\right] \\
    \le (1+\delta_{y\wedge z})\beta^*V(y)\int_z^{z+a}F(w)V(w)dw
\end{multline}
for all $y\ge1$, $x\in[0,1)$, and $F$ satisfying \eqref{F:assm}. Finally, there exists $C,N_0>0$ such that
\begin{multline}\label{eq:ballot-Sk-concave}
    \bP_{\lambda^*}^x\left( y + S_j^{(N)} \in [z,z+a],\,\min_{k\le j}\Big(y+S_k^{(N)}+h\big(k\wedge(N-k)\big)\Big)\ge0\right) \\
    \le \frac{Cy\big(z+h(N-j)\big)}{N^{3/2}}
\end{multline}
for all $N\ge N_0$, $N/2 \le j\le N$, and $y,z\ge1$.

The constants in \eqref{eq:ballot-Sk-upper}, \eqref{eq:ballot-Sk-lower} and \eqref{eq:ballot-Sk-concave} depend on $a$ and $c_0$ (and $c_1$ in the latter case), but not on $x$ or the choice of $\{d_N\}$; similarly, the constant $\beta^*$ and the function $V$ depend only on the distribution of $S_1$; and the rates of convergence in \eqref{eq:ballot-Sk-limit} and \eqref{eq:ballot-Sk-limsup} may depend on $y$, $c_0$, and the bounds and Lipschitz constant of $F$ (and $c_1$ in the latter case), but not on $x$ or $\{d_N\}$.
\end{lemma}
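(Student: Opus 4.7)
The plan is to adapt the framework of Bramson--Ding--Zeitouni \cite{BDZ}, extending their Lemmas 2.2--2.3 in three directions: uniformity over the starting point $x\in[0,1)$ (which only perturbs the law of the first step $S_1$, since increments $T_k-T_{k-1}$ for $k\ge 2$ are i.i.d.\ with the law of $T_1$ under $\bP_{\lambda^*}^0$), inclusion of a Lipschitz drift perturbation $kd_N$ with $|Nd_N|=O(\log N)$, and accommodation of a logarithmic concave ``tent'' barrier $-h(k\wedge(N-k))$. The backbone is the renewal-based harmonic function $V$ for the walk $\{S_k\}$ killed on entry to $(-\infty,0)$, satisfying the martingale identity $\bE_{\lambda^*}[V(y+S_1);\,y+S_1\ge 0]=V(y)$ for $y\ge 0$ and growing linearly, $V(y)/y\to 1$; the variant $V^x$ in \eqref{eq:ballot-Sk-limit} absorbs the $x$-dependence of the first step. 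Exponential moments of $S_1$ are uniform in $x\in[0,1)$, which delivers all the claimed uniformities.

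For \eqref{eq:ballot-Sk-upper} and \eqref{eq:ballot-Sk-lower}, I would proceed in the standard way. The upper bound comes from conditioning at the midpoint $k=\lfloor N/2\rfloor$, applying the one-sided ballot estimate $\bP(\min_{k\le n}S_k\ge -y)\le CV(y)/\sqrt n$ to each half (the right half after time reversal, which preserves the random-walk structure up to integrable corrections), and localizing $S_N^{(N)}\in[z,z+a]$ via a local limit theorem. The drift perturbation is handled by coupling $S_k^{(N)}$ with $S_k$: since $|kd_N|\le c_0\log N=o(\sqrt N)$, both the ballot probability and the local CLT are stable under this shift up to multiplicative constants. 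The lower bound is obtained by an explicit construction: lift the walk to height $\sqrt N$ in $O(1)$ steps (using positive density of jumps), run a meander in the bulk, and land in $[0,a]$ via the local CLT. For the asymptotic identity \eqref{eq:ballot-Sk-limit}, I would invoke the Denisov--Wachtel / Caravenna--Chaumont functional limit theorem for random walks conditioned to stay positive: the rescaled trajectory converges to a Brownian meander, and a uniform local limit theorem at the endpoint yields the product $V^x(y)\int_z^{z+a}F(w)V(w)\,dw$ with universal constant $\beta^*$, up to a $(1+o(1))$ error contributed by the drift perturbation.

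The main obstacle is the concave barrier estimates \eqref{eq:ballot-Sk-limsup} and \eqref{eq:ballot-Sk-concave}. My strategy is to split the time axis into an initial segment $[0,N^\alpha]$, a bulk $[N^\alpha,N-N^\alpha]$ (for some small $\alpha>0$), and a terminal segment $[N-N^\alpha,N]$. On the bulk, a path conditioned on the flat barrier $S_k^{(N)}\ge 0$ and on endpoints near $y$ and $[z,z+a]$ typically lives at height $\gtrsim N^{1/2-\epsilon}$, which dwarfs the logarithmic bump $h(k)\le c_1\log(k+1)$; hence the concave barrier is automatically satisfied there, costing only a $(1+\delta_{y\wedge z})$ multiplicative error with $\delta_m\to 0$ as $m\to\infty$. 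On the initial segment, the $y^{1/10}$ slack in \eqref{eq:ballot-Sk-limsup} is chosen below $y$ but well above $h(N^\alpha)$, guaranteeing that the buffer is not exhausted. On the terminal segment, an analogous analysis produces the factor $z+h(N-j)$ in \eqref{eq:ballot-Sk-concave}, reflecting the entropic cost of arriving at level $z$ while respecting the tilted barrier over the final $N-j$ steps. Combining these three contributions with \eqref{eq:ballot-Sk-limit} and \eqref{eq:ballot-Sk-upper} yields both \eqref{eq:ballot-Sk-limsup} and \eqref{eq:ballot-Sk-concave}. The careful bookkeeping, which parallels \cite{BDZ} Lemma 2.3 but must be modified to accommodate the $x$- and $d_N$-dependence uniformly, is deferred to Appendix A.
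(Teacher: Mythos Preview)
Your proposal is broadly correct but takes a heavier route than the paper. The paper does not re-derive the ballot/local-limit machinery from scratch; instead it treats the case $x=0$, $F=\one_{[z,z+a]}$ as a black box from \cite{BDZ} (Lemmas~2.2--2.3), and then reduces everything else to that base case by two elementary moves. First, for general $x\in[0,1)$ it applies the strong Markov property \emph{after one step}: since the increments $T_k-T_{k-1}$ for $k\ge2$ are i.i.d.\ with the $\bP_{\lambda^*}^0$-law, conditioning on $S_1$ turns the problem into the $x=0$ case with $N-1$ steps and a shifted starting height $w=y+S_1^{(N)}$; the bounds \eqref{eq:ballot-Sk-upper}--\eqref{eq:ballot-Sk-lower} then follow from moment control of $S_1$, and \eqref{eq:ballot-Sk-limit} from bounded convergence (this is precisely where $V^x$ arises). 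Second, general Lipschitz $F$ is handled by a Riemann-sum approximation against indicators. For \eqref{eq:ballot-Sk-concave}, rather than a three-segment decomposition, the paper uses the single inequality $h(k\wedge(N-k))\le 2h(k\wedge(j-k))+(k/j)h(N-j)$ (borrowed from \cite[eq.~(107)]{BDZ}) to absorb the barrier offset into a modified drift $d_j^{(N)}:=|d_N|+h(N-j)/j$, which still satisfies $d_j^{(N)}=O(\log j/j)$ for $j\ge N/2$, and then applies the already-established $j=N$ case.

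Your midpoint-conditioning/time-reversal argument for \eqref{eq:ballot-Sk-upper}--\eqref{eq:ballot-Sk-lower} and your appeal to meander limit theorems for \eqref{eq:ballot-Sk-limit} essentially reproduce what \cite{BDZ} already contains, so you are doing more work than necessary. Your three-segment strategy for the concave barrier is the standard intuition and would work, but the paper's reduction via the drift-absorption trick is both shorter and more precise about where the factor $z+h(N-j)$ comes from. Practically, the paper's modular approach isolates exactly what is new here (the one-step $x$-dependence and the Lipschitz extension) from what is inherited, which keeps the appendix to a page rather than a full rederivation.
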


While Lemma~\ref{ballot:Sk} contains most of the estimates we need to prove Theorem~\ref{mainthm:law}, we will occasionally need versions of \eqref{eq:ballot-Sk-upper} and \eqref{eq:ballot-Sk-lower} for $\{Y_t\}$. We end this section by providing these results.

\begin{lemma}\label{ballot}
Let $t\mapsto q_t$ be a continuous function such that $q_t\ge\epsilon>0$ and $0<v^*-q_t<c_0\log t/t$ for some $\epsilon,c_0>0$, and define
\[
E_{t,y,z}:=\left\{y-\big(Y_t-q_tt)\in[z,z+1],\;\max_{s\le t}\big(Y_s-q_ts\big) \le y\right\}
\]
Then there exists a constant $C>0$ such that
\begin{equation}\label{eq:ballot-Yt-upper}
\bP_{\lambda^*}^x\Big(E_{t,y,z}\Big) \le \frac{Cy(z\vee1)}{t^{3/2}}
\end{equation}
for all $t\ge1$, $y\ge1$, $z\ge0$, and $x\in[0,1)$, and
\begin{equation}\label{eq:ballot-Yt-lower}
\bP_{\lambda^*}^x\Big(E_{t,y,0}\Big) \ge \frac{y}{Ct^{3/2}}
\end{equation}
for all $t\ge1$, $1\le y\le \sqrt{t}$ and $x\in[0,1)$.
\end{lemma}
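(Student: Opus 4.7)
The plan is to reduce the continuous-time ballot estimates for $\{Y_t\}$ to the discrete estimates of Lemma~\ref{ballot:Sk} for the renewal random walk $\{S_k\}$, by decomposing at an appropriate hitting time $T_N$. The underlying structural fact is that $Y_{T_k}=k$ (by continuity), and by 1-periodicity of $\phi(\cdot,\lambda^*)$ the shifted post-$T_N$ process $\{Y_{T_N+s}-N\}_{s\ge 0}$, conditional on $\cF_{T_N}$, has the law of $\{Y_s\}$ under $\bP^0_{\lambda^*}$.

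For the upper bound \eqref{eq:ballot-Yt-upper}, on $E_{t,y,z}$ the integer $\lfloor Y_t\rfloor$ takes at most two values since $Y_t$ lies in an interval of length $1$; fix one such value $N$, and note that $Y_t\ge N$ forces $T_N\le t$. Setting $R:=t-T_N\ge 0$, the strong Markov property at $T_N$ factorises the event into a pre-$T_N$ ballot-type event for $S_k$ and an independent post-$T_N$ event for $Y'_R:=Y_{T_N+R}-N$ whose probability under $\bP^0_{\lambda^*}$ is $O(1)$ for $R=O(1)$ and decays exponentially for $R$ large by the LDP \eqref{point:ldp}. Evaluating the $Y$-barrier at $s=T_k$ gives
\[
S_k+\frac{y}{q_{T_k}}\ \ge\ \frac{k(v^*-q_{T_k})}{v^*q_{T_k}}\ \ge\ 0.
\]
I would then apply Lemma~\ref{ballot:Sk} with $d_N:=(v^*-q_t)/(v^*q_t)$, which satisfies $|d_N|\le c_0\log N/N$ (up to a constant factor) by the hypothesis on $q$. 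This choice aligns the ``curved'' barrier with the ballot, so that the event $\{T_N=t-R\}$ translates into $S^{(N)}_N$ concentrated near $-R$, rather than displaced by $O(\log t)$ as would occur with the straight-line choice $d_N=0$. The remaining discrepancy $|kd_N-k(v^*-q_{T_k})/(v^*q_{T_k})|=O(|d_{T_k}-d_t|\,T_k)$ is absorbed into a concave correction $h(k\wedge(N-k))\le c_1\log(k\wedge(N-k)+1)$, permitting invocation of \eqref{eq:ballot-Sk-concave}. Integrating the resulting bound against the post-$T_N$ density of $Y'_R$ and summing over the $O(1)$ choices of $N$ produces the claimed bound $Cy(z\vee 1)/t^{3/2}$.

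For the lower bound \eqref{eq:ballot-Yt-lower}, I would construct an explicit favorable sub-event $G\subseteq E_{t,y,0}$ by choosing $N$ close to $q_tt+y$ and requiring: (i)~an inflated discrete barrier $S^{(N)}_k+(y-c)/v^*\ge 0$ for all $k\le N$, with $c>0$ a fixed constant depending on the oscillation of $q\cdot s$ on unit intervals, chosen so that the barrier at hitting times forces $k+1\le y+q_ss$ for $s\in[T_k,T_{k+1})$ and hence the continuous barrier $Y_s\le y+q_ss$; (ii)~$T_N\in[t-1,t]$, by specifying $S^{(N)}_N$ in a small unit interval; and (iii)~a short-range condition on $Y'_{t-T_N}$ ensuring $Y_t$ lands in the target $[q_tt+y-1,q_tt+y]$. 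The lower bound \eqref{eq:ballot-Sk-lower} (applied with the inflated effective $y$, valid by the hypothesis $y\ge 1$) yields (i)--(ii) with probability $\gtrsim y/t^{3/2}$, while (iii) has probability bounded below uniformly in the starting position, by 1-periodicity of $\phi$ and local heat-kernel estimates for $Y'$.

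The main obstacle is the time-dependent nature of the $Y$-barrier: a naive straight-line reduction introduces a spurious factor of $\log t$ in the upper bound, since the $O(\log t)$ gap between $v^*t$ and $q_tt$ would displace the terminal value of $S_N$ away from $0$. Correcting this requires aligning the discrete ballot with the curved barrier via a carefully chosen $d_N$ and absorbing the fluctuations of $q_ss$ into the concave-$h$ correction, which is precisely why the flexible form \eqref{eq:ballot-Sk-concave} is built into Lemma~\ref{ballot:Sk}. For the lower bound, a secondary subtlety---that the discrete hitting-time barrier does not by itself imply the continuous barrier between hitting times---is resolved by the margin inflation in (i), compatibly with the hypothesis $y\ge 1$.
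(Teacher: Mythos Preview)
Your overall strategy matches the paper's: reduce to Lemma~\ref{ballot:Sk} via the hitting times $T_k$ and the strong Markov property, choosing $d_N$ so as to absorb the $O(\log t)$ gap between $v^*$ and $q_t$. The lower-bound sketch (inflated discrete barrier, $T_N$ close to $t$, short-range post-$T_N$ control) is essentially the paper's argument.

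However, you have misread the barrier. In $E_{t,y,z}$ the linear term is $q_t s$ with $q_t$ \emph{fixed at the terminal time} $t$, not $q_s s$. Evaluating at $s=T_k$ therefore gives exactly $k - q_t T_k \le y$, i.e.\ $y/q_t + (S_k - k\hat d_t) \ge 0$ with $\hat d_t := 1/q_t - 1/v^*$ constant in $k$, and there is no residual ``discrepancy $|kd_N - k(v^*-q_{T_k})/(v^*q_{T_k})|$'' to absorb. Accordingly the paper uses only \eqref{eq:ballot-Sk-upper} and \eqref{eq:ballot-Sk-lower} here, never the concave-$h$ estimate \eqref{eq:ballot-Sk-concave}; your ``main obstacle'' paragraph manufactures a difficulty (fluctuations of $q_s s$) that does not arise. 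The argument still goes through with your superfluous correction, but it rests on a misreading and should be simplified.

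There is also a minor structural difference in the upper bound. The paper decomposes by the \emph{highest} integer level $n(j)$ reached, with $T_{n(j)}\le t<T_{n(j)+1}$, summing over $j\le\lfloor z\rfloor+1$ and $\ell=\lfloor t-T_{n(j)}\rfloor$ and extracting exponential decay in both $j$ (from $\bP_{\lambda^*}(\min_s Y_s\le -(j-1))$) and $\ell$ (from $\bP_{\lambda^*}(T_1>\ell)$) in the post-$T_{n(j)}$ conditional probability. Your decomposition by $N=\lfloor Y_t\rfloor$ (at most two values) with exponential decay of $\bP^0_{\lambda^*}(Y_R\in[0,1))$ in $R=t-T_N$ also works, though you should explicitly dispose of the regime $|y-z|>\epsilon t$ via the LDP so that $N\asymp t$ and $|d_N|\lesssim \log N/N$ is justified.
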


\begin{proof}
We begin with \eqref{eq:ballot-Yt-upper}. Fix $\epsilon\in(0,v^*/2)$ and assume $|y-z|\le\epsilon t$, since if not, a basic large deviations estimate (c.f. Lemma~\ref{ldp}) yields a sharper bound. For $0\le j\le \lfloor z\rfloor + 1$ and $0\le\ell\le\lfloor t\rfloor$, define $n(j):=\lfloor q_tt+y-z\rfloor + j$  and
\begin{equation*}
B_{j,\ell}:=\Big\{t-T_{n(j)}\in[\ell,\ell+1), \, T_{n(j)+1}>t\Big\}.
\end{equation*}
By the strong Markov property, one has
\begin{multline}\label{ballot:eq1}
\bP_{\lambda^*}^x\Big(E_{t,y,z}\cap B_{j,\ell}\Big) \le
\bE_{\lambda^*}^x\bigg[ \bP_{\lambda^*}^x\Big(y-\big(Y_t-q_tt\big)\in[z,z+1],T_{n(j)+1}>t\,\big|\, \cF^Y_{T_{n(j)}}\Big) \, ; \\
t-T_{n(j)}\in[\ell,\ell+1), \max_{k\le n(j)}\big(k - q_tT_k\big) \le y\bigg].
\end{multline}
Let $\widehat d_t:=\frac1{q_t}-\frac1{v^*}$, observing that
\[
0<\widehat d_t \le c_1\log t/t\le c_2\log n(j)/n(j)
\]
for constants $c_1,c_2>0$ depending on $\epsilon$ and $c_0$, but not on $j$ (since $n(j) \le (v^*+\epsilon)t$). Thus, defining $\widehat S_k^{t}:= S_k-k\widehat d_t$, \eqref{eq:ballot-Sk-upper} of Lemma~\ref{ballot:Sk} implies
\begin{equation}\label{ballot:eq2}
    \bP_{\lambda^*}^x\Big(p_1+S_{n(j)}^{(t)} \in [p_2,p_2+a],\;\min_{k\le n(j)}\big(p_1+S_{n(j)}\big) \ge 0\Big) \le \frac{C(p_1\vee1)(p_2\vee1)}{n(j)^{3/2}}
\end{equation}
for all $p_1,p_2>0$, where the constant $C>0$ does not depend on $j$.
Moreover, one has $k-q_tT_k=-q_t\widehat S_k^{(t)}$, and
\[
\big\{t-T_{n(j)}\in[\ell,\ell+1)\big\} = \left\{y - \big(n(j)-q_tT_{n(j)}\big) \in (L,L+q_t]\right\},
\]
where $L:=z-j-q_t(\ell+1)+\{q_tt+y-z\}$. Hence, by \eqref{ballot:eq2}, we have
\begin{equation}\label{ballot:eq3}
    \bP_{\lambda^*}^x\left(t-T_{n(j)}\in[\ell,\ell+1), \max_{k\le n(j)}\big(k - q_tT_k\big) \le y\right) \!\le \frac{C(y\vee1)(L\vee1)}{n(j)^{3/2}} \le\! \frac{C(y\vee1)(z\vee1)}{t^{3/2}},
\end{equation}
where the second inequality follows since $n(j)\ge (q_t-\epsilon)t$ and $q_t\to v^*$.

Now notice that, on the event $\big\{t-T_{n(j)}\in[\ell,\ell+1)\big\}$, one has
\begin{equation*}
\bP_{\lambda^*}^x\Big(T_{n(j)+1}>t \, \big| \, \cF^Y_{T_n(j)}\Big) \le \bP_{\lambda^*}\Big(T_1>\ell\Big) \le Ce^{-\delta\ell}
\end{equation*}
and 
\begin{equation*}
    \bP_{\lambda^*}^x\Big(y-\big(Y_t-q_tt\big) \in [z,z+1]\,\big|\,\cF^Y_{T_{n(j)}}\Big) \le \bP_{\lambda^*}\Big(\min_{s\ge0}Y_s\le -(j-1)\Big) \le Ce^{-\delta j}
\end{equation*}
for appropriate $\delta>0$. This last inequality comes from the fact that, if $\tau_{j-1}$ is the first time $\{Y_s\}$ hits $-(j-1)$, then $\tau_{j-1}$ is the sum of $j-1$ IID copies of $\tau_1$, which is finite with probability $\rho<1$ since $Y_t/t\to v^*>0$.

Combining the previous two displays, one deduces
\begin{equation}\label{ballot:eq4}
    \bP_{\lambda^*}^x\left(\Big(y-\big(Y_t-q_tt\big)\in[z,z+1],T_{n(j)+1}>t\,\big|\, \cF^Y_{T_{n(j)}}\Big)\right) \le Ce^{-\delta(j\vee\ell)}\le Ce^{-\delta(j+\ell)/2}
\end{equation}
on the event $\big\{t-T_{n(j)}\in[\ell,\ell+1)\big\}$. Plugging \eqref{ballot:eq4} back into \eqref{ballot:eq1} along with \eqref{ballot:eq3} and summing over $j$ and $\ell$, we conclude
\begin{align*}
    \bP_{\lambda^*}^x\Big(E_{t,y,z}\Big) &\le \sum_{j=0}^{\lfloor z\rfloor + 1\rfloor}\sum_{\ell=0}^{\lfloor t\rfloor}\bP_{\lambda^*}\Big(E_{t,y}\cap B_{j,\ell}\Big) \\
    &\le C\sum_{j=0}^{\lfloor z\rfloor + 1\rfloor}\sum_{\ell=0}^{\lfloor t\rfloor} \frac{e^{-\delta(j+\ell)/2}(y\vee1)(z\vee1)}{t^{3/2}} \\
    &\le \frac{C(y\vee1)(z\vee1)}{t^{3/2}},
\end{align*}
proving \eqref{eq:ballot-Yt-upper}.

We are left with proving \eqref{eq:ballot-Yt-lower}. Let $N:=\lfloor q_tt+y-2\rfloor$, and consider the event
\begin{align*}
G_{t,y}:&=\Big\{t-T_N\in\left[\tfrac1{2q_t},\tfrac1{q_t}\right], \, \max_{k\le N}\big(k - q_tT_k\big) \le y - 1\Big\} \\
&=\left\{\frac{y-1}{q_t} + \widehat S_N^{(t)} \in \{q_tt+y\}+[0,\tfrac32],\,\min_{k\le N}\left(\frac{y-1}{q_t} + \widehat S_k^{(t)}\right) \ge 0 \right\},
\end{align*}
where $\widehat S_k^{(t)}$ is as before. On $G_{t,y}$, for $k\le N$ and $s\in[T_{k-1}, T_k]$ one has
\[
Y_s-q_ts \in k - q_tT_{k-1} = 1 + \big(k-1 - q_tT_{k-1}\big) \le y,
\]
and hence by the strong Markov property,
\begin{equation}\label{ballot:eq5}
    \bP_{\lambda^*}^x\Big(E_{t,y,0}\Big) \ge\bE_{\lambda^*}^x\left[ \bP_{\lambda^*}^x\Big(y-\big(Y_t-q_tt\big) \in [0,1],\,\max_{T_N\le s\le t}\big(Y_s-q_ts\big)\le y\,\big|\,\cF_{T_N}^Y\Big)\,;\,G_{t,y}\right].
\end{equation}
Set $\tilde L:=q_tt+y-N$, which is in $[1,2]$. On the event $\big\{t-T_N\in\left[\frac1{2q_t},\frac1{q_t}\right]\big\}$, we have
\begin{align*}
    \bP_{\lambda^*}^x\Big(y-&\big(Y_t-q_tt\big) \in [0,1],\,\max_{T_N\le s\le t}\big(Y_s-q_ts\big)\le y\,\big|\,\cF_{T_N}^Y\Big) \\
    &\ge\min_{s\in\left[\frac1{2q_t},\frac1{q_t}\right]}\bP_{\lambda^*}\Big(\tilde L -\big(Y_s-q_ts\big) \in [0,1],\,\max_{u\le s}\big(Y_u-q_tu\big) \le \tilde L\Big) \\
    &\ge C\min_{s\in\left[\frac1{2q_t},\frac1{q_t}\right]}\bP\Big(\tilde L -\big(B_s-q_ts\big) \in [0,1],\,\max_{u\le s}\big(B_u-q_tu\big) \le \tilde L\Big),
\end{align*}
where the latter inequality follows by applying Lemma~\ref{tilt} and bounding the Radon-Nikodym derivative from below. Using elementary tools for Brownian motion, this last probability can be bounded below by a constant $C>0$. Thus, plugging into \eqref{ballot:eq5}, we see
\begin{equation}\label{ballot:eq6}
\bP_{\lambda^*}^x\Big(E_{t,y,0}\Big) \ge C\bP_{\lambda^*}^x\Big(G_{t,y}\Big).
\end{equation}
Finally, since $0<\widehat d_t \le c_1\log t/t$ for some $c_1>0$, \eqref{eq:ballot-Sk-lower} of Lemma~\ref{ballot:Sk} implies
\begin{equation*}
\bP_{\lambda^*}^x\Big(G_{t,y}\Big) \ge \frac{Cy}{t^{3/2}},
\end{equation*}
and combined with \eqref{ballot:eq6}, this implies \eqref{eq:ballot-Yt-lower} and completes the proof.
\end{proof}

\section{Estimates on the right tail of BBM}
\label{sec-righttail}
In this section, we state and prove several important estimates of $\P^x\Big(M_t>m_t+y\Big)$. To begin with, we focus on upper and lower bounds, but eventually we will need asymptotics as $t\to\infty$ followed by $y\to\infty$.

Throughout this section, we fix $q_t:=m_t/t$. Note that $q_t$   satisfies the conditions of Lemma~\ref{ballot}.

\subsection{Upper and lower bounds}
The main goal of this subsection is to show that $\bP^x\Big(M_t>m_t+y\Big)/ye^{-\lambda^*y}$ is bounded above and below by positive constants. We begin with the following. In analogy with \eqref{Tdef}, introduce for
$k\in \N$ and $v\in U$,
\begin{equation}\label{Tdefv}
T_k^{(v)}:=\inf\big\{t\ge0 \, : \, X_t^{(v)} \ge k\big\}.
\end{equation}
With this, recall the notation $\cN_{T_N}$, see the discussion in the beginning of Section \ref{sec:prelims_ldp}.
\begin{lemma}\label{upperbd:stop}
Define the event
\[
G_{N,z}:=\bigcup_{v\in\cN_{T_N}}\bigcup_{k\le N}\left\{z+\gamma(\lambda^*)\big(T_k^{(v)}-k/v^*\big)-\frac{3k}{2N}\log N+h\big(k\wedge(N-k)\big) < 0\right\},
\]
where $h(i):=3\log(i\vee1)$. Then there exist $C,\delta>0$ such that for all $N\ge 2$, $z\ge1$ and $x\in[0,1)$,
\begin{equation}\label{upperbd:eq}
\P^x\Big(G_{N,z}\Big) \le Cze^{-z}g_{N,\delta}(z),
\end{equation}
 where
\[
g_{N,\delta}(z)= \exp\left\{-\delta z\left(\frac{z}{N\log N}\wedge1\right)\right\}.
\]
\end{lemma}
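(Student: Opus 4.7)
The plan is to decompose $G_{N,z}$ by the first level at which a particle's ancestry violates the barrier defining $G_{N,z}$, apply the Many-to-One lemma at that stopping line together with the tilt of Lemma~\ref{tilt}, and conclude via the ballot estimate of Lemma~\ref{ballot:Sk} for the mean-zero random walk $\{\gamma(\lambda^*)S_k\}$ under $\bP_{\lambda^*}$. Concretely, for each $v\in\cN_{T_N}$ on $G_{N,z}$, let $k^\star(v)$ be the smallest $k\le N$ at which the defining expression is negative; $v$'s unique ancestor $u\in\cN_{T_k}$ then satisfies (a) $\gamma(\lambda^*)S_k^{(u)}<\alpha_k:=-z+\tfrac{3k}{2N}\log N-h(k\wedge(N-k))$ and (b) $\gamma(\lambda^*)S_{k'}^{(u)}\ge\alpha_{k'}$ for every $k'<k$. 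Since every $u\in\cN_{T_k}$ almost surely has a descendant in $\cN_{T_N}$ (its ancestry has only hit levels $<N$, and the sub-BBM rooted at $u$ drifts to $+\infty$), a union bound over $k$ and Markov's inequality give $\P^x(G_{N,z})\le\sum_{k=1}^N\E^x[\#\{u\in\cN_{T_k}:(a)\wedge(b)\}]$.

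Applying Lemma~\ref{lem:many-to-one} at $T_k$ followed by Lemma~\ref{tilt} (using Remark~\ref{tilt:stopped}, valid since $T_k$ is bounded above on event~(a)), the factor $e^{\int_0^{T_k}g}$ together with the Radon--Nikodym derivative produces $e^{\gamma(\lambda^*)T_k}$ under $\bP_{\lambda^*}$. Writing $\gamma(\lambda^*) T_k=\gamma(\lambda^*) S_k+\lambda^* k$ and using the boundedness of $\psi$, each summand becomes
\[
Ce^{\lambda^* x}\,\bE_{\lambda^*}^x\!\left[e^{\gamma(\lambda^*)S_k}\,\one_{(a)\wedge(b)}\right]\le Ce^{\lambda^* x}e^{\alpha_k}\,\bP_{\lambda^*}^x\!\big((a)\wedge(b)\big),
\]
with the inequality using $e^{\gamma(\lambda^*) S_k}\le e^{\alpha_k}$ on~(a); note $e^{\lambda^* x}=O(1)$ for $x\in[0,1)$. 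To bound $\bP_{\lambda^*}^x((a)\wedge(b))$, I would substitute $\tilde S_j:=\gamma(\lambda^*)S_j$ and shift to $\tilde S_j^{(N)}:=\tilde S_j-\tfrac{3j\log N}{2N}$ so that $|d_N|\le c_0\log N/N$ with $c_0=3/2$; conditions (a)--(b) then become a first-violation ballot event for $\tilde S^{(N)}$ with starting height $y=z\ge 1$ and concave barrier $h(k\wedge(N-k))$. For $k\ge N/2$, Lemma~\ref{ballot:Sk}~\eqref{eq:ballot-Sk-concave} applied at $j=k-1$, summed over the landing height $z'\ge 0$ of the ballot walk at time $k-1$ and combined with a one-step estimate for the $k$-th jump crossing below the barrier, delivers a ballot bound of order $Cz(h(N-k)+1)/N^{3/2}$ up to the jump factor. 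For $k<N/2$ the trivial bound $\bP\le 1$ is enough, since $\alpha_k$ is then already very negative.

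Multiplying by $e^{\alpha_k}$ and summing, the dominant contribution comes from $k=N-j$ with $j=O(1)$, where $e^{\alpha_k}\sim e^{-z}N^{3/2}/j^3$ and the ballot bound is $\sim z/N^{3/2}$; the resulting sum $\sum_j ze^{-z}/j^3$ collapses to $O(ze^{-z})$, while the $k<N/2$ contribution sums to $O(e^{-z})$. The additional factor $g_{N,\delta}(z)$ arises by sharpening the landing/density estimate on (a): for moderate $z\lesssim\sqrt{N\log N}$, a local CLT for $\tilde S_k$ at a target of height $-z+O(\log N)$ contributes $\exp(-\delta z^2/(N\log N))$, while for $z\gtrsim N$ the large-deviations estimate from Lemma~\ref{ldp} contributes $\exp(-\delta z)$, combining into $g_{N,\delta}(z)$. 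The main technical obstacle is this ballot step: precisely formulating the first-violation event as an application of Lemma~\ref{ballot:Sk}, tracking the one-step jump probability with its correct dependence on $h$, and ensuring the resulting bounds are uniform in $x\in[0,1)$ so that the summation over $k$ yields exactly the claimed $Cze^{-z}g_{N,\delta}(z)$ bound.
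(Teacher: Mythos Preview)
Your strategy matches the paper's: decompose $G_{N,z}$ by the first level of violation, apply Many-to-One at the stopping line $T_k$, tilt via Lemma~\ref{tilt}, and control the resulting $\bP_{\lambda^*}^x$-probability via ballot and local-CLT estimates. The paper streamlines your execution in two places. First, rather than summing over landing heights at time $k-1$ and appending a one-step jump estimate, the paper observes directly that since $T_{k+1}-T_k>0$ the one-step increment of $W^{(N)}$ is bounded below, so on the event that $k$ is the last good time the walk at time $k$ is forced into a fixed window $[0,c_0]$ above the barrier; this collapses your sum over $z'$ to a single ballot application (and is in fact the same observation your one-step estimate would uncover, just used up front). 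Second, the paper produces the $g_{N,\delta}$ factor through a single case split --- the ballot bound \eqref{eq:ballot-Sk-upper} for $k\ge N/2$ and $z\le\sqrt{N\log N}$ (where $g_{N,\delta}(z)\ge e^{-\delta}$ is free), and a local CLT bound $\Psi_{k,N}^x(z)\le Ck^{-1/2}e^{-\delta' z^2/k}\le Ck^{-3/2}g_{N,\delta}(z)$ for $k<N/2$ or $z>\sqrt{N\log N}$ --- rather than first proving $O(ze^{-z})$ and then sharpening. Your two-stage route works, but note that the trivial bound $\bP\le 1$ for $k<N/2$ only yields $O(e^{-z})$, which is \emph{not} dominated by $Cze^{-z}g_{N,\delta}(z)$ once $z$ is large (e.g.\ $z\gg N\log N$ gives $g_{N,\delta}(z)=e^{-\delta z}$); so the local-CLT/large-deviations sharpening you describe must replace, not merely supplement, that trivial bound in the $k<N/2$ range.
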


\begin{remark}
For $x=0$, this is simply Lemma $2.4$ in [BDZ] for the Branching Random Walk $\{T_k^{(v)}\}_{k\ge1,v\in\cN_{T_k}}$. However, this Branching Random Walk does not satisfy the assumptions of that paper (in particular, one has $\E\left[\sum_{v\in\cN_{T_1}}1\right] = +\infty$), and the uniformity in $x\in[0,1)$ may not be immediately obvious, so we provide a full proof.
\end{remark}

\begin{proof}
Let $W_k^{(N,v)}:=\gamma(\lambda^*)\big(T_k^{(v)}-k/v^*\big)-\frac{3k}{2N}\log N$, and let $W_k^{(N)}$ be its analogue when $T_k^{(v)}$ is replaced by $T_k$. Also set $\Phi_{k,N}(z)=z+h\big(k\wedge(N-k)\big)$. By a simple union bound and the Many-to-One Lemma (Lemma~\ref{lem:many-to-one}), one has
\begin{align}
    \P^x\Big(G_{N,z}\Big)
    &\le \sum_{k=0}^{N-1}\P^x\left(\bigcup_{v\in\cN_{T_k}}\Phi_{k+1,N}(z) + W_{k+1}^{(N,v)} <0, \min_{j\le k}\Big(\Phi_{j,N}(z) + W_j^{(N,v)}\Big) \ge 0\right) \nonumber \\
    \label{upperbd:eq1}&\le \sum_{k=0}^{N-1}\bE^x\left[e^{\int_0^{T_k}g(B_s)ds}\,;\, \Phi_{k+1,N}(z) + W_{k+1}^{(N)} <0, \min_{j\le k}\Big(\Phi_{j,N}(z) + W_j^{(N)}\Big) \ge 0 \right].
\end{align}

Observe that $\{\Phi_{k+1,N}(z) + W_{k+1}^{(N)}\} \subseteq \{T_{k+1}\le F_{N,k}(z)\}$ for some deterministic function $F_{N,k}$, so by Remark~\ref{tilt:stopped} we may apply the change of measure in Lemma~\ref{tilt} to deduce
\begin{align}
\bE^x&\left[e^{\int_0^{T_k}g(B_s)ds}\,;\, \Phi_{k+1,N}(z) + W_{k+1}^{(N)} <0, \min_{j\le k}\Big(\Phi_{j,N}(z) + W_j^{(N)}\Big) \ge 0 \right] \nonumber \\
&\le e^{-\Phi_{k+1,N}(z)+\frac{3(k+1)}{2N}\log N}\bP_{\lambda^*}^x\left(\Phi_{k+1,N}(z) + W_{k+1}^{(N)} <0, \min_{j\le k}\Big(\Phi_{j,N}(z) + W_j^{(N)}\Big) \ge 0\right) \nonumber \\
\label{upperbd:eq2}&\le e^{-\Phi_{k+1,N}(z)+\frac{3(k+1)}{2N}\log N}\bP_{\lambda^*}^x\left(\Phi_{k,N}(z)+W_k^{(N)} \in [0,c_0],\, \min_{j\le k}\Big(\Phi_{j,N}(z) + W_j^{(N)}\Big) \ge 0\right)
\end{align}
for some constant $c_0>0$; this last inequality follows since $T_{k+1}>T_k$ and $\Phi_{k+1,N}(z)-\Phi_{k,N}(z)$ is uniformly bounded. It is straightforward to see that
\begin{equation}\label{upperbd:eq3}
e^{-\Phi_{k+1,N}(z)+\frac{3(k+1)}{2N}\log N} \le (k+1)^{3/2}e^{-\Phi_{k+1,N}(z)} \le \frac{Ce^{-z}(k\vee1)^{3/2}}{\big((k\vee1)\wedge(N-k)\big)^3},
\end{equation}
and so we are left with bounding
\[
\Psi^x_{k,N}(z):=\bP_{\lambda^*}^x\left(\Phi_{k,N}(z)+W_k^{(N)} \in [0,c_0],\, \min_{j\le k}\Big(\Phi_{j,N}(z) + W_j^{(N)}\Big) \ge 0\right).
\]

Notice that $W_k^{(N)}/\gamma(\lambda^*)$ is precisely equal to $S_k^{(N)}$ when $d_N=- \frac{3}{2\gamma(\lambda^*)}\frac{\log N}{N}$, so we may apply \eqref{eq:ballot-Sk-upper} of Lemma~\ref{ballot:Sk} to deduce
\begin{equation*}
\Psi^x_{k,N}(z) \le \frac{C\big(z+h(N-k)\big)}{(k\vee1)^{3/2}} \le \frac{Cz(N-k)}{(k\vee1)^{3/2}}
\end{equation*}
for $N/2 \le k \le N$. Notice this bound remains true if we multiply the right hand side by $g_{N,\delta}(z)$ provided $z\le\sqrt{N\log N}$. If either $z>\sqrt{N\log N}$ or $k< N/2$, we instead use a local central limit theorem to write
\begin{equation*}
\Psi^x_{k,N}(z) \le \frac{Ce^{-\delta' z^2/k\vee1}}{(k\vee1)^{1/2}} \le \frac{Cg_{N,\delta}(z)}{(k\vee1)^{3/2}}
\end{equation*}
for suitable $\delta,\delta'>0$. Combining the preceding two displays with \eqref{upperbd:eq3} and plugging into \eqref{upperbd:eq2}, we find
\begin{multline*}
\bE^x\left[e^{\int_0^{T_k}g(B_s)ds}\,;\, \Phi_{k+1,N}(z) + W_{k+1}^{(N)} <0, \min_{j\le k}\Big(\Phi_{j,N}(z) + W_j^{(N)}\Big) \ge 0 \right] \\
\le \frac{Cze^{-z}g_{N,\delta}(z)}{\big((k\vee1)\wedge(N-k)\big)^{2}}.
\end{multline*}
Plugging into \eqref{upperbd:eq1} and summing over $k$, we deduce \eqref{upperbd:eq} and complete the proof.
\end{proof}

Our main application of Lemma~\ref{upperbd:stop} in this section is the following.

\begin{corollary}\label{upperbd:cor}
There exist $C,\delta>0$ such that 
for all $t\ge2$, $y\ge1$ and $x\in[0,1)$,
\begin{equation}\label{upperbd:cor:eq}
    \P^x\Big(M_t>m_t+y\Big) \le Cye^{-\lambda^*y}g_{t,\delta}(y).
\end{equation}
\end{corollary}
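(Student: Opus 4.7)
The plan is to bound $\P^x(M_t > m_t + y)$ by decomposing on the barrier event $G_{N,z}$ of Lemma~\ref{upperbd:stop}, with $N := \lceil m_t + y\rceil$ and $z := \lambda^* y + C_0$ for a suitable constant $C_0$. The identity $\gamma(\lambda^*) = \lambda^* v^*$ makes this choice calibrated so that the barrier in $G_{N,z}$ captures exactly the paths that could plausibly reach height $N$ by time $t$. (For bounded $y$ the right-hand side of \eqref{upperbd:cor:eq} is bounded below, making the bound trivial, so I may assume $y$ large enough that $z\ge 1$.) Then
\[
\P^x(M_t > m_t + y) \le \P^x(G_{N,z}) + \P^x\big(\{M_t > m_t + y\}\cap G_{N,z}^c\big).
\]
Lemma~\ref{upperbd:stop} bounds the first term by $Cze^{-z}g_{N,\delta}(z)$, which under $z\asymp \lambda^* y$ and $N\asymp t$ is at most $Cy\,e^{-\lambda^* y}g_{t,\delta'}(y)$ for some $\delta'>0$; one checks routinely that the substitutions $z \mapsto \lambda^* y$, $N \mapsto t$ cost only constants inside $g_{\cdot,\cdot}$.

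For the second term I use the first-moment method with a barrier. On $G_{N,z}^c$ every $u\in\cN_{T_N}$ satisfies the single-particle version of the barrier (the hitting times $T_k^{(u)}$ stay above the prescribed curve for $k\le N$); any descendant $v\in\cN_t$ with $X_t^{(v)} > m_t + y$ inherits the property from its ancestor $u \in \cN_{T_N}$ since $T_k^{(v)} = T_k^{(u)}$ for $k\le N$. Markov's inequality, followed by Lemma~\ref{lem:many-to-one} and the tilt of Lemma~\ref{tilt} (absorbing the bounded $\psi$-ratio into $C$), gives
\[
\P^x\big(\{M_t > m_t+y\}\cap G_{N,z}^c\big) \le C e^{\gamma(\lambda^*)t - \lambda^*(m_t+y)}\, \bP_{\lambda^*}^x\big(Y_t>m_t+y,\ \tilde B\big),
\]
where $\tilde B$ is the barrier event transported to the tilted trajectory. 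The prefactor simplifies via $\gamma(\lambda^*)t-\lambda^* m_t = \tfrac32\log t$ to $Ct^{3/2}e^{-\lambda^* y}$.

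It remains to show $\bP_{\lambda^*}^x(Y_t>m_t+y,\tilde B) \le Cy\,g_{t,\delta}(y)/t^{3/2}$. I slice $\{Y_t > m_t+y\}$ into unit intervals $\{Y_t\in m_t+y+[j,j+1)\}$, $j\ge 0$, pulling the factor $e^{-\lambda^* j}$ out of each tilt computation before summing. Using the identification $\{Y_t > k\} = \{T_k < t\}$, each slice localizes $S_N^{(N)} = S_N - \tfrac{3}{2\gamma(\lambda^*)}\log N$ to a bounded interval around an offset $\tilde y \asymp y/v^*$, while $\tilde B$ transforms into precisely the concave barrier appearing in \eqref{eq:ballot-Sk-concave}. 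That estimate gives an upper bound of order $y(j+1)/t^{3/2}$ per slice, and summing against the weights $e^{-\lambda^* j}(j+1)$ produces $Cy/t^{3/2}$, whence the main $Cy\,e^{-\lambda^* y}$ bound.

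The main obstacle is extracting the additional $g_{t,\delta}(y)$ factor in the regime where it is non-trivial. For $y\lesssim\sqrt{t\log t}$ the factor is close to $1$ and emerges from the local CLT underlying \eqref{eq:ballot-Sk-concave}, provided one tracks carefully how the small drift $d_N$ concentrates the walk. For $y\gtrsim\sqrt{t\log t}$ the factor becomes $e^{-\delta y}$, which I would obtain either by splitting the random-walk computation into Gaussian and large-deviations regimes (paralleling the proof of Lemma~\ref{upperbd:stop} itself), or by invoking Lemma~\ref{ldp} directly on $\{Y_t>m_t+y\}$, since $(m_t+y)/t$ is then a strict deviation from $v^*=\gamma'(\lambda^*)$. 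Handling the slicing, the prefactors, and the $g_{t,\delta}$ decay simultaneously without losing the sharp linear $y$ prefactor is the technical heart of the argument.
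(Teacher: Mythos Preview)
Your decomposition is unnecessary: the paper's proof is a one-line inclusion. With $N = \lfloor m_t + y\rfloor$ (floor, not ceiling) and $z = \lambda^*(y - c_0)$ for a suitable constant $c_0>0$, one has $\{M_t > m_t + y\} \subseteq G_{N,z}$ outright, so your second term $\P^x(\{M_t>m_t+y\}\cap G_{N,z}^c)$ is identically zero. Indeed, if $M_t > m_t + y \ge N$ then some particle $v$ has $T_N^{(v)} < t$; substituting $k = N$ in the definition of $G_{N,z}$ (where $h(0)=0$) and using $\gamma(\lambda^*) = \lambda^* v^*$ together with $\lambda^* N \ge \lambda^*(y - c_0) + \gamma(\lambda^*)t - \tfrac32\log N$ (valid for $y\le\epsilon t$), one checks that $T_N^{(v)} < t$ forces the barrier inequality at the terminal step. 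The remaining regime $y>\epsilon t$ is disposed of by a crude first-moment bound without barrier. The corollary then follows immediately from Lemma~\ref{upperbd:stop}, and the $g_{t,\delta}$ factor you flag as the ``main obstacle'' is delivered for free by that lemma.

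Your instinct to calibrate $z$ via $\gamma(\lambda^*) = \lambda^* v^*$ is exactly right, but the calibration is stronger than you realize: it makes the complement empty, not merely small. The first-moment-plus-barrier computation you sketch for $G_{N,z}^c$ would, if carried out, essentially reprove the content of Lemma~\ref{upperbd:stop} from scratch, and the delicate extraction of the Gaussian-versus-large-deviations decay you worry about is moot once there is nothing left to bound.
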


\begin{proof}
Fix $\epsilon\in(0,v^*/2)$. If $y>\epsilon t$, one can use a simple first moment estimate (without barrier) to obtain stronger results, so assume $y\le \epsilon t$. Let $N:=\lfloor m_t + y\rfloor$. Observe that
\[
\{M_t>m_t+y\} \subseteq \bigcup_{v\in\cN_{T_N}}\big\{T_N^{(v)}<t\big\}.
\]
Moreover, $\lambda^*N \ge \lambda^*(y-1)+\gamma(\lambda^*)t - \frac32\log t \ge \lambda^*(y-c_0) + \gamma(\lambda^*)t - \frac32\log N$ for some constant $c_0>0$. The latter of these inequalities follows since $y\le \epsilon t$ and hence $N/t\in[v^*-\epsilon,v^*+\epsilon]$ for sufficiently large $t$. Thus, using $\gamma(\lambda^*)=\lambda^*v^*$, we deduce
\begin{align*}
\big\{T_N^{(v)}<t\big\} &\subseteq \Big\{\gamma(\lambda^*)\big\{T_N^{(v)} - N/v^*\big) < \gamma(\lambda^*)t-\lambda^*N\Big\} \\
&\subseteq \Big\{\gamma(\lambda^*)\big(T_N^{(v)}-N/v^*\big) < -\lambda^*(y-c_0) + \frac32\log N\Big\}.
\end{align*}
Hence,
\[
\big\{M_t>m_t+y\big\} \subseteq G_{N,\lambda^*(y-c_0)},
\]
and the result follows from Lemma~\ref{upperbd:stop}.
\end{proof}

We also require a complementary lower bound, for which we use a second moment method.

\begin{lemma}\label{lem:lowerbd}
There exists a constant $C>0$ such that
for all $t\ge1$, $1\le y\le \sqrt{t}$, and $x\in[0,1]$,
\begin{equation}\label{eq:lowerbd}
\P^x\Big(M_t>m_t + y\Big) \ge Cye^{-\lambda^*y}.
\end{equation}
\end{lemma}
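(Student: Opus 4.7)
The proof uses a truncated second-moment method on the random variable
\[
Z := \sum_{v \in \cN_t} \one_{A_v}, \qquad A_v := \left\{X_t^{(v)} - q_t t \in [y+1, y+2],\ \max_{s \le t}\big(X_s^{(v)} - q_t s\big) \le y + 2\right\},
\]
so that $\{Z \ge 1\} \subseteq \{M_t \ge m_t + y + 1\} \subseteq \{M_t > m_t + y\}$. By the Paley--Zygmund inequality $\P^x(Z\ge1)\ge (\E^x Z)^2/\E^x Z^2$, it suffices to prove $\E^x Z \ge c\, y\, e^{-\lambda^* y}$ and $\E^x Z^2 \le C\, y\, e^{-\lambda^* y}$ for positive constants $c,C$.

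The first moment is a direct application of Many-to-One (Lemma~\ref{lem:many-to-one}) followed by the $\lambda^*$-tilt (Lemma~\ref{tilt}). On the event $Y_t \in [m_t + y + 1, m_t + y + 2]$, the Radon--Nikodym factor evaluates, using $\gamma(\lambda^*) = \lambda^* v^*$ and the definition of $m_t$, to $t^{3/2} e^{-\lambda^* y + O(1)}$ (the $O(1)$ being controlled by the boundedness of $\psi$). The remaining probability is $\bP_{\lambda^*}^x(E_{t, y+2, 0})$ in the notation of Lemma~\ref{ballot}, and \eqref{eq:ballot-Yt-lower} bounds it below by $(y+2)/(Ct^{3/2})$ for $1 \le y \le \sqrt{t}$. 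Multiplying gives $\E^x Z \ge c\, y\, e^{-\lambda^* y}$.

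For the second moment, split $\E^x Z^2 = \E^x Z + \E^x \sum_{v \ne w} \one_{A_v \cap A_w}$ and treat the off-diagonal term via Many-to-Two (Lemma~\ref{lem:many-to-two}, formula \eqref{mtt:constant}) with $I_s = (-\infty, q_t s + y + 2]$ for $s < t$ and $I_t = [m_t + y + 1, m_t + y + 2]$. This decomposes the sum over the last common ancestor at time $s$ and location $y'$, integrated against $g(y') F(s, y')^2 p(s, x, dy')$, where $F(s, y')$ is the inner expected count of descendants from $y'$ at time $s$ satisfying the tail of the event and $p(s, x, dy')$ is the surviving-spine density. Each of $F$ and $p$ is again bounded via Many-to-One + tilt + the ballot upper bound \eqref{eq:ballot-Yt-upper}: with $a := y + 2 + q_t s - y' \ge 0$ and $r := t - s$, in the bulk regime $s,r \ge 1$, $a \ge 1$ one obtains
\[
F(s, y') \le \frac{C t^{3/2} a}{r^{3/2}}\, e^{-\gamma(\lambda^*) s - \lambda^* y + \lambda^* y'}, \qquad p(s, x, [y', y'+1]) \le \frac{C (y+2) a}{s^{3/2}}\, e^{\gamma(\lambda^*) s - \lambda^* y'}.
\]
Multiplying, using $\lambda^* q_t - \gamma(\lambda^*) = -\tfrac{3 \log t}{2t}$ to collect exponential factors, and integrating out $a$ (absolutely convergent thanks to a residual $e^{-\lambda^* a}$), leaves an $s$-integrand proportional to $(y+2)\, e^{-\lambda^* y} \cdot t^3 t^{-3s/(2t)}/[(t-s)^3 s^{3/2}]$. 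After the substitution $s = tu$, the result is $t^{-1/2} \int_0^1 t^{-3u/2} (1-u)^{-3} u^{-3/2}\,du$; the $u \to 0$ divergence produces $O(\sqrt{t})$ via $\int u^{-3/2}\,du$, and the $u \to 1$ divergence is suppressed to $O(\sqrt{t})$ by the $t^{-3u/2}$ factor once $1 - u \lesssim 1/\log t$, yielding a total of $O((y+2) e^{-\lambda^* y})$. The boundary regimes $s \le 1$ (spine hasn't had time for the ballot bound) and $r \le 1$ (remaining time too short) lie outside Lemma~\ref{ballot}'s hypotheses and are handled separately: for small $s$, $p(s, x, \cdot)$ is majorized by the bounded density of the tilted diffusion and the contribution is $O(y^2 e^{-2\lambda^* y}) \ll y e^{-\lambda^* y}$; for small $r$, $F(s, y')$ is controlled by a Gaussian tail estimate on the displacement, again giving negligible contribution. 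The main technical obstacle is precisely this integral: two factors of $t^{3/2}$ from tilting the two subtrees combine to $t^3$, which must be exactly cancelled by the product $(rs)^{-3/2}$ of ballot denominators together with the exponential correction $t^{-3s/(2t)}$ coming from the log-defect $v^* - q_t = O(\log t/t)$, with delicate boundary analysis at $s = 0, t$. Once $\E^x Z^2 \le C y\, e^{-\lambda^* y}$ is in hand, Paley--Zygmund delivers $\P^x(M_t > m_t + y) \ge c\, y\, e^{-\lambda^* y}$ as required.
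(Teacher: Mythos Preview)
Your proof is correct and follows exactly the paper's approach: the same truncated count (the paper calls it $Z_{t,y}$), Paley--Zygmund, first moment via Many-to-One plus tilt plus \eqref{eq:ballot-Yt-lower}, and the off-diagonal second moment via Many-to-Two plus tilt plus \eqref{eq:ballot-Yt-upper}. The only cosmetic difference is in handling the $s$-integral: the paper absorbs the boundary regimes $s\le 1$, $t-s\le 1$ into $\vee 1$ factors in the denominators and then bounds the integrand directly by $\tfrac{t^{3/2}}{[s(t-s)]^{3/2}\vee1}\le\tfrac{C}{[s\wedge(t-s)]^{3/2}\vee1}$ (which integrates to $O(1)$), rather than substituting $s=tu$ and treating the endpoints separately as you do.
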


\begin{proof}
Define the random variable
\[
Z_{t,y}:= \sum_{v\in\cN_t}\one_{\{y+2 - (X_t^{(v)}-m_t)\in[0,1],\,\max_{s\le t}(X_s^{(v)}-q_ts)\le y + 2 \}}.
\]
An application of Cauchy-Schwarz yields
\begin{equation}\label{lower:eq1}
\P^x\Big(M_t>m_t + y\Big) \ge \P^x\Big(Z_{t,y}\ge1\Big) \ge \frac{\E^x[Z_{t,y}]^2}{\E^x[Z_{t,y}^2]}.
\end{equation}
Applying the Many-to-One Lemma (Lemma~\ref{lem:many-to-one}), Lemma~\ref{tilt}, and \eqref{eq:ballot-Yt-lower} of Lemma \ref{ballot}, one obtains
\begin{align}
    \E^x\big[Z_{t,y}\big] &= \bE^x\left[e^{\int_0^t g(B_s)ds}\,;\, y+2 - \big(B_t-m_t\big)\in[0,1],\,\max_{s\le t}\big(B_s-q_ts\big)\le y + 2\right] \nonumber \\
    &\ge Ce^{-\lambda^*(m_t+y)+\gamma(\lambda^*)t}\bP^x\left(y+2 - \big(Y_t-m_t\big)\in[0,1],\,\max_{s\le t}\big(Y_s-q_ts\big)\le y + 2\right) \nonumber \\
    \label{lower:eq2} &\ge Cye^{-\lambda^*y}.
\end{align}
(Note that
the boundedness of $\psi$ and $N/t$, and the equality $\lambda^*v^*=\gamma(\lambda^*)$, were also used.) To get an upper bound on $\E^x\big[Z_{t,y}^2\big]$, we first apply the Many-to-Two Lemma (Lemma~\ref{lem:many-to-two}) to write
\begin{multline*}
\E^x[Z_{t,y}^2] - \E^x[Z_{t,y}] 
=2\int_0^t\int_0^\infty g\big(q_ts+y+2-z\big)\\
\qquad \cdot\E^{q_ts+y+2-z}\left[\sum_{v\in\cN_{t-s}}\one_{\{\overline X^{(t,v)}_{t-s}-X_0^{(v)}\in [z-1,z],\,\max_{u\le t-s}\overline X^{(t,v)}_u-X_0^{(v)}\le z\}}\right]^2\ \\
\cdot \E^x\left[\sum_{v\in\cN_s}\one_{\{y+2-\overline X^{(t,v)}_s\in dz,\, \max_{u\le s}\overline X^{(v,t)}_u \le y+2\}}\right]\,ds.
\end{multline*}
For given $s$ and $z$, one can repeat the steps in \eqref{lower:eq2}, replacing the lower bound \eqref{eq:ballot-Yt-lower} with the analogous upper bound \eqref{eq:ballot-Yt-upper}, to obtain
\begin{multline}\label{lower:eq3}
\E^{q_ts+y+2-z}\left[\sum_{v\in\cN_{t-s}}\one_{\{\overline X^{(t,v)}_{t-s}-X_0^{(v)}\in [z-1,z],\,\max_{u\le t-s}\overline X^{(t,v)}_u-X_0^{(v)}\le z\}}\right] \\
\le Ce^{\frac{3(t-s)}{2t}\log t-\lambda^*z}\frac{z\vee1}{(t-s)^{3/2}\vee1}.
\end{multline}
Plugging this into the previous display, splitting the inner region of integration into $\{[k-1,k]:k\in\N\}$, and using the boundedness of $g$, we find
\begin{multline}\label{lower:eq4}
\E^x[Z_{t,y}^2] - \E^x[Z_{t,y}] \le C\int_0^t\sum_{k=1}^\infty e^{\frac{3(t-s)}t\log t - 2\lambda^*k}\frac{k^2}{(t-s)^3} \\
\cdot\E^x\left[\sum_{v\in\cN_s}\one_{\{y+2-\overline X^{(t,v)}_s\in [k-1,k],\, \max_{u\le s}\overline X^{(t,v)}_u \le y+2\}}\right]\,ds.
\end{multline}
Applying the same argument as \eqref{lower:eq3} to this final expectation, we deduce
\begin{align*}
    \E^x[Z_{t,y}^2] - \E^x[Z_{t,y}] &\le C\int_0^t \sum_{k=1}^\infty e^{\frac{3(t-s)}t\log t - 2\lambda^*k}\frac{k^2}{(t-s)^3} e^{\frac{3s}{2t}\log t-\lambda^*(y-z)}\frac{yk}{s^{3/2}\vee1}ds \\ 
    &\le Cye^{-\lambda^*y}\cdot\int_0^t\frac{t^{3/2}}{[s(t-s)]^{3/2}\vee1} ds\cdot\sum_{k=1}^\infty k^3e^{-\lambda^*k} \\
    &\le Cye^{-\lambda^*y}\cdot\int_0^t\frac1{[s\wedge(t-s)]^{3/2}\vee1} ds \\
    &\le Cye^{-\lambda^*y},
\end{align*}
where $\log t/t\le \log s/s$ for $s\le t$ was used in the second step. Combined with \eqref{lower:eq1} and \eqref{lower:eq2}, this implies \eqref{eq:lowerbd} and completes the proof.
\end{proof}

We end this subsection with the following, which is an obvious consequence of Corollary~\ref{upperbd:cor} and Lemma~\ref{lem:lowerbd}, but will nonetheless be useful to state.

\begin{corollary}\label{bounds:cor}
There exist constants $C,\delta>0$ such that for all $u\ge1$, $z \ge -\log u + 1$, and $x\in[0,1)$,
\begin{equation}\label{eq:bounds-cor-upper}
    \P^x\Big(M_u>v^*u + z\Big) \le Cu^{-3/2}(z+\log u)e^{-\lambda^*z}g_{u,\delta}(z),
\end{equation}
 and for all $u\ge1$, $1\le z\le \sqrt{u}$, and $x\in[0,1)$,
\begin{equation}\label{eq:bounds-cor-lower}
    \P^x\Big(M_u>v^* + z\Big) \ge C^{-1} u^{3/2}ze^{-\lambda^*z}.
\end{equation}
 Both \eqref{eq:bounds-cor-upper} and \eqref{eq:bounds-cor-lower} remain true if $v^*u$ is replaced by $q_tu$ for some $t\ge u^2$.
\end{corollary}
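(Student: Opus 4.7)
The plan is to view the corollary as a rewriting of Corollary~\ref{upperbd:cor} and Lemma~\ref{lem:lowerbd} in which the reference level $m_u = v^* u - \frac{3}{2\lambda^*}\log u$ is replaced by $v^* u$ (and later by $q_t u$). The natural substitution is
\[
y := z + \frac{3}{2\lambda^*}\log u,
\]
so that $m_u + y = v^* u + z$ and $e^{-\lambda^* y} = u^{-3/2} e^{-\lambda^* z}$. Both bounds then follow from the corresponding statements at level $m_u$ after accounting for (a) the range conditions on $y$ inherited from the underlying results and (b) the relation between the correction factors $g_{u,\delta}(y)$ and $g_{u,\delta'}(z)$.

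For the upper bound I would apply Corollary~\ref{upperbd:cor} whenever $y \ge 1$. When $\lambda^* \le \tfrac{3}{2}$ this is guaranteed by the hypothesis $z \ge -\log u + 1$; in the complementary range where $y < 1$, the right-hand side of \eqref{eq:bounds-cor-upper} is bounded away from zero and the bound is trivial. Corollary~\ref{upperbd:cor} then gives
\[
\P^x(M_u > v^* u + z) \;\le\; C (z + \tfrac{3}{2\lambda^*}\log u)\, u^{-3/2}\, e^{-\lambda^* z}\, g_{u,\delta}(y),
\]
and a short case analysis---separating $z \le C\log u$ (where both correction factors lie between positive constants) from $z \ge C\log u$ (where $z \le y \le 2z$, so the exponents appearing in $g$ differ only by a bounded factor)---replaces $g_{u,\delta}(y)$ by $g_{u,\delta'}(z)$ after possibly shrinking $\delta$. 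For the lower bound I would apply Lemma~\ref{lem:lowerbd} with the same $y$, using $y \ge z$ to obtain $C y e^{-\lambda^* y} \ge C^{-1} z u^{-3/2} e^{-\lambda^* z}$. The lemma's requirement $y \le \sqrt{u}$ is satisfied on $z \le \sqrt{u} - \tfrac{3}{2\lambda^*}\log u$; the narrow residual interval is absorbed by monotonicity, since $z e^{-\lambda^* z}$ varies by only a multiplicative constant over a window of width $O(\log u)$.

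For the variant with $q_t u$ in place of $v^* u$ with $t \ge u^2$, note that
\[
|v^* u - q_t u| \;=\; \frac{3 u \log t}{2\lambda^* t},
\]
which is bounded uniformly in $u \ge 1$ and $t \ge u^2$: since $(\log t)/t$ is eventually decreasing, its maximum over $t \ge u^2$ is attained at $t = u^2$, yielding a bound of order $(\log u)/u$ for $u$ away from $1$, and the quantity is trivially bounded for $u$ in a compact range. Thus $\P^x(M_u > q_t u + z) = \P^x(M_u > v^* u + z')$ for some $|z - z'| \le C$, and the upper and lower bounds transfer after adjusting constants, since $e^{-\lambda^* z'}$ and $z' + \log u$ differ from $e^{-\lambda^* z}$ and $z + \log u$ only multiplicatively. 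The main obstacle is really just bookkeeping: justifying the comparison $g_{u,\delta}(y) \le C g_{u,\delta'}(z)$ and cleanly absorbing the boundary of the range of $z$ in the lower bound.
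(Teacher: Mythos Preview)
Your approach is correct and matches the paper's: the corollary is stated there as ``an obvious consequence of Corollary~\ref{upperbd:cor} and Lemma~\ref{lem:lowerbd}'' with no further proof given, and your substitution $y = z + \tfrac{3}{2\lambda^*}\log u$ together with the observation that $|v^* u - q_t u|$ is uniformly bounded for $t \ge u^2$ is exactly the intended derivation. Your bookkeeping remarks about the ranges of $y$ and the comparison of $g_{u,\delta}(y)$ with $g_{u,\delta'}(z)$ are appropriate and fill in details the paper omits.
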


\subsection{Exact asymptotics}
In this subsection, we obtain estimates for $\P^x(M_t>m_t+y)$ as $t\to\infty$ followed by $y\to\infty$. We are primarily concerned with the following.

\begin{proposition}\label{righttail}
There exists a positive, continuous, $1$-periodic function $\nu$ such that
\begin{equation}\label{eq:righttail}
    \lim_{y\to\infty}\limsup_{t\to\infty}\sup_{x\in[0,1)}\left|\frac{\P^x\Big(M_t>m_t+y\Big)}{\psi(x,\lambda^*)\nu(m_t+y)ye^{-\lambda^*(y-x)}} - 1\right| = 0.
\end{equation}
\end{proposition}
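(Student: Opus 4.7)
The plan is to use a first/second moment method on the number of BBM particles that cross a suitable stopping line near level $m_t+y$ and whose descendants exceed $m_t+y$ at time $t$. The integer choice of stopping level $N$ is essential: by periodicity of $g$, a BBM reset at an integer position is equal in law (modulo translation) to a fresh BBM at the origin, which decouples the ``ancestor'' and ``descendant'' pieces and produces the periodic dependence on $m_t+y$ via $r := m_t+y-N \in [0,1)$.

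First I would set up the count. Fix $N=\lfloor m_t+y\rfloor$, and for $v\in\cN_{T_N}$ let $B^{(v)}$ be the event that $T_N^{(v)}\le t$ and the trajectory $\{X_s^{(v)}\}_{s\le T_N^{(v)}}$ stays below an upper envelope of the form $q_ts+y+h\bigl(s\wedge(T_N^{(v)}-s)\bigr)$ (the two-sided log-envelope used in \eqref{eq:ballot-Sk-limsup}/\eqref{eq:ballot-Sk-concave}), and let $D^{(v)}$ be the event that some descendant of $v$ exceeds $m_t+y$ at time $t$ subject to a companion barrier on the descendant piece. Set $Z=\#\{v\in\cN_{T_N}:B^{(v)}\cap D^{(v)}\}$. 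Lemma~\ref{upperbd:stop} (and the Markov property applied to the descendant sub-BBMs) bounds the contribution from trajectories violating either barrier by $o(ye^{-\lambda^*y})$, so $\P^x(M_t>m_t+y)=\P^x(Z\ge1)+o(ye^{-\lambda^*y})$ uniformly in $x$.

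For the first moment, Many-to-One (Lemma~\ref{lem:many-to-one}) expresses $\E^x[Z]$ as an expectation over a single Brownian path with weight $e^{\int_0^{T_N}g(B_s)ds}$ and inner factor $\P^N(M_{t-T_N}>m_t+y)=\P^0(M_u>r)$ (periodicity, $u=t-T_N$). Applying Lemma~\ref{tilt} at $T_N$, using $Y_{T_N}=N$ a.s.\ and $\psi(N,\lambda^*)=\psi(0,\lambda^*)$, then collecting the deterministic exponents via $T_N=N/v^*+S_N$, $\gamma(\lambda^*)T_N=\lambda^*N+\gamma(\lambda^*)S_N$, and $\lambda^*m_t=\gamma(\lambda^*)t-\tfrac32\log t$, yields
\[
\E^x[Z]=\frac{\psi(x,\lambda^*)}{\psi(0,\lambda^*)}e^{\lambda^*x}\,t^{3/2}e^{\lambda^*(r-y)}\,\bE^x_{\lambda^*}\!\left[e^{-\gamma(\lambda^*)u}\P^0(M_u>r)\,\one_{\text{barrier}}\right].
\]
With $u=a_t-S_N$, $a_t=t-N/v^*$, this is a ballot-type functional of the centred walk $\tilde S_k=S_k-kd_t$ with drift $d_t=1/q_t-1/v^*=O(\log t/t)$. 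Applying \eqref{eq:ballot-Sk-limit} (and its log-envelope counterpart \eqref{eq:ballot-Sk-limsup} to control the endpoint regime) with the test function $F(w)$ encoding $e^{-\gamma(\lambda^*)(a_t-w)}\P^0(M_{a_t-w}>r)$, and using that $V^x(y/q_t)/y\to 1/v^*$ as $y\to\infty$ uniformly in $x\in[0,1)$, isolates the $y$-factor and yields
\[
\E^x[Z]=\psi(x,\lambda^*)e^{\lambda^*x}\,\nu(m_t+y)\,y\,e^{-\lambda^*y}(1+o(1)),
\]
where $\nu$ is defined by the resulting integral; it depends only on $\{m_t+y\}$ (hence periodic) and is continuous in that variable, while positivity is forced by the matching lower bound in Lemma~\ref{lem:lowerbd}.

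The main obstacle is the second-moment bound $\E^x[Z(Z-1)]=o(\E^x[Z])$ as $y\to\infty$. I would apply the Many-to-Two Lemma~\ref{lem:many-to-two} in its hitting-time form \eqref{mtt:hitting} at $\tau=T_N$: two distinct good particles must share a branch point at some time $s$ and position $w$, producing a factor $\int\!\!\int g(w)\bigl(\E^w[\text{single good line}]\bigr)^2 p(s,x,dw)\,ds$. Each factor inside the square is bounded by the same tilt/ballot recipe as in Step~2, this time using the upper bounds \eqref{eq:ballot-Sk-upper} and \eqref{eq:ballot-Sk-concave}; the kernel $p(s,x,dw)$ is similarly controlled. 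The delicate point is that the log-envelope barrier traps the branching point $w$ strictly below level $N$, which produces an extra power of $(\text{gap})/y$ that, integrated over the branch time, gives $o(y^2e^{-2\lambda^*y})$. Paley--Zygmund combined with Step~1 then upgrades to $\P^x(Z\ge1)=\E^x[Z](1+o(1))$ uniformly in $x\in[0,1)$, proving \eqref{eq:righttail}.
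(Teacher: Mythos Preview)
Your overall architecture is right (stopping-line count, Many-to-One plus tilt for the first moment, Many-to-Two for the second, Paley--Zygmund to close), but there is a genuine gap in the second-moment step that comes from your choice of stopping level.

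You set $N=\lfloor m_t+y\rfloor$, so after a particle hits level $N$ it only has to cover the residual distance $r=\{m_t+y\}\in[0,1)$. Consequently the ``descendant'' factor $\P^0(M_{t-T_N}>r)$ is of order $1$, not small. In the Many-to-Two decomposition the inner factor gets squared, and since it is $\Theta(1)$ the square produces no extra decay; the resulting bound is exactly the computation in Lemma~\ref{lem:lowerbd}, which gives $\E^x[Z(Z-1)]\le C\,y e^{-\lambda^* y}$, i.e.\ the \emph{same} order as $\E^x[Z]$. Paley--Zygmund then only yields $\P^x(Z\ge1)\ge c\,\E^x[Z]$ for some $c\in(0,1)$, which is the content of Lemma~\ref{lem:lowerbd}, not the sharp $(1+o(1))$ asymptotic needed for \eqref{eq:righttail}. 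Your claimed bound $\E^x[Z(Z-1)]=o(y^2e^{-2\lambda^*y})$ is therefore false with this choice of $N$: the log-envelope barrier does force the branch point below $N$, but the gap it creates is only logarithmic and cannot manufacture the missing smallness.

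The paper's fix is to introduce an auxiliary scale $u=u(y)$ with $u\to\infty$ and $u\le y$, and to set $N=\lfloor m_t+y-u\rfloor$, so the descendant must still cover distance $\approx u$. The descendant factor is then $\rho_s(u+\{\cdot\})$ with $s\asymp u$, which by Corollary~\ref{bounds:cor} contributes a factor $\asymp u^{-3/2}$; squaring yields the crucial $u^{-3}$ in the bound $\E^x[\Delta_{t,y}^2-\Delta_{t,y}]\le Cu^{-3}ye^{-\lambda^*y}$ (Lemma~\ref{asymp:lower}). A second subtlety you gloss over is that one cannot use a single barrier throughout: the paper runs three nested events $D^{(v)}_{t,y}\subset E^{(v)}_{t,y}\subset F^{(v)}_{t,y}$ (straight discrete barrier for the sharp limit \eqref{eq:ballot-Sk-limit}, straight continuous barrier for the Many-to-Two, log-envelope for the upper bound via Lemma~\ref{upperbd:stop}) and proves separately that $\E[\Xi]\sim\E[\Lambda]$ (Lemma~\ref{asymp:upper}) and $\E[\Delta^2]\sim\E[\Delta]$ (Lemma~\ref{asymp:lower}). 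Finally, the $u$-independence of the limit (hence well-definedness of $\nu$) is argued a posteriori, by comparing two choices $u,u'$ through the already-established equivalence with $\P^x(M_t>m_t+y)$.
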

To prove this, we will first need to compare the probability of $\{M_t>m_t+y\}$ with a suitable expectation, and then use the sharp result \eqref{eq:ballot-Sk-limit} of Lemma~\ref{ballot:Sk} to find the limiting behavior of this expectation.

In what follows, we will consider $u=u(y)$ satisfying
\begin{equation}\label{u:assm}
0\le u \le y \qquad \text{and} \qquad \lim_{y\to\infty}u = \infty,
\end{equation}
and define the following events:
\begin{align*}
    D_{t,y}^{(v)} &:= \left\{ T_N^{(v)}<t,\, \min_{k\le N}\Big(\lambda^*y + W_k^{(N,v)}\Big) \ge 0 \right\}, \\
    E_{t,y}^{(v)} &:= \left\{ T_N^{(v)}<t,\, \min_{s \le T_N^{(v)}}\Big(y+c_0+q_ts-X_s^{(v)}\Big) \ge 0\right\}, \\
    F_{t,y}^{(v)} &:=\left\{T_N^{(v)}<t,\, \min_{k \le N} \Big(\lambda^*y + \log u + h\big(k\wedge(N-k)\big) + W_k^{(N,v)}\Big) \ge 0\right\},
\end{align*}
where $W_k^{(N,v)} = \gamma(\lambda^*)(T_k^{(v)} - k/v^*) - \frac32\log N$ and $h(i)=3\log(i+1)$ as in Lemma~\ref{upperbd:stop}. Note that, for a suitable choice of constant $c_0$, one has
\begin{equation}\label{ascending}
D_{t,y}^{(v)} \subseteq E_{t,y}^{(v)}\subseteq F_{t,y}^{(v)}
\end{equation}
if $y\le\sqrt{t}$ and $u$ is sufficiently large. To see this, observe first by Taylor expansion that
\[
\frac1q_t - \frac1{v^*} = \frac3{2(v^*)^2\lambda^*}\frac{\log t}t +O\left(\frac{\log t}t\right)^2 =\frac3{2\gamma(\lambda^*)}\frac{\log N}{v^*t}+ O\left(\frac1t\right) = \frac3{2\gamma(\lambda^*)}\frac{\log N}N + O\left(\frac1t\right),
\]
and so
\[
\frac{W_k^{(N,v)}}{\gamma(\lambda^*)} = T_k-k\left(\frac1{v^*}+\frac3{2\gamma(\lambda^*)}\frac{\log N}N\right) = T_k-k/q_t + O(1).
\]
Hence, on $D_{t,y}^{(v)}$, for $k\le N-1$ and $s\in[T_k^{(v)},T_{k+1}^{(v)})$, one has
\[
0\le \frac{q_t}{\gamma(\lambda^*)}\Big(\lambda^*y + W_k^{(N,v)}\Big) = y + q_tT_k^{(v)} - k + O(1) \le y + q_ts - X_s^{(v)} + O(1),
\]
and on $E_{t,y}^{(v)}$, for $k\le N$ one has
\begin{multline*}
0\le y + c_0 + q_tT_k^{(v)} - k = \frac{q_t}{\gamma(\lambda^*)}\Big(\lambda^* y + W_k^{(N,v)}\Big) + O(1) \\
\le \frac{q_t}{\gamma(\lambda^*)}\Big(\lambda^*y + \log u + h\big(k\wedge(N-k)\big) + W_k^{(N,v)}\Big).
\end{multline*}

Now, for $v$ alive before time $t$, let $M^{(v)}_t$ be the maximum of $X^{(w)}_t$ over $w\in\cN_t$ which are descendants of $v$. We define
\begin{align*}
    \Lambda_{t,y}&:=\sum_{v\in\cN_{T_N}} \one_{D^{(v)}_{t,y}\cap\{M^{(v)}_t>m_t+y\}}, \\
    \Delta_{t,y}&:=\sum_{v\in\cN_{T_N}}\one_{E^{(v)}_{t,y}\cap\{M^{(v)}_t>m_t+y\}}, \\
    \Xi_{t,y}&:=\sum_{v\in\cN_{T_N}}\one_{F^{(v)}_{t,y}\cap\{M^{(v)}_t>m_t+y\}}.
\end{align*}
Our first step in proving Proposition~\ref{righttail} is the following.

\begin{lemma}\label{asymp:equiv}
    One has
    \begin{equation}\label{eq:asymp:equiv}
        \lim_{y\to\infty}\limsup_{t\to\infty}\sup_{x\in[0,1)}\left|\frac{\P^x\big(M_t>m_t+y\big)}{\E^x[\Lambda_{t,y}]} - 1 \right| = 0.
    \end{equation}
\end{lemma}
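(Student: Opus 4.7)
I would combine a first-moment upper bound with a Paley-Zygmund lower bound. Every particle in $\cN_t$ whose position exceeds $m_t+y$ has a unique ancestor $v\in\cN_{T_N}$ satisfying $T_N^{(v)}<t$ and $M^{(v)}_t>m_t+y$, so $\{M_t>m_t+y\}$ decomposes according to whether this ancestor's first-hitting trajectory satisfies the ballot barrier $D^{(v)}_{t,y}$ defining $\Lambda_{t,y}$.

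\textbf{Upper bound.} Split
\[
\{M_t>m_t+y\}\subseteq\{\Lambda_{t,y}\ge1\}\cup B,
\]
where $B$ is the event that some ancestor $v\in\cN_{T_N}$ has $T_N^{(v)}<t$, $M^{(v)}_t>m_t+y$, and $D^{(v)}_{t,y}$ fails. Markov gives $\P^x(\Lambda_{t,y}\ge1)\le\E^x[\Lambda_{t,y}]$. To bound $\P^x(B)$, decompose by the smallest $k\le N$ at which the barrier fails. The strong Markov property at $T_k^{(v)}$ identifies the subtree of $v$ as an independent BBM from position $k$, whose probability of exceeding $m_t+y$ by time $t$ is controlled by Corollary~\ref{bounds:cor}. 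Folding this estimate into a Many-to-One computation (Lemma~\ref{lem:many-to-one}), the tilt (Lemma~\ref{tilt}), and the ballot estimates \eqref{eq:ballot-Sk-upper}--\eqref{eq:ballot-Sk-concave} of Lemma~\ref{ballot:Sk} for the pre-failure segment yields $\P^x(B)=o(ye^{-\lambda^*y})$ uniformly in $x\in[0,1)$. Combined with Lemma~\ref{lem:lowerbd} (which gives $\P^x(M_t>m_t+y)\gtrsim ye^{-\lambda^*y}$, hence also $\E^x[\Lambda_{t,y}]\gtrsim ye^{-\lambda^*y}$), this yields $\P^x(M_t>m_t+y)\le(1+o(1))\E^x[\Lambda_{t,y}]$.

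\textbf{Lower bound and main obstacle.} Since $D^{(v)}_{t,y}\subseteq E^{(v)}_{t,y}$, we have $\Lambda_{t,y}\le\Delta_{t,y}$ and $\{\Delta_{t,y}\ge1\}\subseteq\{M_t>m_t+y\}$; Paley-Zygmund then gives
\[
\P^x(M_t>m_t+y)\ge\frac{\E^x[\Delta_{t,y}]^2}{\E^x[\Delta_{t,y}^2]}\ge\frac{\E^x[\Lambda_{t,y}]\,\E^x[\Delta_{t,y}]}{\E^x[\Delta_{t,y}^2]},
\]
so it suffices to establish the second-moment bound $\E^x[\Delta_{t,y}^2]\le(1+o(1))\E^x[\Delta_{t,y}]$, equivalently $\E^x[\Delta_{t,y}(\Delta_{t,y}-1)]=o(\E^x[\Delta_{t,y}])$. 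This is the key obstacle. I would apply the Many-to-Two Lemma (Lemma~\ref{lem:many-to-two}) with hitting time $T_N$, writing $\E^x[\Delta_{t,y}(\Delta_{t,y}-1)]$ as an integral over the split time $s$ and split position $z<N$. The pre-split density $p(s,x,dz)$ is controlled by the tilt (Lemma~\ref{tilt}) together with Lemma~\ref{ballot:Sk}, while each of the two independent post-split factors---the probability that a BBM started at $z$ has maximum exceeding $m_t+y$ at time $t$ while its ancestor remains in the barrier region---is bounded via Corollary~\ref{bounds:cor} by roughly $(q_ts+y+N-z)e^{-\lambda^*(q_ts+y+N-z)}/(t-s)^{3/2}$. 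Squaring, weighting by $g$, and integrating over $(s,z)$ yields a total of order $y^2 e^{-2\lambda^*y}$ up to polylogarithmic corrections, which is $o(\E^x[\Delta_{t,y}])$ since $\E^x[\Delta_{t,y}]\asymp ye^{-\lambda^*y}$. The periodicity of the environment is handled throughout via the uniformly bounded, $1$-periodic eigenfunction $\psi(\cdot,\lambda^*)$, preserving uniformity in $x\in[0,1)$.
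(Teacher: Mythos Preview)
Your overall architecture—first-moment upper bound plus Paley--Zygmund lower bound on $\Delta_{t,y}$—matches the paper's, and the lower-bound framework is identical to what the paper does. However, two points in the execution diverge from the paper and one of them is a genuine gap.

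\textbf{Upper bound.} The paper does \emph{not} bound $\P^x(B)$ directly. Instead it introduces the curved, cushioned barrier $F^{(v)}_{t,y}$ (with the extra $\log u + h(k\wedge(N-k))$) and writes
\[
\P^x(M_t>m_t+y)\le \E^x[\Xi_{t,y}]+\P^x\big(G_{N,\lambda^*y+\log u}\big).
\]
Lemma~\ref{upperbd:stop} then gives $\P^x(G_{N,\lambda^*y+\log u})\le C u^{-1}\,ye^{-\lambda^*y}$ (the $\log u$ cushion is what produces the $u^{-1}$), and Lemma~\ref{asymp:upper} shows $\E^x[\Xi_{t,y}]/\E^x[\Lambda_{t,y}]\to 1$ by comparing the straight and curved ballot barriers via \eqref{eq:ballot-Sk-limit} and \eqref{eq:ballot-Sk-limsup}. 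Your direct decomposition by the first failure of the \emph{straight} $D$-barrier can in principle be made to work, but the sketch does not explain where the $o(1)$ factor comes from: the barrier-crossing probability alone (Lemma~\ref{upperbd:stop} with $z=\lambda^*y$, no cushion) is already of order $ye^{-\lambda^*y}$, so you must extract extra smallness from the post-failure probability of exceeding $m_t+y$. That smallness is the $u^{-3/2}$ coming from the remaining gap $m_t+y-N\approx u$, and your sketch does not isolate it.

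\textbf{Second moment.} Here there is a real error. In the Many-to-Two formula for $\E^x[\Delta_{t,y}(\Delta_{t,y}-1)]$ on the stopping line $\cN_{T_N}$, each post-split factor is the \emph{expected count}
\[
\E^z\Bigg[\sum_{v\in\cN_{T_N}}\rho_{t-s-T_N^{(v)}}(m_t+y-N)\,\one_{\{T_N^{(v)}<t-s,\ \text{barrier}\}}\Bigg],
\]
not the probability $\P^z(M_{t-s}>m_t+y)$; the count can be far larger than one, so bounding it by a probability is not an upper bound. The paper (Lemma~\ref{asymp:lower}) bounds this count by first applying Many-to-One plus the tilt to reach $N$ under the barrier, and then multiplying by $\rho_{\cdot}(m_t+y-N)$; the latter step contributes a factor $u^{-3/2}$ via Corollary~\ref{bounds:cor} because $m_t+y-N\approx u$. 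Squaring gives $u^{-3}$, and the resulting bound is $\E^x[\Delta_{t,y}^2-\Delta_{t,y}]\le C u^{-3}ye^{-\lambda^*y}$, which is $o(\E^x[\Delta_{t,y}])$ since $u\to\infty$. Your claimed order $y^2e^{-2\lambda^*y}$ is not what the computation yields and cannot be obtained from the heuristic in your sketch; the gain is $u^{-3}$, not an extra $e^{-\lambda^*y}$. This is precisely the role of the auxiliary parameter $u=u(y)$ and the choice $N=\lfloor m_t+y-u\rfloor$: the gap of size $u$ between the stopping line and the target level is what makes the off-diagonal second moment negligible.
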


In order to prove Lemma \ref{asymp:equiv}, we first show that the expectations of $\Gamma_{t,y}$ and $\Xi_{t,y}$ are asymptotically equivalent. We are then able to use Lemma \ref{upperbd:stop} and a second moment argument to obtain upper and lower bounds, respectively.

\begin{lemma}\label{asymp:upper}
    One has
    \begin{equation}
        \lim_{y\to\infty}\limsup_{t\to\infty}\sup_{x\in[0,1)}\frac{\E^x[\Xi_{t,y}]}{\E^x[\Lambda_{t,y}]} = 1.
    \end{equation}
\end{lemma}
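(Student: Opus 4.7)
Since $D^{(v)}_{t,y} \subseteq F^{(v)}_{t,y}$ by \eqref{ascending}, one has $\Lambda_{t,y} \le \Xi_{t,y}$ pointwise, so $\E^x[\Xi_{t,y}]/\E^x[\Lambda_{t,y}] \ge 1$ automatically; the task is to show a matching upper bound that tends to $1$ as $y \to \infty$, uniformly in $t$ and in $x \in [0,1)$. The plan is to reduce both expectations to ballot integrals for the random walk $\{S_k\}$ under $\bP_{\lambda^*}$ and apply Lemma~\ref{ballot:Sk}: the sharp asymptotic \eqref{eq:ballot-Sk-limit} gives the precise value of $\E^x[\Lambda_{t,y}]$ up to multiplicative $o(1)$, while the curved-barrier limsup \eqref{eq:ballot-Sk-limsup} gives a matching upper bound for $\E^x[\Xi_{t,y}]$.

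The steps run as follows. First, apply the branching Markov property at the stopping line $L_{T_N}$: since $N \in \Z$ and $g$ is $1$-periodic, the descendants of each $v \in \cN_{T_N}$ form an independent BBM with the same law as one started at $0$, so $\P(M^{(v)}_t > m_t+y \mid \cF_{T_N^{(v)}}) = \varphi(T_N^{(v)})$ with $\varphi(\tau) := \P^0(M_{t-\tau} > m_t+y-N)$. Applying the Many-to-One Lemma at $L_{T_N}$ (Lemma~\ref{lem:many-to-one}) and tilting via Lemma~\ref{tilt} (using Remark~\ref{tilt:stopped}), and invoking $Y_{T_N} = N$ and $\gamma(\lambda^*) = \lambda^* v^*$, gives
\[
\E^x[\Lambda_{t,y}] = \frac{\psi(x,\lambda^*) e^{\lambda^* x}}{\psi(N,\lambda^*)}\, N^{3/2}\, \bE_{\lambda^*}^x\bigl[e^{\gamma(\lambda^*) S_N^{(N)}}\, \one_D\, \varphi(T_N)\bigr],
\]
and analogously for $\E^x[\Xi_{t,y}]$ with $\one_D$ replaced by $\one_F$, where $S_k^{(N)}$ corresponds to the walk of Lemma~\ref{ballot:Sk} with $d_N = -\frac{3\log N}{2\gamma(\lambda^*) N}$ so that $W_k^{(N)} = \gamma(\lambda^*) S_k^{(N)}$. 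Second, set $R_k := y/v^* + S_k^{(N)}$: the event $D$ translates to $\{\min_{k\le N} R_k \ge 0,\ R_N \le R^{\max}\}$ with $R^{\max} = O(1)$ (from $T_N < t$ together with $N/t \to v^*$), starting from $R_0 = y/v^*$; and $F$ replaces the hard barrier by $R_k + (\log u + h(k\wedge(N-k)))/\gamma(\lambda^*) \ge 0$. Choose $u = \exp(\gamma(\lambda^*) y^{1/10})$ so the curved barrier in $F$ matches the form in \eqref{eq:ballot-Sk-limsup}. The weight factors as $e^{\gamma(\lambda^*) S_N^{(N)}} \varphi(T_N) = e^{-\lambda^* y}\tilde F(R_N)$ for a bounded Lipschitz function $\tilde F$ supported in $[0,R^{\max}]$, the Lipschitz control of $\varphi$ in $\tau$ following from Corollary~\ref{bounds:cor} together with standard Markov estimates. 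Finally, invoke \eqref{eq:ballot-Sk-limit} for $\Lambda$ and \eqref{eq:ballot-Sk-limsup} for $\Xi$: the $N^{3/2}$-scaled expectations are, respectively, $\beta^* V^x(y/v^*) \int \tilde F V\, dw + o(1)$ and at most $(1+\delta)\, \beta^* V(y/v^*) \int \tilde F V\, dw + o(1)$, yielding
\[
\limsup_{t\to\infty} \frac{\E^x[\Xi_{t,y}]}{\E^x[\Lambda_{t,y}]} \le (1+\delta)\, \frac{V(y/v^*)}{V^x(y/v^*)},
\]
which tends to $1$ as $y \to \infty$ by $V(y)/y \to 1$ and $V^x(y)/y \to 1$ (the latter uniformly in $x \in [0,1)$ since $\bE_{\lambda^*}^x[S_1]$ is bounded in $x$).

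The main obstacle is making the factor $1+\delta$ in \eqref{eq:ballot-Sk-limsup} truly vanish as $y\to\infty$: since the support of $\tilde F$ has left endpoint $z = 0$, the raw $\delta_{y\wedge z} = \delta_0$ is a fixed constant and not $o_y(1)$. To remove this, decompose $F \setminus D$ according to the first index $k^*$ at which $R_{k^*} < 0$, and bound each slice separately. Contributions from $k^* \in [N/2, N]$ are controlled by \eqref{eq:ballot-Sk-concave}, whose extra factor $h(N-k^*) \lesssim \log(N - k^*)$ gains a small multiplier over $y$; contributions from $k^* \in [1, N/2]$ are handled by a time-reversal of the walk, applying the same estimate to $\{S_{N-k} - S_N\}$; and the overall slack $\log u \sim y^{1/10}$ bounds the maximal depth of the dip. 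Summing over $k^*$ yields $\E^x[\Xi_{t,y} - \Lambda_{t,y}] = o_y(\E^x[\Lambda_{t,y}])$, which combined with the preceding display closes the argument.
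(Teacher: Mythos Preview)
Your overall strategy---reduce both expectations to ballot-type integrals via Many-to-One and the tilt, then compare using \eqref{eq:ballot-Sk-limit} and \eqref{eq:ballot-Sk-limsup}---matches the paper's, but the execution has a real gap at the point you yourself flag as ``the main obstacle.''

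First, a factual error in the setup: with the paper's choice $N=\lfloor m_t+y-u\rfloor$ (which is forced by the definition of $\Lambda_{t,y}$ and $\Xi_{t,y}$ in Section~\ref{sec-righttail}), the event $\{T_N<t\}$ does \emph{not} translate to $R_N\le R^{\max}$ with $R^{\max}=O(1)$. A direct computation (cf.\ the paper's $\zeta_{t,y}=\lambda^*(u+\{m_t+y-u\})+\tfrac32\log(t/N)$) gives $R^{\max}=\zeta_{t,y}/\gamma(\lambda^*)\approx u/v^*$, which grows with $y$. Hence your $\tilde F$ has support of width $\asymp u$, and neither \eqref{eq:ballot-Sk-limit} nor \eqref{eq:ballot-Sk-limsup} can be applied to it as a single function with fixed $a$, bounds, and Lipschitz constant (the rates of convergence in those statements depend on these). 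Your choice $u=\exp(\gamma(\lambda^*)y^{1/10})$ also violates the standing hypothesis $u\le y$ in \eqref{u:assm} and makes the support exponentially large.

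Second, the proposed repair---decompose $F\setminus D$ by the first index $k^*$ at which $R_{k^*}<0$, bound late indices by \eqref{eq:ballot-Sk-concave} and early indices by time-reversal---is not a complete argument. Estimate \eqref{eq:ballot-Sk-concave} controls the walk only up to time $j=k^*$; you still need a second ballot factor for the segment $k^*\to N$ landing in the support of $\tilde F$, and the resulting double sum $\sum_{k^*}(\cdot)/(k^*)^{3/2}\cdot(\cdot)/(N-k^*)^{3/2}$, with numerators containing $\log u+h(k^*)$, does not obviously yield $o_y(\E^x[\Lambda_{t,y}])$. The time-reversal step is also delicate: the reversed walk $\{S_{N-k}-S_N\}$ has increments distributed as $-S_1$, not $S_1$, so \eqref{eq:ballot-Sk-concave} does not apply to it verbatim.

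The paper sidesteps all of this by a different decomposition: it writes $\E^x[\Xi_{t,y}-\Lambda_{t,y}]$ as an integral over the terminal value $z=\lambda^*y+W_N^{(N)}\in[-\log u,\zeta_{t,y}]$ and splits into $[-\log u,u^{1/3}]\cup[u^{1/3},u^{2/3}]\cup[u^{2/3},\zeta_{t,y}]$. On the two outer pieces the $D$-barrier is simply dropped and crude bounds (via \eqref{eq:ballot-Sk-concave} and Corollary~\ref{bounds:cor}) give contributions of order $ye^{-\lambda^*y}u^{-1/4}$. On the middle piece \emph{both} the start $y$ and the endpoint $z$ are at least $u^{1/3}$, so now \eqref{eq:ballot-Sk-limsup} and \eqref{eq:ballot-Sk-limit} apply with $\delta_{y\wedge z}\le\delta_{u^{1/3}}\to 0$, giving the difference $\le 2\delta_{u^{1/3}}\E^x[\Lambda_{t,y}]$. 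Combined with the lower bound $\E^x[\Lambda_{t,y}]\ge Cye^{-\lambda^*y}$, this closes the argument. The key idea you are missing is to localize the \emph{terminal value} away from the barrier, rather than to localize the time of the barrier crossing.
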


\begin{proof}
By Lemma~\ref{lem:many-to-one}, we have
\begin{align*}
    \E^x\big[\Xi_{t,y}-\Lambda_{t,y}\big] &= \E^x\left[\sum_{v\in\cN_{T_N}}\one_{(F_{t,y}^{(v)}\setminus E_{t,y}^{(v)})\cap\{M_t^{(v)}>m_t+y\}}\right] \\
    &=\bE^x\left[e^{\int_0^{T_N}g(B_s)ds}\rho_{t-T_N}(m_t+y-N) \, ; \, F_{t,y}\setminus D_{t,y}\right]
\end{align*}
where $D_{t,y}$ (respectively, $F_{t,y}$) is the analogue of $D_{t,y}^{(v)}$ (respectively, $F_{t,y}^{(v)}$) when $\{T^{(v)}_k\}$ is replaced by $\{T_k\}$, and
\begin{equation}\label{def:rho}
    \rho_s(z):=\P\Big(M_s>z\Big).
\end{equation}
Applying Lemma~\ref{tilt}, we have
\begin{align}
\E^x\big[\Xi_{t,y}-\Lambda_{t,y}\big] &= \frac{\psi(x,\lambda^*)}{\psi(N,\lambda^*)}\bE_{\lambda^*}^x\left[e^{-\lambda^*(N-x)+\gamma(\lambda^*)T_N}\rho_{t-T_N}(m_t+y-N);F_{t,y}\setminus D_{t,y}\right] \nonumber \\
&=\frac{\psi(x,\lambda^*)}{\psi(0,\lambda^*)}\int_{-\log u}^{\zeta_{t,y}}e^{-\lambda^*(y-x)+z}\rho_{(\zeta_{t,y}-z)/\gamma(\lambda^*)}\big(u+\{m_t+y-u\}\big) \nonumber \\
\label{asymp:upper:eq1} &\qquad\qquad\qquad\qquad\cdot N^{3/2}\bP_{\lambda^*}^x\Big(\lambda^*y+W_N^{(N)}\in dz,\,F_{t,y}\setminus D_{t,y}\Big),
\end{align}
where $\zeta_{t,y}:=\lambda^*\big(u+\{m_t+y-u\}\big)+\frac32\log\frac tN$ and $W_k^{(N)}$ is the analogue of $W_k^{(N,v)}$ when $\{T_k^{(v)}\}$ is replaced by $\{T_k\}$; recall that $W_k^{(N)}= \gamma(\lambda^*)S_k^{(N)}$ when $d_N=-\frac{3}2{\gamma(\lambda^*)}\frac{\log N}N$.

We split the integral on the right hand side of \eqref{asymp:upper:eq1} into the regions $[-\log u, u^{1/3}]$, $[u^{1/3}, u^{2/3}]$, and $[u^{2/3},\zeta_{t,y}]$. For the first and third of these, we ignore the $D_{t,y}$ term altogether and write
\[
N^{3/2}\bP_{\lambda^*}^x\Big(\lambda^*y+W_N^{(N)}\in [z,z+1], \, F_{t,y}\Big) \le Cy(z+\log u)\vee1
\]
by \eqref{eq:ballot-Sk-concave} of Lemma~\ref{ballot:Sk}. Moreover, observe that
\[
u+\{m_t+y-u\} = v^*\left(\frac{\zeta_{t,y}-z}{\gamma(\lambda^*)}\right) + z/\lambda^* + \frac3{2\lambda^*}\log\frac tN,
\]
and hence
\begin{equation}\label{asymp:upper:eq2}
\rho_{(\zeta_{t,y}-z)/\gamma(\lambda^*)}\big(u+\{m_t+y-u\}\big) \le C(z+\tfrac32\log u)(\zeta_{t,y}-z)^{-3/2}e^{-z}g_{u,\delta}(z)
\end{equation}
by \eqref{eq:bounds-cor-upper} of Corollary~\ref{bounds:cor}. Combining these two estimates, we see that the integral of the right hand side of \eqref{asymp:upper:eq1} is bounded by
\begin{equation}\label{asymp:upper:eq3}
Cye^{-\lambda^*y}u^{-1/2}\log u
\end{equation}
in the region $[-\log u,u^{1/3}]$, and by
\begin{equation}\label{asymp:upper:eq4}
Cye^{-\lambda^*y}\sum_{i=\lfloor u^{2/3}\rfloor}^\infty i^2e^{-\delta i} \le Cye^{-\lambda^* y}e^{-\delta u^{2/3}/2}
\end{equation}
in the region $[u^{2/3},\zeta_{t,y}]$.

In the region $[u^{1/3},u^{2/3}]$, we note that the integrand is bounded and Lipschitz (since $\rho$ is a differentiable function of $s$), and so satisfies \eqref{F:assm}; hence, by \eqref{eq:ballot-Sk-limsup} and \eqref{eq:ballot-Sk-limit} of Lemma~\ref{ballot:Sk}, we have
\begin{multline}\label{asymp:upper:eq5}
\int_{u^{1/3}}^{u^{2/3}}e^{-\lambda^*(y-x)+z}\rho_{(\zeta_{t,y}-z)/\gamma(\lambda^*)}\big(u+\{m_t+y-u\}\big)N^{3/2}\bP_{\lambda^*}^x\Big(\lambda^*y+W_N^{(N)}\in dz, \, F_{t,y}\setminus D_{t,y}\Big) \\
\le 2\delta_{u^{1/3}}\int_{u^{1/3}}^{u^{2/3}}e^{-\lambda^*(y-x)+z}\rho_{(\zeta_{t,y}-z)/\gamma(\lambda^*)}\big(u+\{m_t+y-u\}\big)\\
\cdot N^{3/2}\bP_{\lambda^*}^x\Big(\lambda^*y+W_N^{(N)}\in dz, \, D_{t,y}\Big)
\le 2\delta_{u^{1/3}}\E^x\big[\Lambda_{t,y}\big],
\end{multline}
for sufficiently large $t$, where $\delta_z\to0$ as $z\to\infty$. Plugging \eqref{asymp:upper:eq3}--\eqref{asymp:upper:eq5} into \eqref{asymp:upper:eq1}, we deduce
\begin{equation*}
    \E^x\big[ \Xi_{t,y}\big] \le Cu^{-1/4}ye^{-\lambda^*y} + \big(1+2\delta_{u^{1/3}}\big)\E^x\big[\Lambda_{t,y}\big]
\end{equation*}
for $t$ sufficiently large. To complete the proof, it thus suffices to show
\begin{equation}\label{asymp:upper:eq6}
    \E^x\big[\Lambda_{t,y}\big] \ge Cye^{-\lambda^*y}.
\end{equation}
But this is straightforward -- using the same steps that led to \eqref{asymp:upper:eq1}, we have
\begin{multline}\label{asymp:expecation:eq}
\E^x\big[\Lambda_{t,y}\big] = \frac{\psi(x,\lambda^*)}{\psi(0,\lambda^*)}\int_0^{\zeta_{t,y}}e^{-\lambda^*(y-x)+z}\rho_{(\zeta_{t,y}-z)/\gamma(\lambda^*)}\big(u+\{m_t+y-u\}\big) \\
\cdot N^{3/2}\bP_{\lambda^*}^x\Big(\lambda^*y+W_N^{(N)}\in dz, \, D_{t,y}\Big),
\end{multline}
so restricting to the interval $[1,\sqrt{u}]$ and applying \eqref{eq:ballot-Sk-lower} of Lemma~\ref{ballot:Sk} and \eqref{eq:bounds-cor-lower} of Corollary~\ref{bounds:cor} (in analogue to the computation in \eqref{asymp:upper:eq2}), we deduce
\begin{align*}
\E^x\big[\Lambda_{t,y}\big] &\ge Ce^{-\lambda^*y}\sum_{i=1}^{\lfloor\sqrt{u}\rfloor} e^i\cdot(i+\tfrac32\log u)u^{-3/2}e^{-i}\cdot yi \\
&\ge Cye^{-\lambda^*y}u^{-3/2}\sum_{i=1}^{\lfloor \sqrt{u}\rfloor} i^2 \\
&\ge Cye^{-\lambda^*y},
\end{align*}
demonstrating \eqref{asymp:upper:eq6} and completing the proof.
\end{proof}

Since \eqref{ascending} implies that we have $\Lambda_{t,y}\le\Delta_{t,y}\le\Xi_{t,y}$ for sufficiently large $t,y$, Lemma~\ref{asymp:upper} implies that all three expectations are asymptotically equivalent. This allows us to find an upper bound of $\P^x\Big(M_t>m_t+y\Big)$ of the form $\big(1+o(1)\big)\E^x\big[\Lambda_{t,y}\big]$. In order to find a corresponding lower bound, we use a second moment method. The second moment is controlled in the following Lemma.

\begin{lemma}\label{asymp:lower}
One has
\begin{equation}\label{asymp:lowerbd:eq}
    \lim_{y\to\infty}\limsup_{t\to\infty}\sup_{x\in[0,1)}\frac{\E^x\big[\Delta_{t,y}^2\big]}{\E^x\big[\Delta_{t,y}\big]} = 1.
\end{equation}
\end{lemma}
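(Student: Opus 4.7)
The plan is to expand $\Delta_{t,y}^2$ as the diagonal sum plus off-diagonal cross terms. Writing $A^{(v)}_{t,y}:=E^{(v)}_{t,y}\cap\{M^{(v)}_t>m_t+y\}$, the diagonal equals $\Delta_{t,y}$ (indicators are idempotent), and from \eqref{asymp:upper:eq6} combined with Lemma~\ref{asymp:upper} we know $\E^x[\Delta_{t,y}]\asymp ye^{-\lambda^*y}$ uniformly in $x\in[0,1)$. It thus suffices to show
$$\E^x\Big[\sum_{v\ne w,\,v,w\in\cN_{T_N}}\one_{A^{(v)}_{t,y}}\one_{A^{(w)}_{t,y}}\Big]=o\big(ye^{-\lambda^*y}\big)$$
as $t\to\infty$ followed by $y\to\infty$, uniformly in $x$.

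The first step is to condition on the stopping-line $\sigma$-field $\cF_{T_N}$. For distinct $v,w\in\cN_{T_N}$, the sub-BBMs rooted at $v$ and $w$ after $T_N^{(v)}$ and $T_N^{(w)}$ are conditionally independent given $\cF_{T_N}$; since $N=\lfloor m_t+y\rfloor\in\N$ and $g$ is $1$-periodic, each conditional probability $\P^x\big(M^{(v)}_t>m_t+y\,\big|\,\cF_{T_N}\big)$ equals $\rho_{t-T_N^{(v)}}(m_t+y-N)$, with $\rho$ as in \eqref{def:rho}. Setting $\bar h(\tau):=\rho_{t-\tau}(m_t+y-N)\one_{\{\tau<t\}}$ and $I_s:=(-\infty,q_ts+y+c_0]$, the cross term then matches the functional of \eqref{mtt:hitting} in Lemma~\ref{lem:many-to-two}, and equals
$$2\int_0^\infty\!\!\int_{I_s}g(z)\,\Psi(s,z)^2\,p(s,x,dz)\,ds,\quad\Psi(s,z):=\E^z\Big[\sum_{v\in\cN_{T_N}}\bar h(s+T_N^{(v)})\one_{\{X^{(v)}_u\in I_{s+u},\,u\le T_N^{(v)}\}}\Big].$$

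The second step is to estimate $\Psi(s,z)$ by the chain of arguments from \eqref{asymp:upper:eq1}--\eqref{asymp:upper:eq5}: linearise via Lemma~\ref{lem:many-to-one}, absorb the branching weight into the prefactor $e^{-\lambda^*(N-z)}$ via Lemma~\ref{tilt}, switch to the renewal random walk $S_k$ of \eqref{Sdef} through $W_k^{(N)}=\gamma(\lambda^*)S_k^{(N)}$, and combine the ballot bound \eqref{eq:ballot-Sk-upper} with the upper tail estimate \eqref{eq:bounds-cor-upper} of Corollary~\ref{bounds:cor} on $\rho$. This produces a bound of the form $\Psi(s,z)\le C\,e^{-\lambda^*(N-z)}(y+c_0+q_ts-z)\,\mathcal R(s,t,z)$, with $\mathcal R$ collecting inverse-polynomial decay in $t-s$ and the Gaussian-tail factor $g_{\cdot,\delta}$. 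Substituting $\Psi^2$ back into the double integral and tilting $p(s,x,dz)$ again via Lemma~\ref{tilt}, the exponentials telescope into a global prefactor $e^{-\lambda^*(y-x)}$, leaving a weighted double integral of polynomial ballot-type factors against the law of the constrained tilted walk.

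The final step is to bound this integral by $o(ye^{-\lambda^*y})$. Following the decomposition scheme of Lemma~\ref{asymp:upper}, I would split the branching time $s$ into an early band $0\le s\le u$, a bulk $u\le s\le t-u$, and a late band $s\ge t-u$, with $u=u(y)\to\infty$ as in \eqref{u:assm}. The early and late bands are controlled by $g_{\cdot,\delta}$, which forces super-polynomial decay outside $N-z\in[-\log u,u^{2/3}]$, exactly as in \eqref{asymp:upper:eq3}--\eqref{asymp:upper:eq4}. In the bulk, the sharp asymptotics \eqref{eq:ballot-Sk-limit}--\eqref{eq:ballot-Sk-limsup} of Lemma~\ref{ballot:Sk} supply matching polynomial factors, and after exponential cancellation the contribution is $O(u^{-\alpha}ye^{-\lambda^*y})$ for some $\alpha>0$. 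The main obstacle is the integrability near $z=N$, where each descendant carries a ballot cost $(q_ts+y+c_0-z)/(N-z)^{3/2}$ and naive squaring threatens a $z$-divergence; the remedy is that for two descendants to reach $m_t+y$ after branching at height $z$ close to $N$, the joint event carries an extra $e^{-2\lambda^*(N-z)}$ weight, against which tilting $p$ restores only a single $e^{+\lambda^*(N-z)}$, leaving a net exponential suppression $e^{-\lambda^*(N-z)}$ that dominates the polynomial blow-up and delivers the required $o(1)$ improvement for the ratio to tend to $1$ rather than merely stay bounded.
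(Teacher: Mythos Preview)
Your first two steps --- splitting off the diagonal, applying the stopping-line Many-to-Two Lemma to the cross terms, and bounding each $\Psi(s,z)$ through the Many-to-One/tilt/ballot chain combined with Corollary~\ref{bounds:cor} --- are correct and match the paper exactly.

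The gap is in your value of $N$ and, as a consequence, in where the $o(1)$ comes from. In the definitions of $D_{t,y}^{(v)},E_{t,y}^{(v)},F_{t,y}^{(v)}$ one has $N=\lfloor m_t+y-u\rfloor$, not $\lfloor m_t+y\rfloor$; this is precisely why the auxiliary scale $u=u(y)\to\infty$ of \eqref{u:assm} was introduced. With the correct $N$, after a particle hits the stopping line it still has a residual distance $m_t+y-N\approx u$ to cover, so the factor $\rho_{t-s-T_N^{(v)}}(m_t+y-N)$ inside $\bar h$ is evaluated at a target of order $u$ and contributes, via Corollary~\ref{bounds:cor}, an extra negative power of $u$. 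Squaring $\Psi$ doubles this gain, and the remaining $(s,z)$-integral is then controlled exactly as in the second-moment computation of Lemma~\ref{lem:lowerbd}, yielding $\E^x[\Delta_{t,y}^2-\Delta_{t,y}]\le C\,u^{-\alpha}\,ye^{-\lambda^*y}$ for some $\alpha>0$. No splitting on $s$ is needed, and only the crude upper bounds \eqref{eq:ballot-Sk-upper} and \eqref{eq:ballot-Yt-upper} are used.

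The mechanism you single out --- the net $e^{-\lambda^*(N-z)}$ left after squaring and tilting $p$ back --- is real, but it is exactly what already makes the second moment in Lemma~\ref{lem:lowerbd} comparable to the first; it gives boundedness of the ratio, not the extra $o(1)$. Likewise, the sharp limits \eqref{eq:ballot-Sk-limit}--\eqref{eq:ballot-Sk-limsup} play no role here: an upper bound on a second moment never requires exact asymptotics. With your $N=\lfloor m_t+y\rfloor$ the residual $\rho$-term is $\rho_{O(1)}(\{m_t+y\})=O(1)$, which is why you were forced to hunt for smallness elsewhere; with the correct $N$ the smallness is built into $\bar h$ from the outset.
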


\begin{proof}
Throughout, we abbreviate $\tilde y = y+c_0$. By  the Many-to-Two Lemma \\  (Lemma~\ref{lem:many-to-two}), we have
\begin{align}
    \E^x&\big[\Delta_{t,y}^2-\Delta_{t,y}\big] \nonumber \\
    &=2\int_0^tds\int_{-\infty}^{(q_ts+\tilde y)\wedge N}g(z)\E^x\left[\sum_{v\in\cN_s}\one_{\{T_N^{(v)}>s,X_s^{(v)}\in dz,\max_{w\le s}(X^{(v)}_w-q_tw)\le\tilde y\}}\right] \nonumber \\
    \label{asymp:lowerbd:eq1}\cdot &\left(\E^z\left[\sum_{v\in\cN_{T_N}}\rho_{t-s-T_N^{(v)}}\big(m_t+y-N\big)\one_{\{T_N^{(v)}<t-s,\,\max_{w\le T_N^{(v)}}(X_w^{(v)}-q_t(s+w))\le\tilde y\}}\right]\right)^2.
\end{align}
We now bound this last expectation, which we abbreviate as $\Psi_{t,s}(y,z)$. For $\zeta=q_ts+y-z'$, one applies the Many-to-One Lemma (Lemma~\ref{lem:many-to-one}) followed by Lemma~\ref{tilt} to find
\begin{multline*}
    \Psi_{t,s}(y,z) 
    = \bE^{q_ts+\tilde y-\zeta}_{\lambda^*}\bigg[e^{-\lambda^*(N-Y_0)+\gamma(\lambda^*)T_N}\rho_{t-s-T_N}(m_t+y-N)\,;\\
    \qquad\qquad T_N<t-s,\,\max_{w\le T_N}\big( Y_w - q_tw - Y_0\big) \le \zeta\bigg] \\
   \!\!\!\!\!\!\!\!\!\!\!\!\!\!\!\!\!\!
   \!\!\!\!\!\!\!\!\!\!\!\!\!\!\!\!\!\!
   =\int_0^{u+\{m_t+y-u\}-c_0} e^{-\lambda^*(\zeta-\xi)}\rho_{(u+\{m_t+y-u\}-\xi)/q_t}\big(u+\{m_t+y-u\}\big)\\
  \qquad\cdot \bE^{q_ts+\tilde y-\zeta}\left[e^{\frac{3T_N}{2t}\log t};\zeta - (N-q_tT_N-Y_0)\in d\xi,\,\max_{w\le T_N}\big( Y_w-q_tw-Y_0\big)\le \zeta\right].
\end{multline*}
As in the steps leading to \eqref{asymp:upper:eq2}, we apply \eqref{eq:bounds-cor-upper} of Corollary~\ref{bounds:cor} to find
\begin{multline*}
\rho_{(u+\{m_t+y-u\}-\xi)/q_t}\big(u+\{m_t+y-u\}\big) \\
\le C(u+\{m_t+y-u\}-\xi)^{-3/2}(\xi+\log u)e^{-\lambda^*\xi}g_{u,\delta}(\xi).
\end{multline*}
Plugging this into the above display and applying \eqref{eq:ballot-Sk-upper} of Lemma~\ref{ballot:Sk} (noting, as in the proof of Lemma~\ref{lem:lowerbd}, that $k-q_tT_k=-q_t(S_k^{(t)}-k\widehat d_t)$ with $0\le \widehat d_t\le c_0\log t/t$), we have
\begin{align*}
    \Psi_{t,s}(y,z) &\le Ce^{-\lambda^*\zeta}\sum_{i=0}^{\lfloor u'-c_0\rfloor} (u'-i)^{-3/2}(i+\log u)g_{u,\delta}(i)e^{\frac{3(t-s)}{2t}\log t} \\
    &\qquad\cdot \bP_{\lambda^*}^{q_ts+\tilde y - \zeta}\left(\zeta - (N-q_tT_N-Y_0)\in [i,i+1],\,\max_{k\le N}(q_tT_k-k-Y_0) \le \zeta\right) \\
    &\le  C\zeta e^{-\lambda^*\zeta}\frac{e^{\frac{3(t-s)}{2t}\log t}}{(N-\lfloor q_ts+\tilde y-\zeta\rfloor)^{3/2}}\sum_{i=0}^{\lfloor u'-c_0\rfloor} (u'-i)^{-3/2}(i+1)(i+\log u)g_{u,\delta}(i),
\end{align*}
where $u'=u+\{m_t+y-u\}$. By splitting the sum into $i\le u/2$ and $i>u/2$, one sees that it is bounded by $Cu^{-3/2}$.
Plugging this bound back into \eqref{asymp:lowerbd:eq1}, we have
\begin{align}
    &\E^x\big[\Delta_{t,y}^2-\Delta_{t,y}\big] \nonumber \\
    &\le C\int_0^t\!\int_0^\infty \E^x\left[\sum_{v\in\cN_s}\one_{\{\tilde y - (X_s^{(v)}-q_ts) \in d\zeta,\,\max_{w\le s}(X_w^{(v)}-q_tw)\le\tilde y\}}\right]\nonumber\\
    &\qquad\qquad\qquad\qquad\qquad\qquad \cdot \frac{(\zeta\vee1)^2e^{-2\lambda^*\zeta}e^{3(t-s)\log t/t}}{u^3[(N-\lfloor q_ts+\tilde y-\zeta\rfloor)\vee1]^3}ds \nonumber \\
    \label{asymp:lowerbd:eq2}&\le C\int_0^t\sum_{j=1}^\infty \E^x\left[\sum_{v\in\cN_s}\one_{\{\tilde y - (X_s^{(v)}-q_ts)\in [j-1,j], \, \max_{w\le s}(X_w^{(v)} - q_tw)\le \tilde y\}}\right]
    \frac{j^2e^{-2\lambda^*j}e^{3(t-s)\log t/t}}{u^3[(t-s)\vee1]^3}ds,
\end{align}
where the last line follows since $N-\lfloor q_ts+\tilde y -\zeta\rfloor = q_t(t-s) + \zeta + O(1)$. Now using the same steps as in the analogous calculation in Lemma~\ref{lem:lowerbd}, one has
\begin{equation*}
\E^x\left[\sum_{v\in\cN_s}\one_{\{\tilde y - (X_s^{(v)}-q_ts)\in [j-1,j], \, \max_{w\le s}(X_w^{(v)} - q_tw)\le \tilde y\}}\right] \\
\le \frac{Cyje^{-\lambda^*(y-j)}e^{\frac{3s}{2t}\log t}}{(s\vee1)^{3/2}},
\end{equation*}
and so plugging this back into \eqref{asymp:lowerbd:eq2}, we deduce
\begin{align*}
    \E^x\big[\Delta_{t,y}^2-\Delta_{t,y}\big] &\le Cu^{-3}ye^{-\lambda^*y}\int_0^t\frac{t^{3/2}e^{\frac{3(t-s)}{2t}\log t}}{[s\vee1]^{3/2}[(t-s)\vee1]^3\vee1}\sum_{j=1}^\infty j^3e^{-\lambda^*j} ds\\
    &\le Cu^{-3}ye^{-\lambda^*y}\int_0^t\frac{t^{3/2}}{[s(t-s)]^{3/2}\vee1} ds \\
    &\le Cu^{-3/2}ye^{-\lambda^*y}\int_0^t\frac1{[s\wedge(t-s)]^{3/2}\vee1}ds \le Cu^{-3}ye^{-\lambda^*y}.
\end{align*}
Finally, by \eqref{asymp:upper:eq6}, we have
\[
\E^x\big[\Delta_{t,y}\big] \ge \E^x\big[\Gamma_{t,y}\big] \ge Cye^{-\lambda^*y},
\]
and so it follows that
\[
\E^x\big[\Delta_{t,y}\big] \le \E^x\big[\Delta_{t,y}^2\big] \le \E^x\big[\Delta_{t,y}\big] + Cu^{-3}ye^{-\lambda^*y} \le (1+Cu^{-3})\E^x\big[\Delta_{t,y}\big],
\]
implying \eqref{asymp:lowerbd:eq} and completing the proof
\end{proof}

With Lemmas~\ref{asymp:upper} and \ref{asymp:lower}, we are able to prove Lemma~\ref{asymp:equiv}.

\begin{proof}[\textit{Proof of Lemma~\ref{asymp:equiv}}]
Observe that
\[
\P^x\Big(M_t>m_t+y\Big) \le \E^x\big[\Xi_{t,y}\big] + \P^x\big(G_{N,\lambda^*y+\log u}\big),
\]
where $G_{N,z}$ is as in Lemma~\ref{upperbd:stop}. Applying this Lemma along with \eqref{asymp:upper:eq6}, one has
\[
\P^x\big(G_{N,\lambda^*y+\log u}\big) \le C(y+\log u)u^{-1}e^{-\lambda^*y} \le Cu^{-1}\E^x\big[\Lambda_{t,y}\big],
\]
where we also used $\log u\le u\le y$. Thus, by Lemma~\ref{asymp:upper} we have
\begin{multline}\label{asymp:equiv:eq1}
    \limsup_{y\to\infty}\limsup_{t\to\infty}\sup_{x\in[0,1)}\frac{\P^x\Big(M_t>m_t+y\Big)}{\E^x\big[\Lambda_{t,y}\big]}
    \\
    \le \limsup_{y\to\infty}\limsup_{t\to\infty}\sup_{x\in[0,1)}\left(\frac{\E^x\big[\Xi_{t,y}\big]}{\E^x\big[\Lambda_{t,y}\big]} + Cu^{-1}\right) = 1.
\end{multline}
For the lower bound, we have
\[
\P^x\Big(M_t>m_t+y\Big) \ge \frac{\E^x\big[\Delta_{t,y}\big]^2}{\E^x\big[\Delta_{t,y}^2\big]} \ge \E^x\big[\Lambda_{t,y}\big]\cdot\frac{\E^x\big[\Delta_{t,y}\big]}{\E^x\big[\Delta_{t,y}^2\big]},
\]
and so by Lemma~\ref{asymp:lower} we have
\begin{equation}\label{asymp:equiv:eq2}
\liminf_{y\to\infty}\liminf_{t\to\infty}\inf_{x\in[0,1)}\frac{\P^x\Big(M_t>m_t+y\Big)}{\E^x\big[\Lambda_{t,y}\big]} \ge \liminf_{y\to\infty}\liminf_{t\to\infty}\inf_{x\in[0,1)}\frac{\E^x\big[\Delta_{t,y}\big]}{\E^x\big[\Delta_{t,y}^2\big]} = 1.
\end{equation}
Combining \eqref{asymp:equiv:eq1} and \eqref{asymp:equiv:eq2} yields \eqref{eq:asymp:equiv}, completing the proof.
\end{proof}

With Lemma~\ref{asymp:equiv} at our disposal, we may proceed with the proof of Proposition~\ref{righttail}.

\begin{proof}[\textit{Proof of Proposition~\ref{righttail}}]
Recall from \eqref{asymp:expecation:eq} that
\begin{multline*}
\E^x\big[\Lambda_{t,y}\big] = \frac{\psi(x,\lambda^*)}{\psi(0,\lambda^*)}\int_0^{\zeta_{t,y}}e^{-\lambda^*(y-x)+z}\rho_{(\zeta_{t,y}-z)/\gamma(\lambda^*)}\big(u+\{m_t+y-u\}\big) \\
\cdot N^{3/2}\bP_{\lambda^*}^x\Big(\lambda^*y+W_N^{(N)}\in dz, \, D_{t,y}\Big),
\end{multline*}
where $\zeta_{t,y}=\lambda^*(u+\{m_t+y-u\})+\frac32\log\frac tN$. The idea now is to let $t\to\infty$ followed by $y\to\infty$ in such a way that $\{m_t\}$ and $\{y\}$ are fixed. To that end, for $p,q\in[0,1)$, let
\[
t_0^p:=1,\quad t_{n+1}^p:=\min\big\{t>t_n^p\,:\,\{m_t\}=p\big\}
\]
and
\[
y_k^q:=k+q.
\]
So as to not burden the notation, we will usually suppress the dependence on $n$ and $k$, but we must careful to note that each convergence is uniform in $p$ and $q$.

Observe along these sequences, one has
\[
\xi_{u,p+q}=\lim_{t\to\infty}\zeta_{t,y}=\lambda^*(u+\{p+q-u\})+\frac32\log v^*,
\]
with the convergence being uniform in $p$ and $q$. Moreover,
\[
z\mapsto e^z\rho_{(\xi_{u,p+q}-z)/\gamma(\lambda^*)}\big(u+\{p+q+u\}\big)
\]
is a positive, bounded, Lipschitz function on $[0,\xi_{u,p+q}]$, with upper bound and Lipschitz constant independent of $p$ and $q$, but possibly depending on $u$. Thus, by \eqref{eq:ballot-Sk-limit} of Lemma~\ref{ballot:Sk}, we have
\begin{multline*}%\label{righttail:eq1}
    \lim_{n\to\infty}\E^x\big[\Lambda_{t^p_n,y}\big] = \frac{\psi(x,\lambda^*)}{\psi(0,\lambda^*)}\int_0^{\xi_{u,p+q}}e^{-\lambda^*(y-x)+z}\rho_{(\xi_{u,p+z}-z)/\gamma(\lambda^*)}\big(u+\{p+q-u\}\big) \\
    \cdot \beta^*V^x\left(\frac y{v^*}\right)V\left(\frac z{\gamma(\lambda^*)}\right)dz
\end{multline*}
uniformly in $x,p$, and $q$. In particular, since $V^x(y/v^*)/y\to1/v^*$ as $y\to\infty$ uniformly in $x\in[0,1)$, this implies
\begin{multline*}
\!\!\!\!\!\!\!\!
\frac{E^x\big[\Lambda_{t,y}\big]}{\psi(x,\lambda^*)ye^{-\lambda^*(y-x)}} \sim \frac{\beta^*}{v^*\psi(0,\lambda^*)} \int_0^{\xi_{u,p+q}}V\left(\frac z{\gamma(\lambda^*)}\right)e^z\rho_{(\xi_{u,p+q}-z)/\gamma(\lambda^*)}\big(u+\{p+q-u\}\big)dz \\
:=F(u,p+q)
\end{multline*}
as $t\to\infty$ followed by $y\to\infty$ along the sequences $\{t_n^p\}$, $\{y_k^q\}$, and the convergence is uniform in $x$, $p$ and $q$. Hence, by Lemma \ref{asymp:equiv}, we have
\begin{equation}\label{righttail:eq1}
    \lim_{k\to\infty}\limsup_{n\to\infty}\sup_{x,p,q\in[0,1)}\left|\frac{\P^x\Big(M_{t_n^p}>m_{t_n^p}+y_k^q\Big)}{\psi(x,\lambda^*)y_k^qe^{-\lambda^*(y_k^q-x)}} - F(u,p+q)\right| =0.
\end{equation}
Notice, however, that the first term does not depend on $u$ at all, whereas the only dependence the second term has on $t,y$ is in its dependence on $u$. Thus, by taking different $u,u'$ satisfying \eqref{u:assm}, we see that
\[
\lim_{u,u'\to\infty}\sup_{p,q\in[0,1)}\left|\frac{F(u,p+q)}{F(u',p+q)}-1\right|=0,
\]
and hence there exists $\nu(p+q)$ such that $\lim_{u\to\infty}F(u,p+q)=\nu(p+q)$, uniformly in $p$ and $q$.

To see that 
$\nu$ is continuous and $1$-periodic, observe that $F(u,w)$ is $1$-periodic in $w$ and continuous except possibly at points for which $w-u$ is an integer; for fixed $w_0$, $\epsilon>0$, one can let $u\to\infty$ along points at which $w-u$ is not an integer for any $w\in(w_0-\epsilon,w_0+\epsilon)$, and then since the convergence is uniform one sees that $\nu$ is continuous on $(w_0-\epsilon,w_0+\epsilon)$. The positivity of $\nu$ then follows from Lemma~\ref{lem:lowerbd}, completing the proof.
\end{proof}

\section{Proof of the Main Results}
\label{sec:mainresults}
With Proposition~\ref{righttail} at our disposal, Theorem~\ref{mainthm:law} follows by a standard cutting argument.

\begin{proof}[\textbf{\emph{Proof of Theorem~\ref{mainthm:law}}}]
We will show a slightly more general version, namely that there exists a positive random variable $Z_\infty$ such that
\begin{equation}\label{eq:general}
    \lim_{t\to\infty}\sup_{x,y\in\R}\bigg|\P^x\Big(M_t-x \le m_t+y\Big) - \E^x\Big[\exp\left\{-Z_\infty\nu(m_t+x+y)e^{-\lambda^*y}\right\}\Big]\bigg| = 0,
\end{equation}

where $\nu$ is the function in Proposition~\ref{righttail}.

Let $\Phi^x_s(z):=\P^x\Big(M_s-x>m_s+z\Big)$. 
For $s<t$, we have
\begin{align}
    \P^x\Big(M_t-x \le m_t + y\Big) %\\
    &=\E^x\left[\prod_{v\in\cN_s}\P^x\Big(M_t^{(v)} \le m_t + x + y\,\big|\,\cF_s\Big) \right] \nonumber \\
    &=\E^x\left[\prod_{v\in\cN_s}\Big(1-\Phi^{X_s^{(v)}}_{t-s}\big(y+H_s^{(t,v)} \big)\Big)\right] \nonumber \\
    \label{general:eq1} &=\E^x\left[\exp\left\{\sum_{v\in\cN_s}\log\Big(1-\Phi^{X_s^{(v)}}_{t-s}\big(y+H_s^{(t,v)} \big)\Big)\right\}\right]
\end{align}
where $H_s^{(t,v)}:=m_t+m_{t-s}-(X_s^{(v)}-X_0^{(v)})$. Define also $H_s^{(v)}:=v^*s-(X_s^{(v)}-X_0^{(v)})=\lim_{t\to\infty}H_s^{(t,v)}$. Consider the event
\begin{equation}
    \label{eq-As}
A_s:=\left\{\min_{v\in\cN_s}H_s^{(v)} \ge \frac1{4\lambda^*}\log s\right\};
\end{equation}
an application of Corollary~\ref{lem:lowerbd} shows that
\[
\epsilon_s:=\sup_{x\in\R}\P^x\big(A_s^c\big) \to 0\quad\text{as }s\to\infty.
\]
Now fix $\delta>0$. By a Taylor expansion, there exists $\delta'>0$ such that
\begin{equation*}
    \left|\frac{\log(1+z)}z-1\right| < \delta
\end{equation*}
whenever $|z|<\delta'$. By Corollary~\ref{upperbd:cor} and Proposition~\ref{righttail}, there exists $z_0$ such that
\begin{equation*}
    \limsup_{s\to\infty}\left|\frac{\Phi_s^x(z)}{\psi(x,\lambda^*)\nu(m_s+x+z)ze^{-\lambda^*z}} - 1\right|\vee \Phi_s^x(z) < \delta\wedge\delta'
\end{equation*}
for all $x\in\R$ and $z\ge z_0$. Plugging the previous two displays into \eqref{general:eq1}, we find that, for all $s \ge e^{4\lambda^*(z_0-y)}$ and each $p\in[0,1)$,
\begin{multline}\label{general:eq2}
    \limsup_{n\to\infty}\P^x\Big(M_{t_n^p}-x \le m_{t_n^p} + y\Big) \\
    \le \E^x\left[\exp\left\{-e^{-\lambda^*y}\nu(p+x+y)(1-\delta)\big(\Theta_s+y\Gamma_s\big)\right\}\,;\,A_s\right] 
    +\epsilon_s
\end{multline}
and
\begin{multline}\label{general:eq3}
    \liminf_{n\to\infty}\P^x\Big(M_{t_n^p}-x \le m_{t_n^p} + y\Big) \\
    \ge \E^x\left[\exp\left\{-e^{-\lambda^*y}\nu(p+x+y)(1+\delta)\big(\Theta_s+y\Gamma_s\big)\right\}\,;\,A_s\right] 
    -\epsilon_s,
\end{multline}
where $\{t_n^p\}$ is defined in the proof of Proposition~\ref{righttail}, and
\begin{equation}\Gamma_s:=\sum_{v\in\cN_s}\psi(X_s^{(v)},\lambda^*)e^{-\lambda^*H_s^{(v)}},\qquad
 \Theta_s:=\sum_{v\in\cN_s}\psi(X_s^{(v)},\lambda^*)H_s^{(v)}e^{-\lambda^*H_s^{(v)}}.\label{eq-thetas}
\end{equation}
Combining \eqref{general:eq2} and \eqref{general:eq3}, and noting also the inequality
\[
\frac{\Gamma_s}{\Theta_s}\one_{A_s} \le \frac{4\lambda^*}{\log s},
\]
we deduce that
\begin{equation*}
    \lim_{n\to\infty}\P^x\Big(M_{t_n^p}-x\le m_{t_n^p} + y\Big)
    =\lim_{s\to\infty}\E^x\left[\exp\left\{-e^{-\lambda^*y}\nu(p+x+y)\Theta_s\one_{A_s}\right\}\right],
\end{equation*}
uniformly in $x$ and $p$, and in particular both limits exist. Since $\nu$ is continuous and $\lim_{y\to+\infty}e^{-\lambda^*y}=0$, $\lim_{y\to-\infty}e^{-\lambda^*y}=+\infty$, one has that $\E^x\big[e^{-\eta \Theta_s\one_{A_s}}\big]$ converges for all $\eta>0$, and hence by a standard argument that 
\begin{equation}
    \label{eq-thetadef}
\Theta_s\one_{A_s}\Rightarrow \Theta
\end{equation}
for some non-negative random variable $\Theta$ (recall that $A_s$ and 
$\Theta_s$  are defined in \eqref{eq-As} and \eqref{eq-thetas}). Thus, we have shown
\begin{equation}\label{general:eq4}
    \lim_{n\to\infty}\P^x\Big(M_{t_n^p}-x\le m_{t_n^p} + y\Big)
    =\E^x\left[\exp\left\{-\Theta\nu(p+x+y)e^{-\lambda^*y}\right\}\right]
\end{equation}
uniformly in $p$ and $x$. Finally, since $\nu$ is continuous, the right hand side of \eqref{general:eq4} is a continuous function of $y$, and thus the convergence is also uniform in $y$. This completes the proof.
\end{proof}

With \eqref{eq:general}, Corollary~\ref{maincor:pulsating} follows easily.

\begin{proof}[\textbf{Proof of Corollary~\ref{maincor:pulsating}}]
Recall that, if $u(t,x)$ solves the F-KPP equation in a periodic medium as given in~\eqref{eq:kpp-periodic} then 
\[ u(t,x)=\P^x\left(\min_{v\in\cN_t} X_t^{(v)} \geq 0\right) = \P^x\left(\max_{v\in\cN_t}\big(-X_t^{(v)}\big) \le 0\right). \]
Since $\{-X_t^{(v)}\}_{t\ge0, v\in\cN_t}$ is also a Branching Brownian Motion (with branching rates given by the function $x\mapsto g(-x)$, which does not change the eigenvalue $\gamma(\lambda)$ in~\eqref{eigen}, and hence does not change $v^*$ or $\lambda^*$), one can apply the general form \eqref{eq:general} or Theorem~\ref{mainthm:law} to deduce
\[
\lim_{t\to\infty}\sup_{x\in\R}\left|u(t,m_t+x) - \E^{m_t+x}\left[\exp\Big\{-\overline \Theta\overline\nu(0)e^{-\lambda^*x}\Big\}\right]\right|=0,
\]
where $\overline \Theta$ and $\overline\nu$ are the analogues of $\Theta$ and $\nu$, respectively, for the Branching Brownian Motion $\{-X_t^{(v)}\}_{t\ge0,v\in\cN_t}$. In particular, using the change of variables $x\to x-m_t$, we find
\[
\lim_{t\to\infty}\sup_{x\in\R}\Big|u(t,x)-U(m_t/v^*,x)\Big| = 0
\]
where
\begin{equation}
    \label{eq-Udef}
U(z,x):=\E^x\left[\exp\Big\{-\overline \Theta\overline\nu(0)e^{-\lambda^*(x-v^*z)}\Big\}\right].
\end{equation}
One can note in the proof of Theorem~\ref{mainthm:law} that $\Theta_s$ has the same law under $\P^x$ as under $\P^{x+1}$, and so the same is true for $\Theta$, and hence also $\overline \Theta$. It follows that $U$ satisfies $U(z+1/v^*,x)=U(z,x-1)$, completing the proof.
\end{proof}

\section{Extensions}
\label{sec:extensions}
This problem can be extended in a number of possible directions. In this section, we briefly explore two of these possibilities: the case where the inhomogeneity is not just in the branching rate, but also in the offspring distribution and the underlying dynamics of each particle; and the analogous discrete version of this latter case, that is, a Branching Random Walk with spatially dependent offspring distribution.

We note that, throughout this section, we will define measure $\P^x$, $\bP^x$, and $\bP_\lambda^x$. Much like in our main text, we will omit the superscript when $x=0$.

\subsection{Spacial dependence on offspring distribution and underlying dynamics}
Theorem~\ref{mainthm:law} can be substantially generalized without dramatically changing the proof. Specifically, while we have considered a periodic branching rate, the offspring distribution (in this case, binary branching) and underlying dynamics (Brownian motion) are not spatially dependent. In this subsection, we consider a generalization of our problem where all of these factors are allowed to depend on space in a periodic manner.

Let $\{X_t\}$ be a process under the probability measure $\bP^x$ with dynamics
\begin{equation}\label{dynamics}
    dX_t = \mu(X_t)dt + \sigma(X_t)dW_t,\qquad X_0=x,
\end{equation}
where $\mu$ and $\sigma$ are $1$-periodic $C^1$ functions, with $\sigma>0$, and $\{W_t\}$ is a standard Brownian motion. Additionally, let $g$ be a positive, $1$-periodic $C^1$ function, and for each $x\in\R$, let $\pi(x)$ be a distribution on $\N$ such that $\pi_0(x)=\pi_1(x)=0$, its first and second moments $\rho(x)$ and $\kappa(x)$ are bounded functions of $x$, and $x\mapsto \pi_n(x)$ is a $1$-periodic $C^1$ function for each $n$.

Under the measure $\P^x$, let
$\{X_t^{(v)}\}_{t\ge0, v\in\cN_t}$ be a Branching process with branching rate $g(x)$, offspring distribution $\pi(x)$, and underlying dynamics given by \eqref{dynamics}. As before, we define
\[
M_t:=\max_{v\in\cN_t}X_t^{(v)}.
\]
The statement of our extension is similar to Theorem $1$, but we will need a new eigenvalue equation to find $v^*$ and $\lambda^*$. Specifically, let $\gamma(\lambda)$ and $\psi(\cdot,\lambda)$ be the respective principal eigenvalue and positive eigenfunction of the equation
\begin{equation}\label{eigen2}
\begin{array}{rcl}
\frac{\sigma^2(x)}2\psi_{xx}+\Big(\mu(x) + \lambda \sigma^2(x)\Big)\psi_x+\left(\lambda\mu(x) + \frac{\lambda^2\sigma^2(x)}2 + \big(\rho(x)-1\big)g(x)\right)\psi &=& \displaystyle \gamma(\lambda)\psi\,,\\[4pt]
\displaystyle\psi(x+1,\lambda) &=& \displaystyle\psi(x,\lambda)\,,
\end{array}\end{equation}
normalized so $\int_0^1\psi(x,\lambda)dx = 1$.

By integrating \eqref{eigen2} over a single period, it is clear that 
\[
\gamma(\lambda) \in \left[\frac{\lambda^2}2 + \alpha - \lambda \|\mu\|_\infty, \frac{\lambda^2}2 + \beta + \lambda\|\mu\|_\infty\right],
\]
where $\alpha$ and $\beta$ are the respective minimum and maximum of the function $x\mapsto \big(\rho(x)-1\big)g(x)$. It thus follows that $\frac{\gamma(\lambda)}{\lambda}\to+\infty$ both as $\lambda\to 0^+$ and as $\lambda\to +\infty$, and hence
\[
v^* = \min_{\lambda>0}\frac{\gamma(\lambda)}{\lambda} = \frac{\gamma(\lambda^*)}{\lambda^*}
\]
exists. (Note that the uniqueness of $\lambda^*$ follows, as in Lemma~\ref{gamma-props}, from the analyticity of $\gamma$.) As in our main text, we then define $m_t=v^*t - \frac3{2\lambda^*}\log t$.

The analogue of Theorem~\ref{mainthm:law} holds in this setting provided $v^*>0$. Explicitly, we have the following.

\begin{theorem}\label{extthm}
Assume $v^*>0$. Then there exist a random variable $\Theta$ and a positive, continuous, $1$-periodic function $\nu$ such that
\[
\lim_{t\to\infty}\left|\P\Big(M_t\le m_t+y\Big)-\E\left[\exp\big(-\Theta \nu(m_t+y)e^{-\lambda^* y}\big)\right]\right|=0
\]
for all $y\in\R$.
\end{theorem}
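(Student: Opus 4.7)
The plan is to imitate the proof of Theorem \ref{mainthm:law} line by line, replacing each ingredient by its analogue in the new setting, and to verify that the two features specific to the generalization---a position-dependent diffusion/drift and a position-dependent offspring law---do not affect the core estimates. First I would establish the appropriate Many-to-One and Many-to-Two formulas. The Many-to-One factor becomes $\exp(\int_0^\tau (\rho(X_s)-1)g(X_s)\,ds)$ along the diffusion \eqref{dynamics}, reflecting that each branching event produces $\rho$ offspring on average. The Many-to-Two analogue has the constant $2$ in \eqref{mtt:constant}--\eqref{mtt:hitting} replaced by $\kappa^{(2)}(x):=\sum_{n}n(n-1)\pi_n(x)$, which is bounded in $x$ by hypothesis and hence absorbed into a universal constant wherever these lemmas are applied.

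Next I would construct the tilt. Set $\phi(x,\lambda):=\lambda+\psi_x(x,\lambda)/\psi(x,\lambda)$ with $\psi$ from \eqref{eigen2}, and let $\{Y_t\}$ solve
\begin{equation*}
dY_t=\bigl[\mu(Y_t)+\sigma^2(Y_t)\phi(Y_t,\lambda)\bigr]\,dt+\sigma(Y_t)\,dW_t.
\end{equation*}
Applying Ito's formula to $\lambda x+\log\psi(x,\lambda)$ and plugging in \eqref{eigen2} yields, exactly as in Lemma \ref{tilt}, the Girsanov identity
\begin{equation*}
\bE_{\lambda}^{x}\bigl[F(\{Y_s\}_{s\le\tau})\bigr]=\bE^{x}\!\left[\frac{\psi(X_\tau,\lambda)}{\psi(x,\lambda)}\,e^{\lambda(X_\tau-x)+\int_0^\tau(\rho(X_s)-1)g(X_s)\,ds-\gamma(\lambda)\tau}F(\{X_s\}_{s\le\tau})\right],
\end{equation*}
so that the tilt cancels the Many-to-One weight up to the bounded $\psi$-ratio. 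Periodicity of $\mu,\sigma,g,\rho$ passes to $\phi(\cdot,\lambda)$, hence the law of $Y_t-Y_0$ under $\bP_\lambda^x$ depends on $x$ only modulo $1$. The analyticity/convexity argument of Lemma \ref{gamma-props} then gives uniqueness of $\lambda^*$, the coercivity coming here from the period-integration bound on $\gamma$ given in the statement preceding Theorem \ref{extthm}; positivity of $v^*$ is an assumption.

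With these tools in place, the renewal scheme via hitting times $T_k$ and centred increments $S_k=T_k-k/v^*$ of \eqref{Tdef}--\eqref{Sdef} transfers verbatim, as it depends only on the strong Markov property, $1$-periodicity of the law of $Y_t-Y_0$, and exponential tails of $T_1$ (which follow from the LDP of Lemma \ref{ldp}, itself a consequence of the limit of $t^{-1}\log\bE_\lambda^x[e^{\eta Y_t}]$ and the boundedness of $\psi$). The ballot estimates of Lemmas \ref{ballot:Sk} and \ref{ballot} go through since they only require exponential moments of $S_1$ plus diffusive tail bounds for the SDE---and the latter follow from $\sigma$ being bounded above and away from zero. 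The upper and lower tail bounds of Section \ref{sec-righttail} then yield Proposition \ref{righttail} in the new setting with only cosmetic changes: the bounded factors $\rho-1$, $\kappa^{(2)}$ and $\sigma$ are absorbed into constants, and the function $\nu$ is constructed by the same double-limit argument.

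Finally, the cutting argument in the proof of Theorem \ref{mainthm:law} applies unchanged: the quantities $\Gamma_s,\Theta_s$ remain well-defined since $\psi(\cdot,\lambda^*)$ is bounded above and below, the event $A_s$ of \eqref{eq-As} still has vanishing complementary probability by the lower bound of Corollary \ref{bounds:cor}, and the convergence $\Theta_s\one_{A_s}\Rightarrow\Theta$ follows from the same Laplace-transform/monotonicity argument. I expect the main technical obstacle to be verifying that $\psi(\cdot,\lambda)$ from the generalized eigenproblem \eqref{eigen2} retains all the qualitative features silently used throughout for the Brownian case: positivity, $C^1$ regularity in $x$, analyticity in $\lambda$, and uniform upper and lower bounds on compact $\lambda$-intervals. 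These are standard consequences of Krein--Rutman for the positive periodic operator in \eqref{eigen2} together with analytic perturbation theory, but each invocation of ``$\psi$ is bounded'' in the main text must be re-justified in this more general setting.
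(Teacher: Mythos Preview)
Your proposal is correct and follows essentially the same outline as the paper's own sketch: establish the generalized Many-to-One/Many-to-Two formulas with weights $(\rho-1)g$ and $\kappa-\rho$, build the tilt from $\phi=\lambda+\psi_x/\psi$ associated to the eigenproblem \eqref{eigen2}, derive the LDP and renewal structure via the hitting times $T_k$, and then rerun Sections \ref{sec-righttail}--\ref{sec:mainresults} with only cosmetic changes. Your added remark about re-verifying the qualitative properties of $\psi$ (positivity, regularity, boundedness) via Krein--Rutman and analytic perturbation theory is a sensible caveat that the paper leaves implicit.
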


Let us briefly state how this would be proved. The Many-to-One and Many-to-Two lemmas in this case say
\[
\E^x\left[\sum_{v\in\cN_t}f\big(\{X^{(v)}_s\}_{0\le s\le t}\big)\right] = \bE^x\left[\exp\left\{\int_0^t\big(\rho(X_s)-1\big)g(X_s)ds\right\}f\big(\{X_s^{(v)}\}_{0\le s\le t}\big)\right]
\]
and
\begin{multline*}
    \E^x\left[\sum_{{\substack{v,w\in\cN_t \\ v\neq w}}} \one_{\{X_s^{(v)},X_s^{(w)}\in I_s \text{ for }s\le t\}}\right]
    = \int_0^t\int ds \, \E^x\left[\sum_{v\in\cN_s}\one_{\{X_u^{(v)} \in I_u \text{ for }u\le s,\,X_s^{(v)} \in dy\}}\right] \\
    \cdot \big(\kappa(y)-\rho(y)\big)g(y) \left(\E^y\left[\sum_{v\in\cN_{t-s}}\one_{\{X_u^{(v)}\in I_{s+u} \text{ for }u\le t-s\}}\right]\right)^2,
\end{multline*}
with analogous statments holding on stopping lines.

As we did earlier, we then define
\[
\phi(x,\lambda) := \lambda + \frac{\psi_x(x,\lambda)}{\psi(x,\lambda)}.
\]
We wish to change measure analogously to Lemma~\ref{tilt}, but since $\{X_t\}$ is not necessarily a martingale, we must be slightly careful. First, note that for fixed $\lambda$,
\[
Z_t:=\int_0^t\phi(X_s,\lambda)dX_s - \int_0^t\phi(X_s,\lambda)\mu(X_s)ds
\]
is a martingale, and its quadratic variation is
\[
[Z]_t=\int_0^t \phi^2(X_s,\lambda)\sigma^2(X_s)ds.
\]
Since $\phi$ and $\sigma$ are both bounded, $e^{[Z]_t/2}<\infty$ for all $t>0$, and so $\{Z_t\}$ satisfies Novikov's condition. Thus, its Dol\'eans-Dade exponential
\[
\mathcal E(Z)_t:=\exp\Big(Z_t-\frac12 Z_t\Big)
\]
is a positive martingale. Define the measure $\bP^x_\lambda$ by
\[
\frac{d\bP_{\lambda}^x}{d\bP^x}\Big|_{\cF_t}:=\mathcal E(Z)_t,
\]
where $\{\cF_t\}$ is the natural filtration of $\{X_t\}$. Then by Girsanov's theorem, under $\bP^x_{\lambda}$, $\{X_t\}$ has dynamics
\[
dX_t = \Big(\mu(X_t) + \phi(X_t,\lambda)\sigma(X_t)\Big)dt + \sigma^2(X_t)dW_t,\qquad X_0 = x,
\]
where we also used
\[
[X,Z]_t = \int_0^t\phi(X_s,\lambda)\sigma(X_s)ds.
\]
Moreover, one can show by \eqref{eigen2} and the definition of $\phi$ that
\[
\frac{\sigma^2(x)}2\Big(\phi_x + \phi^2\Big) + \mu(x)\phi = \gamma(\lambda) - \big(\rho(x)-1\big)g(x),
\]
from which it follows, by the same argument as Lemma~\ref{tilt}, that
\[
\mathcal E(Z)_t = \frac{\psi(X_t,\lambda)}{\psi(X_0,\lambda)}\exp\left\{\lambda (X_t-X_0) + \int_0^t \big(\rho(X_s)-1\big)g(X_s)ds - \gamma(\lambda)t\right\}.
\]
Now, one can once again show that, under $\{\bP^x_{\lambda^*}\}$, $\{X_t/t\}$ satisfies a large deviations principle with good, convex rate function
\[
I(z):=\gamma^*(z) - \big\{\lambda^* z - \gamma(\lambda^*) \big\} = \sup_{\eta\in\R}\Big(\big\{\eta z - \gamma(\eta)\big\} - \big\{\lambda^* z - \gamma(\lambda^*)\big\}\Big),
\]
and so in particular, $I(z)=0$ if and only if $z=v^*$. One can then show $X_t/t\to v^*$, $\bP^x$-almost surely. From here, the same stopping time argument will then work, since if $T_k$ is the first time for $\{X_t\}$ to reach $k$, then under $\bP^0_{\lambda^*}$, $\{T_k\}$ is a sum of IIDs with exponential moments in a neighborhood of the origin.

We remark that we believe this argument should be able to be adapted to the case $v^*<0$. In this case, rather than the stopping times $\{T_k\}$, one considers $\{\widetilde T_k\}$, where $\widetilde T_k$ is the first time that $\{X_t\}$ hits $-k$. While many of the preliminary estimates will be the same, the stopping line argument in Section $3.2$ must be changed -- the majority of particles will hit $-k$ {\textit {before}} the maximum does, and so one would need to instead consider $\widetilde T_k^{(s)}$, the first time {\textit{after $s$}} at which $\{X_t\}$ hits $-k$. Of course, this is no longer a sum of IIDs, so after applying the Many-to-Few lemma and the change of measure, one must decompose events based on whether or not $\widetilde T_k^{(s)} = \widetilde T_k$. We leave the details to the reader.

We also note that the $v^*=0$ case is a peculiarity, because now this method will certainly not work, but we have no reason to believe the result will not be true. Indeed, from the outset, we could have chosen to approach the problem by developing barrier estimates on $\{X_t\}$ (in the main text, $\{Y_t\}$) under $\bP_{\lambda^*}^x$ directly. First, one would need a local central limit theorem to estimate
\[
\bP^x_{\lambda^*}\Big(X_t-x-v^*t\in[z,z+a)\Big),
\]
which can be done with mostly functional analytic methods. This is followed by two local central limit theorems with barrier to estimate
\[
\bP^x_{\lambda^*}\Big(y-(X_t-x-v^*t) \in [z,z+a),\,\min_{s\le t}\big(y-(X_s-x-v^*s)\big)\ge0 \Big),
\]
first for $z$ of order $\sqrt{t}$, then for $z$ in a compact set; see \cite{GLL} for a version of these arguments for an additive functional of a finite Markov chain. Note that, since the drift and volatility of $\{X_t\}$ are spatially-dependent, one should expect an additional term in each estimate corresponding to the invariant distribution of the fractional part of $X_t$, which will not be uniform.

We end this subsection by stating the analogue of Corollary~\ref{maincor:pulsating} under this setting.

\begin{corollary}
	Let $u(t,x)$ be the solution to the equation
	\begin{equation}\label{eq:kpp-periodic1}
	\begin{array}{rcl}
	\displaystyle \frac{\partial u}{\partial t} &=& \displaystyle\frac{\sigma^2(x)}2\frac{\partial^2u}{\partial x^2} + \mu(x)\frac{\partial u}{\partial x} +  F(x,u) \,,\quad \\[8pt]
	\displaystyle u(0,x) &=& \displaystyle \one_{\{x\geq 0\}} \,,\quad
	\end{array}
	\end{equation}
	where
	\[
	F(x,u) := g(x)\left(\sum_{k=2}^\infty \nu_k(x)u^k-u\right),
	\]
	and let $m_t=v^* t -\frac3{2\lambda^*}\log t$. There exists a function $U(z,x)$, satisfying $U(z+1/v^*,x)=U(z,x-1)$, such that
	\[ \lim_{t\to\infty} \sup_{x\in\R} \left| u(t,x) - U\big(m_t/v^*,x\big) \right| = 0\,.\]
\end{corollary}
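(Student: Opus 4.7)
The plan is to mirror the proof of Corollary~\ref{maincor:pulsating}, using the generalized Theorem~\ref{extthm} in place of Theorem~\ref{mainthm:law}. First, I would invoke the Feynman--Kac / McKean representation in this extended setting: since $\pi_0(x) = \pi_1(x) = 0$, the offspring generating function $h_x(s) := \sum_{k\geq 2} \nu_k(x) s^k$ satisfies $g(x)(h_x(u)-u) = F(x,u)$, and a standard differentiation argument using the Markov property and the generator $\mathcal L := \tfrac12\sigma^2\partial_x^2 + \mu\partial_x$ shows that
\[
u(t,x) = \E^x\Bigl[\prod_{v\in\cN_t} \one_{\{X_t^{(v)}\geq 0\}}\Bigr] = \P^x\Bigl(\min_{v\in\cN_t} X_t^{(v)} \geq 0\Bigr) = \P^x\Bigl(\max_{v\in\cN_t}(-X_t^{(v)}) \leq 0\Bigr)
\]
solves~\eqref{eq:kpp-periodic1} with the stated Heaviside initial data.

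Second, observe that $\{-X_t^{(v)}\}_{t\geq 0, v\in\cN_t}$ is again a branching diffusion of the form covered by Theorem~\ref{extthm}, with reflected coefficients $\tilde\mu(y) = -\mu(-y)$, $\tilde\sigma(y) = \sigma(-y)$, $\tilde g(y) = g(-y)$, and offspring weights $\tilde\nu_k(y) = \nu_k(-y)$; these remain positive, $1$-periodic and $C^1$, and all moment assumptions carry over. Let $\tilde v^*$, $\tilde\lambda^*$, $\tilde\Theta$, $\tilde\nu$ denote the corresponding quantities produced by Theorem~\ref{extthm} for this reflected branching diffusion; these $\tilde v^*,\tilde\lambda^*$ are the $v^*,\lambda^*$ used to define $m_t$ in the statement of the corollary, since it is the reflected problem that governs the rightward propagation of the F-KPP front.

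The central step is to apply the uniform-in-$(x,y)$ sharpening of Theorem~\ref{extthm} analogous to~\eqref{eq:general}. This carries over to the generalized setting by a verbatim adaptation of the cutting argument from Section~\ref{sec:mainresults}, since that argument uses only the sharp tail estimate (the analogue of Proposition~\ref{righttail}), a Taylor expansion of $\log(1+z)$, and the branching Markov property. Writing $\P^a_Y$ for the law of the reflected branching process started at $a$, we have $u(t,x) = \P^{-x}_Y\bigl(\max_v Y_t^{(v)} \leq 0\bigr)$; applying \eqref{eq:general} with $a = -x$ and $y = x - m_t$, the event $\max_v Y_t^{(v)} - (-x) \leq m_t + y$ becomes exactly $\max_v Y_t^{(v)} \leq 0$, and the argument of $\tilde\nu$ collapses to $0$, yielding
\[
u(t,x) = \E^{-x}\Bigl[\exp\bigl(-\tilde\Theta\,\tilde\nu(0)\,e^{-\tilde\lambda^*(x - m_t)}\bigr)\Bigr] + o(1)
\]
uniformly in $x$. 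This motivates the definition
\[
U(z,x) := \E^{-x}\Bigl[\exp\bigl(-\tilde\Theta\,\tilde\nu(0)\,e^{-\tilde\lambda^*(x - \tilde v^* z)}\bigr)\Bigr],
\]
which gives $\lim_{t\to\infty}\sup_x|u(t,x) - U(m_t/\tilde v^*, x)| = 0$.

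Finally, the pulsating identity $U(z + 1/\tilde v^*, x) = U(z, x-1)$ reduces to verifying that $\tilde\Theta$ has the same law under $\P^{-x}$ as under $\P^{-(x-1)}$; this is a direct consequence of the $1$-periodicity of all of $\mu,\sigma,g,\nu_k$, exactly as observed at the end of the proof of Corollary~\ref{maincor:pulsating}. The main obstacle is really bookkeeping rather than conceptual: one must check that every ingredient of Section~\ref{sec:mainresults}, and in particular the uniform convergence statement~\eqref{eq:general}, transcribes cleanly to the reflected process under the weaker hypotheses of Section~\ref{sec:extensions} (where the tilt of Lemma~\ref{tilt} is replaced by a Girsanov change of measure driven by $\phi(x,\lambda)$ and where the ballot estimates apply to the hitting times of the spatially inhomogeneous diffusion); but these adaptations have already been outlined in the text.
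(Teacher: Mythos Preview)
Your proposal is correct and follows exactly the approach the paper intends: this corollary is stated without proof as the direct analogue of Corollary~\ref{maincor:pulsating}, and you mirror that argument, invoking Theorem~\ref{extthm} and the uniform form~\eqref{eq:general} in place of Theorem~\ref{mainthm:law}. Your remark that the $v^*,\lambda^*$ defining $m_t$ must be read as those of the \emph{reflected} process is a correct and necessary clarification that the paper leaves implicit (in the basic setting of Corollary~\ref{maincor:pulsating} the two coincide, as noted there, but with a nontrivial drift $\mu$ they need not).
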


\subsection{Branching Random Walk}
Finally, we consider a discrete analogue of Theorem~\ref{extthm}. Note that the concept of a branching rate here is not necessary -- any step where a particle does not branch is indistinguishable from the particle having exactly one child. We also restrict our attention to the case where the underlying dynamics consist of particles which either do not move, or move one position to the left or right. This is so that, like in the continuous case, a particle must hit $(n-1)L$ before it hits $nL$, allowing us to use our stopping time analysis.

Consider a Markov chain $\{X_n\}$ on $\Z$ under the probability measure $\bP^x$, with initial position $X(0)=x$ and kernel $p$ satisfying, for all $x,y\in\Z$, $p(x+L,y+L)=p(x,y)$ for some integer $L$ and $p(x,y)=0$ if $|x-y|>1$. For each $x\in\R$, let $\pi(x)$ be a distribution on $\N$ such that $\pi_0(x)=0$, its first and second moments $\rho(x)$ and $\kappa(x)$ are bounded functions of $x$ which are greater than $1$, and $x\mapsto \pi_n(x)$ is an $L$-periodic function for each $n$.

We then consider the following Branching Random Walk under the measure $\P^x$: we begin with a single particle at position $x\in\Z$; at generation $n$, we have a collection of particles $V_n$, where $v\in V_n$ has position $X_n^{(v)}$, and then at step $n+1$ gives birth to a random number of particles given by the distribution $\pi(X_n^{(v)})$, and each of these particles then makes a jump according to the kernel $p(X_n^{(v)},\cdot)$, independently of all other particles in generation $V_{n+1}$. As in the case of Branching Brownian Motion, we are interested in the maximum
\[
M_n:=\max_{v\in V_n}X_n^{(v)}
\]
at time $n$. We anticipate the maximum to be located near $c_1n-c_2\log n$ for some constants $c_1,c_2$. To identify the speed and logarithmic correction, we once again require an eigenvalue equation. In this case, let $\cX:=\big\{h\in\R^\Z:h(\cdot+L)=h\big\}$ and consider the operator $Q_\lambda$ on $\cX$ defined by
\[
Q_\lambda h(x):=\bE^x\left[e^{\lambda (X_n-x)}h(X_n)\prod_{i=0}^{n-1}\rho(X_i)\right].
\]
Note that $Q_\lambda^L$ is a strictly positive operator on the finite dimensional space $\cX$, so by the Perron-Frobenius theorem it has a principal eigenvalue $R(\lambda)$ and positive eigenvector $\psi(\cdot,\lambda)$, normalized so that $\sum_{x=1}^L\psi(x,\lambda)=1$. Once again, by \cite{Kato}, these are analytic functions of $\lambda$.

Let $\gamma(\lambda):=\log R(\lambda)$, and \textit{assume}
\[
v^*:=\min_{\lambda>0}\frac{\gamma(\lambda)}{\lambda} = \frac{\gamma(\lambda^*)}{\lambda^*}
\]
exists. Unlike the continuous case, the existence of a minimizer is not necessarily true; indeed, the case with binary branching and $p(x,y)=\frac12\one_{\{|x-y|=1\}}$ has no minimizer. (Roughly speaking, the minimizer will exist provided the branching is sufficiently slow.) The uniqueness of $\lambda^*$ is automatic, however; as in Lemma~\ref{gamma-props}, this follows from the analyticity of $\gamma$.

With $A_n:= -m_n + \Z$, our main result is then the following.
\begin{theorem}\label{discrete}
Assume $v^*>0$. There exists a random variable $\Theta$ and a positive, $L$-periodic function $\nu$ on $\Z$ such that
\begin{equation}\label{conv:brw}
\lim_{n\to\infty}\sup_{y\in A_n}\left|\P\Big(M_n-m_n\le y\Big)-\E\left[\exp\left\{-\Theta \nu(m_n+y)e^{-\lambda^*y}\right\}\right]\right|=0.
\end{equation}
\end{theorem}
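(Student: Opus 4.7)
The plan is to transport the continuous-time argument of Sections~\ref{sec:prelims_ldp}--\ref{sec:mainresults} essentially verbatim into the discrete setting, with the nearest-neighbor assumption $p(x,y)=0$ for $|x-y|>1$ playing the role of path-continuity of $\{B_t\}$: it guarantees that the hitting times $T_k := \inf\{n : X_n\ge kL\}$ satisfy $X_{T_k}=kL$ exactly, which is what underlies the whole renewal/stopping-line approach.

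\textbf{Tilt and LDP.} First I would note that the one-step operator $Q_\lambda$ preserves $\cX$ (by the $L$-periodicity of $p$ and $\rho$), so $\psi(\cdot,\lambda)$ is its positive Perron eigenfunction, with one-step eigenvalue $r(\lambda)$ satisfying $r(\lambda)^L=R(\lambda)$. The analogue of Lemma~\ref{tilt} is then the martingale change of measure
\[
\frac{d\bP_\lambda^x}{d\bP^x}\bigg|_{\sigma(X_0,\ldots,X_n)}
= \frac{\psi(X_n,\lambda)}{\psi(x,\lambda)}\,e^{\lambda(X_n-x)}\,r(\lambda)^{-n}\prod_{i=0}^{n-1}\rho(X_i),
\]
under which $\{X_n\}$ is a Markov chain with an $L$-periodic kernel. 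Combined with the discrete Many-to-One/Two lemmas (sums over $V_n$; the product $\prod\rho(X_i)$ replaces $e^{\int g(B_s)ds}$; the branching variance $\kappa-\rho$ replaces $g$ in the two-particle formula), a G\"artner--Ellis argument exactly as in Lemma~\ref{ldp} gives an LDP and LLN for $X_n/n$ under $\bP_{\lambda^*}^x$ concentrating at the speed dictated by $\gamma'(\lambda^*)$.

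\textbf{Renewal, ballot, right tail.} By the nearest-neighbor property $X_{T_k}=kL$, and by the joint $L$-periodicity of $p$, $\pi$, and the tilt, the strong Markov property yields that $\{T_{k+1}-T_k\}_{k\ge 0}$ is i.i.d.\ under $\bP_{\lambda^*}^0$ with exponential moments in a neighborhood of $0$ (via the LDP bound on $\bP_{\lambda^*}^0(T_1>n)$, as in the discussion preceding~\eqref{Sdef}). The centered walk $S_k := T_k-k\bE_{\lambda^*}^0 T_1$ is then exactly the kind of walk to which the proof of Lemma~\ref{ballot:Sk} applies; its lattice-valuedness is immaterial since the estimates involve supports of width $a$ and rely on standard local-CLT inputs that survive in the lattice case. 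With the ballot estimates in hand, Section~\ref{sec-righttail} carries through: the upper bound (Lemma~\ref{upperbd:stop}, Corollary~\ref{upperbd:cor}) uses only Many-to-One plus Lemma~\ref{ballot:Sk}; the lower bound (Lemma~\ref{lem:lowerbd}) is a Many-to-Two second moment; and the exact asymptotic (Proposition~\ref{righttail}) compares $\Lambda_{n,y}$, $\Delta_{n,y}$, $\Xi_{n,y}$ via~\eqref{eq:ballot-Sk-limit}. The only structural change is that the continuous residue parameter $p\in[0,1)$ is replaced by $p\in\{0,1,\ldots,L-1\}$, so the limiting function $\nu$ is $L$-periodic on $\Z$, consistent with the restriction $y\in A_n=-m_n+\Z$ in the statement. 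The cutting argument of Section~\ref{sec:mainresults}, applied to $\Theta_s := \sum_{v\in V_s}\psi(X_s^{(v)},\lambda^*)H_s^{(v)}e^{-\lambda^* H_s^{(v)}}$, then produces the random variable $\Theta$ and delivers~\eqref{conv:brw}.

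\textbf{Main obstacle.} The single load-bearing hypothesis beyond $v^*>0$ (which guarantees $\bE_{\lambda^*}^0 T_1<\infty$ and $\bP_{\lambda^*}^0(T_k<\infty)=1$) is nearest-neighbor: without the exact identity $X_{T_k}=kL$, the increments $T_{k+1}-T_k$ would fail to be i.i.d.\ under the periodic tilt, and the renewal structure that drives Lemma~\ref{ballot:Sk} and everything downstream would collapse. Given the hypothesis, no conceptually new ingredient beyond the continuous proof is required, and the remaining work is the careful reverification of the local-CLT estimates underlying the ballot lemma in the lattice setting.
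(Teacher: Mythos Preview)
Your approach is the paper's approach: discrete Many-to-One/Two, the Perron tilt with kernel $p_\lambda$, the LDP/LLN under $\bP_{\lambda^*}$, the renewal walk $S_k=T_k-kL/v^*$ via $X_{T_k}=kL$, then the barrier/ballot estimates feeding into the right-tail proposition and finally the cutting argument. On that level there is nothing to add.

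There is one point you dismiss that the paper singles out as the \emph{only} genuine modification, and your sentence ``its lattice-valuedness is immaterial since the estimates involve supports of width $a$'' is not right. The increments $T_k-T_{k-1}$ are integer-valued, so the walk $S_k$ is lattice. For the crude two-sided bounds \eqref{eq:ballot-Sk-upper}--\eqref{eq:ballot-Sk-lower} this is indeed harmless, but the \emph{exact} limit \eqref{eq:ballot-Sk-limit}, which is what drives the analogue of Proposition~\ref{righttail} and ultimately produces $\nu$, changes form in the lattice case (the integral becomes a sum over the lattice, and the limit only exists along the correct residue class). Concretely, the paper notes that in the definition of $D_{t,y}^{(v)}$ one must replace $y$ by a nearby $y'$ so that $\lambda^* y' + W_N^{(N,v)}$ lands on the lattice, and then invoke the lattice version of Lemma~\ref{ballot:Sk} (as in \cite[\S5]{BDZ}); this is also why the statement restricts to $y\in A_n$ rather than all $y\in\R$. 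Your closing remark about ``reverification of the local-CLT estimates'' is therefore not just bookkeeping: it is exactly where the argument departs from the continuous case, and the right-tail analogue of Proposition~\ref{righttail} must be stated for $y_n\in A_n$ from the outset.
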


The proof follows the same basic outline as that of Theorem~\ref{mainthm:law}. Much like in the previous subsection, the Many-to-Few lemmas and change of measure require more explanation. First, note that the Many-to-One and Many-to-Two lemmas in this context are
\[
\E^x\left[\sum_{v\in V_n}f\big(\{X^{(v)}_i\}_{0\le i\le n}\big)\right] = \bE^x\left[f\big(\{X_i^{(v)}\}_{0\le i\le n}\big)\prod_{i=0}^{n-1}\rho(X_i)\right]
\]
and
\begin{multline*}
    \E^x\left[\sum_{{\substack{v,w\in V_n \\ v\neq w}}} \one_{\{X_j^{(v)},X_j^{(w)}\in I_j \text{ for }j\le n\}}\right]
    = \sum_{k=0}^{n-1}\sum_{y\in\Z} \E^x\left[\sum_{v\in V_k}\one_{\{X_j^{(v)} \in I_j \text{ for }j< k,\,X_{k-1}^{(v)}=y\}}\right] \\
    \cdot \big(\kappa(y)-\rho(y)\big)\left(\sum_{z\in I_k}p(y,z) \E^y\left[\sum_{v\in V_{n-k}}\one_{\{X_j^{(v)}\in I_{j+k} \text{ for }j\le n-k\}}\right]\right)^2,
\end{multline*}
with analogous statements holding on appropriate stopping lines. Now note that, by definition of $\gamma$ and $\psi$,
\[
p_\lambda(x,y):=\frac{\psi(y,\lambda)}{\psi(x,\lambda)}\rho(x)e^{\lambda (y-x) - \gamma(\lambda)}
\]
is an $L$-periodic kernel on $\Z$ which is positive precisely when $p$ is; if $\bP_\lambda^x$ is the probability measure corresponding to the Markov chain $\{X_n\}$ starting at $x$ and having kernel $p_\lambda$, then
\[
\frac{d\bP_\lambda^x}{d\bP^x}\Big|_{\cF_n} = \frac{\psi(X_n,\lambda)}{\psi(x,\lambda)}\exp\Big\{\lambda(X_n-x) - n\gamma(\lambda)\Big\}\prod_{i=0}^{n-1}\rho(X_i),
\]
where $\cF_n$ is the natural filtration of $\{X_n\}$. Once again, under $\bP^x_{\lambda^*}$, $\{X_n/n\}$ satisfies a large deviations principle with good, convex rate function
\[
I(z):=\gamma^*(z) - \big\{\lambda^* z - \gamma(\lambda^*) \big\} = \sup_{\eta\in\R}\Big(\big\{\eta z - \gamma(\eta)\big\} - \big\{\lambda^* z - \gamma(\lambda^*)\big\}\Big),
\]
which is zero if and only if $z=v^*$. This immediately implies (by the Borel-Cantelli lemma) that $X_n/n\to v^*$, $\bP_{\lambda^*}^x$-almost surely. We then define the stopping times
\[
T_k:=\min\Big\{n\in\N:X_n = kL\Big\},
\]
observing that under $\bP_{\lambda^*}^x$, $\{T_k-T_{k-1}\}_{k\ge2}$ are IID with distribution equal to that of $T_1$ under $\bP_{\lambda^*}$, which has mean $L/v^*$ and possesses finite exponential moments in a neighborhood of the origin.

From here, the proof proceeds much as it does for Theorem~\ref{mainthm:law} or Theorem~\ref{extthm}. However, there is one more important difference: the random walk $S_k:=T_k-kL/v^*$ is lattice. Because of this, we end up with a version of Proposition~\ref{righttail} where $y$ is replaced by $y_n$, the unique element of $(y-1,y]\cap A_n$. To prove this, one must replace $k/v^*$ by $kL/v^*$ in the the definition of $W_k^{(N,v)}$, and then in the definition of $D_{t,y}^{(v)}$ replace $y$ by $y'$, with $y-y'$ uniformly bounded, such that $\lambda^*y' + W_N^{(N,v)}$ is integer valued. This allows us to apply the lattice version of Lemma~\ref{ballot:Sk} when estimating $E^x\big[ \Lambda_{t,y}\big]$ -- see \cite[Section $5$]{BDZ} for details.

We finally note that, as in the continuous case, we could have approached this problem directly, rather than using stopping times. We believe this should give superior results, enabling us to handle any $v^*$n, and also allowing us to handle the case where $p$ is not restricted to nearest neighbor jumps. Since one can always write $\{X_n\}$ as an additive functional of a finite Markov chain, the estimates in \cite{GLL} may be used instead of Lemma~\ref{ballot:Sk}, at least if $\{X_n\}$ is non-lattice. We leave this to future work.

\appendix\section{}
In this short appendix, we provide the proof of Lemma~\ref{ballot:Sk}. 
When $x=0$ and $F=\one_{[z,z+a]}$, the lemma
is almost precisely Lemmas $2.2$ and $2.3$ of \cite{BDZ}, with the following minor modifications.
\begin{enumerate}
    \item In \cite{BDZ}, the statements were not in terms of the measure $\bP_{\lambda^*}$ but instead for the measure $\bP_{\lambda_N}$ under which $\bE_{\lambda_N}[S_1^{(N)}]=0$. However, the proof first shows the statements for $\bP_{\lambda^*}$, and then shows that the Radon-Nikodym derivative between the two converges to $1$ as $N\to\infty$.
    \item In \cite{BDZ}, the analogue of \eqref{eq:ballot-Sk-limit} is written less explicitly as a function of $y,z,$ and $a$. However, examining the proof reveals the form of this function as stated.
    \item Because \cite{BDZ} deals also with
    increments whose law does not possess a density, restrictions on the sign of the sequence $d_N$ are imposed there (see the proof of
    \cite[Corollary A.3]{BDZ}, which is a key step in the proof of the lemmas there, for
    where the sign restriction is used explicitly). This is irrelevant in our setup, and hence this restriction is omitted. 
\end{enumerate}

We now proceed to the proof of Lemma~\ref{ballot:Sk},
for $x\neq 0$ or general $F$. Heuristically, when $\{Y_t\}$ reaches $1$ for the first time, it ``resets" and we may apply the result in its original version, with $N-1$ replacing $N$. Given we assume $F$ is quite regular, its appearance does not add much complication to the proofs.

By the strong Markov property, one has
\begin{multline}\label{ballot-Sk:eq1}
    \bE_{\lambda^*}^x\left[y+S_N^{(N)}\in[z,z+a]\,;\,\min_{k\le N}\big(y+S_k^{(N)}\big)\ge0\right] \\
    = \int_0^\infty\bE_{\lambda^*}\left[w+S_{N-1}^{(N)}\in[z,z+a]\,;\,\min_{k\le N-1}\big(w+S_k^{(N)}\big) \ge 0\right]\bP_{\lambda^*}^x\Big(y+S_1^{(N)}\in dw\Big).
\end{multline}
Along with the $x=0$ case and the inequalities
\[
\bE_{\lambda^*}^x\Big[y+S_1^{(N)}\,;\,y+S_1^{(N)} \ge 0\Big] \le y+\bE_{\lambda^*}^x\left|S_1^{(N)}\right| \le y + C
\]
and
\[
\bE_{\lambda^*}^x\Big[y+S_1^{(N)}\,;\,y+S_1^{(N)} \ge 0\Big] \ge y + \bE_{\lambda^*}^x\Big[S_1^{(N)}\Big] \ge y+C,
\]
this implies \eqref{eq:ballot-Sk-upper} and \eqref{eq:ballot-Sk-lower} (taking $z=0$).

Similarly, multiplying \eqref{ballot-Sk:eq1} by $N^{3/2}$, letting $N\to\infty$, and using the bounded convergence theorem, we obtain \eqref{eq:ballot-Sk-limit} for $F=\one_{[z,z+a]}$. To prove this for $F$ satisfying \eqref{F:assm}, observe that for $K\ge1$, if we set $z_{i,K}:=z+ia/K$,
\begin{multline*}
\Bigg|\bE_{\lambda^*}^x\left[F\big(y+ S_N^{(N)}\big)\,;\,\min_{k\le N}\big( y + S_k^{(N)}\big) \ge 0\right]\\
- \sum_{i=1}^KF(z_{i,K})\bE_{\lambda^*}^x\left[y+S_N^{(N)}\in[z_{i,K},z_{i+1,K}]\,;\,\min_{k\le N}\big(y+S_k^{(N)}\big)\ge0\right]\bigg| \le \frac{L}K,
\end{multline*}
where $L$ is the Lipschitz constant of $F$. Multiplying by $N^{3/2}$ and letting $N\to\infty$, we see
\begin{multline*}
    \limsup_{N\to\infty}\Bigg|N^{3/2}\bE_{\lambda^*}^x\left[F\big(y+ S_N^{(N)}\big)\,;\,\min_{k\le N}\big( y + S_k^{(N)}\big) \ge 0\right]\\
    - \beta^*V^x(y)\sum_{i=1}^KF(z_{i,K})\int_{z_{i,K}}^{z_{i+1,K}}V(w)dw\bigg| \le \frac{L}K.
\end{multline*}
However, we also have
\[
\left|\beta^*V^x(y)\sum_{i=1}^KF(z_{i,K})\int_{z_{i,K}}^{z_{i+1,K}}V(w)dw - \beta^*V^x(y)\int_z^{z+a}F(w)V(w)dw\right| \le \frac{L}{K},
\]
and hence
\begin{multline*}
    \limsup_{N\to\infty}\Bigg|N^{3/2}\bE_{\lambda^*}^x\left[F\big(y+ S_N^{(N)}\big)\,;\,\min_{k\le N}\big( y + S_k^{(N)}\big) \ge 0\right]\\
    - \beta^*V(y)\int_z^{z+a}F(w)V(w)dw\bigg| \le \frac{2L}{K}
\end{multline*}
for any $K\ge1$, which implies \eqref{eq:ballot-Sk-limit}.

Repeating the same procedure as above for \eqref{eq:ballot-Sk-limsup} allows us to prove the case $x=0$, $F$ satisfying \eqref{F:assm} from the case $x=0$, $F=\one_{[z,z+a]}$. For general $x$, we use the strong Markov property to write the left hand side of \eqref{eq:ballot-Sk-limsup} as
\begin{multline}\label{ballot-Sk:eq2}
    \int_0^\infty \bE_{\lambda^*}\bigg[F\big(w-y^{1/10}+S_{N-1}^{(N)}\big)\,;\,% \\
    \min_{k\le N-1}\Big(w+S_k^{(N)}+h\big((k+1)\wedge(N-k-1)\big)\Big)\ge0\bigg] \\
    \cdot\bP_{\lambda^*}^x\Big(y+y^{1/10}+S_1^{(N)}\in dw\Big).
\end{multline}
(Note that the integral should really begin from $-h(1)$, not zero, but since $T_1>0$, $\sup_{x\in[0,1)}\bP_{\lambda^*}^x\Big(y+y^{1/10}+S_1^{(N)} \le 0\Big)=0$ for sufficiently large $y$, and thus the integral over this region vanishes.)

Since $h$ is concave and increasing, there exists $M\ge h(1)>0$ such that $h(k+1)-h(k)\le M$ for all $k\ge0$. Letting $c_0:=1+\frac{M}{h(1)}$, one can show that $h(k+1) \le c_0h(k)$ for all $k\ge1$, and hence
\[
h\big((k+1)\wedge(N-k-1)\big) \le h(1) + c_0h\big(k\wedge(N-1-k)\big)
\]
for all $k\le N-1$. Thus, we may bound \eqref{ballot-Sk:eq2} above by
\begin{multline*}
    \int_0^\infty \bE_{\lambda^*}\bigg[F\big(w-y^{1/10}+S_{N-1}^{(N)}\big)\,;\,
    \min_{k\le N-1}\Big(w+S_k^{(N)}+c_0h\big(k\wedge(N-1-k)\big)\Big)\ge0\bigg] \\
    \cdot\bP_{\lambda^*}^x\Big(y+y^{1/10}+S_1^{(N)}\in dw\Big).
\end{multline*}
Multiplying the above display by $N^{3/2}$ and letting $N\to\infty$, we see that the left hand side of \eqref{eq:ballot-Sk-limsup} is no more than
\[
\big(1+\overline\delta_{y\wedge z}\big)\int_0^\infty \bP_{\lambda^*}^x\Big(y+y^{1/10}+S_1\in dw\Big) \beta^*V(w)\int_{z+y^{1/10}}^{z+y^{1/10}+a}F(w'-y^{1/10})V(w')dw',
\]
where $\overline\delta$ is the analogue of $\delta$ when $h$ is replaced by $c_0h$, and this is in turn is bounded above by
\[
\big(1+\overline\delta_{y\wedge z}\big)\big(1+\epsilon_{y,z}\big)\beta^*V(y)\int_z^{z+a}F(w)V(w)dw,
\]
where
\begin{multline*}
\epsilon_{y,z}:=\sup_{w\in[z,z+a],x\in[0,1)}\bigg(\bE_{\lambda^*}^x\Big[V\big(y+y^{1/10}+S_1\big);y+y^{1/10}+S_1\ge-h(1)\Big]V\big(w+y^{1/10}\big)\\
-V(y)V(w)\bigg).
\end{multline*}
Since $V(y)/y\to1$ as $y\to\infty$, it follows that $\epsilon_{y,z}\to0$ as $y\wedge z\to\infty$, and thus \eqref{eq:ballot-Sk-limsup} follows.

Finally, we prove \eqref{eq:ballot-Sk-concave}. For $j=N$, this is a straightforward consequence of \eqref{eq:ballot-Sk-limsup} and \eqref{eq:ballot-Sk-limit}. For $j<N$, define $d_j^{(N)}:=|d_N|+h(N-j)/j$ and $S_k^{j,N}:=S_k+kd_j^{(N)}$. We recall from \cite[equation $(107)$]{BDZ} the inequality
\[
h\big(k\wedge(N-k)\big) \le 2h\big(k\wedge(j-k)\big) + (k/j)h(N-j),
\]
which implies
\begin{multline*}
    \bP_{\lambda^*}^x\left(y+S_j^{(N)} \in [z,z+a],\,\min_{k\le j}\Big(y+S_k^{(N)}+h\big(k\wedge(N-k)\big)\Big)\ge 0\right) \\
    \le \bP_{\lambda^*}^x\bigg(y+S_j^{(N,j)}\in[z+h(N-j),z+h(N-j)+a],\\
    \min_{k\le j}\Big(y+S_k^{(N,j)}+2h\big(k\wedge(j-k)\big)\Big)\ge0\bigg).
\end{multline*}
Since $N/2\le j<N$, one can show $0<d_j^{(N)} \le c_2\log j/j$ for a constant $c_2>0$ which depends on $c_0$ and $c_1$, but not on $j$ or $\{d_N\}$. Thus, \eqref{eq:ballot-Sk-concave} follows by applying the $j=N$ case to the above inequality. This completes the proof.
\qed

\subsection*{Acknowledgment} 
E.L.~was supported in part by NSF grant DMS-1812095.
This project has received funding from the European Research Council (ERC) under the European Union's Horizon 2020 research and innovation programme (grant agreement No.\ 692452), and from a US-Israel BSF grant.

\bibliographystyle{abbrv}
\bibliography{bbm}

\end{document}